\theoremstyle{plain}
\newtheorem{theorem}{Theorem}[section]
\newtheorem{lemma}[theorem]{Lemma}
\newtheorem{observation}[theorem]{Observation}
\newtheorem{corollary}[theorem]{Corollary}
\newtheorem{claim}[theorem]{Claim}
\newtheorem{conjecture}[theorem]{Conjecture}
\theoremstyle{definition}
\newtheorem{definition}[theorem]{Definition}
\newtheorem{prob}[theorem]{Problem}
\theoremstyle{remark}
\newtheorem{example}[theorem]{Example}
\newtheorem{remark}[theorem]{Remark}
\newcommand{\cf}{\mathcal{F}}
\newcommand{\F}{\mathcal{F}}
\newcommand{\I}{\mathcal{I}}
\newcommand{\J}{\mathcal{J}}
\newcommand{\N}{\mathbb{N}}
\newcommand{\C}{\mathcal{C}}
\newcommand{\cb}{\mathcal B}
\newcommand{\cm}{\mathcal M}
\newcommand{\ci}{\mathcal I}
\newcommand{\cc}{\mathcal C}
\newcommand{\cd}{\mathcal D}
\newcommand{\cg}{\mathcal G}
\newcommand{\cx}{\mathcal X}
\newcommand{\cu}{\mathcal U}
\newcommand{\cn}{\mathcal N}
\newcommand{\ct}{\mathcal T}
\newcommand{\ithree}{I_{\geq 3}}
\newcommand{\Cn}{\overrightarrow{C_n}}
\newcommand{\Ck}{\overrightarrow{C_k}}
\newcommand{\Cs}{\overrightarrow{C_s}}
\newcommand{\cxk}{{\mathcal{X}(k)}}
\newcommand{\f}{f'}
\newcommand{\fr}{f^\text{r}}
\title{Rainbow independent sets in certain classes of graphs}
\author{Ron Aharoni}\thanks{The research of the first  author was
supported by  BSF grant no.
2006099, by an ISF grant and by the Discount Bank
Chair at the Technion.}
\address{Department of Mathematics,
Technion\\
Haifa, Israel} \email{raharoni@gmail.com}
\author{Joseph Briggs}
\address{Department of Mathematics,
Technion\\
Haifa, Israel}\email{briggs@campus.technion.ac.il}
\author{Jinha Kim}
\address{Department of Mathematics,
Technion\\
Haifa, Israel}\email{jinhakim@campus.technion.ac.il}
\author{Minki Kim}
\address{Department of Mathematics,
Technion\\
Haifa, Israel}\email{kimminki@campus.technion.ac.il}
\begin{document}

\maketitle
\begin{abstract}
    For a given class $\cc$ of graphs and given integers $m \le n$, let $f_\cc(n,m)$ be the minimal number $k$ such that every $k$ independent $n$-sets in any graph belonging to $\cc$ have a (possibly partial) rainbow independent $m$-set.    
    Motivated by known results on the finiteness and actual value of $f_\cc(n,m)$ when $\cc$ is the class of line graphs of graphs, we study this function for various other classes. 
\end{abstract}
\section{Introduction}
The protagonists of this paper are  {\em rainbow sets}. 
\begin{definition}
Let $\cf=(F_1, \ldots ,F_m)$ be a collection of (not necessarily distinct) sets. A (partial) {\em rainbow set} for $\cf$ is 
the image of a partial choice function.
More formally - it is a
set of the form $R = \{x_{i_1}, x_{i_2},\ldots ,x_{i_k}\} $, where $1 \le i_1<i_2< \ldots <i_k\le m$, and $x_{i_j}\in F_{i_j} ~~(j \le k)$. Here it is assumed that $R$ is a set, namely that the elements $x_{i_j}$ are distinct.
 \end{definition}

An {\em $n$-set} is a set of size $n$. A set of vertices in a graph is called {\em independent} if it does not contain an edge of the graph. The set of independent sets in a graph $G$ is denoted by $\ci(G)$, and
the set of independent $n$-sets is denoted by $\ci_n(G)$. 
The maximal size of an independent set in $G$ is denoted by $\alpha(G)$.

If $K,~ H$ are two graphs, we write $H<K$ if $K$ contains an induced copy of $H$. If $H \not < K$ we say that $K$ is {\em $H$-free}.

If $u,v$ are adjacent in a given graph, we write $u \sim v$.
By $N[v]$  we denote the set $\{v\} \cup \{u \mid u\sim v\}$, and by $N(v)$ the set $\{u \mid u \sim v\}$. 

\vskip\baselineskip
\begin{definition}
Let $H$ be a hypergraph. The {\em line graph}
of $H$, denoted by $L(H)$, has the edges of $H$ as vertices, and two vertices are adjacent if they intersect, as edges of $H$. \end{definition}

 For a graph $G$ and integers $m \le n$, let $f_G(n,m)$ be the minimal number $k$ such that every $k$ independent $n$-sets in $G$ have a partial rainbow independent $m$-set. For a class $\cc$ of  graphs, let $f_\cc(n,m)=\sup\{f_G(n,m) \mid G \in \cc\}$. This can be $\infty$. The aim of this paper is to establish bounds on the values of $f_\cc(n,m)$ for certain classes $\cc$.

In particular, 
we shall consider the following classes:

\begin{enumerate}[1.]
    
     \item $\cu$:  the class of all graphs.
    \item $\cb$:  the class of line graphs of bipartite graphs.
     \item $\cg$:  the class of line graphs of all graphs.
     \item $\cx(k)$: the class of $k$-colourable graphs.
     
     \item $\cd(k)$: the class of  graphs with degrees at most $k$.
     \item $\ct$:  the class of chordal graphs.
     \item $\cf(H)$: the class of $H$-free graphs, for a given graph $H$.
     \item $\cf(H_1, \dots, H_t) = \bigcap_{i\le t} \cf(H_i)$: the class of graphs that are $H_i$-free for all $i\le t$.
\end{enumerate}

Here is a small example, for practice.
\begin{example}
 For every $k$, let $G$ be the complete $k$-partite graph with all sides of size $n$, and let $F_i$ be its respective sides. Then there is no independent rainbow $2$-set, which shows that $f_\cu(n,2)=\infty$ for every $n$.
\end{example} 
Clearly, 
\begin{equation}\label{trivialbound}
   f_\cc(n,m) \ge m
\end{equation}
(provided $\cc$ has at least one graph with an independent $n$-set).
\\

If $\cc \subseteq \cd$ and  $m' \le m \le n \le n'$ then:
\begin{equation}\label{monotonepartial}
   f_\cd(n,m) \ge f_\cc(n',m').
\end{equation}

Here are some of the main results of the paper.

    \begingroup
    \def\thetheorem{\ref{t.forbidden_introduction}}
    
    Let $K_{r}^-$ denote the complete graph on $r$ vertices, with one edge deleted.

    \begin{theorem}
    $f_{\cf(H)}(n,n) < \infty$ for every positive integer $n$ if and only if $H$ is either $K_r$ or $K_{r}^-$ for some $r$.
    \end{theorem}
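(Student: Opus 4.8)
The plan is to first restate the dichotomy in a usable form: $H$ equals $K_r$ or $K_r^-$ for some $r$ precisely when its complement $\overline H$ has at most one edge (no edge gives $K_r$, one edge gives $K_r^-$). So I must show that $f_{\cf(H)}(n,n)$ is finite for every $n$ when $\overline H$ has at most one edge, and infinite for some $n$ when $\overline H$ has at least two edges, and I would treat these two implications separately.

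For sufficiency, the clean case is $H=K_r$, where $\cf(K_r)$ is exactly the class of graphs with $\omega(G)<r$. Here I expect $f_{\cf(K_r)}(n,n)\le R(r,n)$, the Ramsey number (least $N$ forcing either $K_r$ or an independent $n$-set). Given independent $n$-sets $F_1,\dots,F_k$ with $k\ge R(r,n)$, take a maximum \emph{partial rainbow set} $W=\{v_1,\dots,v_p\}$, with $v_t\in F_{i_t}$ and the indices $i_t$ distinct, \emph{not} yet required to be independent. If $p\ge R(r,n)$, then since $G$ is $K_r$-free the $p$ distinct vertices of $W$ contain an independent $n$-set, and as these represent distinct sets it is a rainbow independent $n$-set. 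Otherwise $p<R(r,n)\le k$, so some index $j$ is unused; by maximality $F_j\subseteq W$, and writing $F_j=\{v_{t_1},\dots,v_{t_n}\}$ and reassigning each $v_{t_\ell}$ to the index $i_{t_\ell}$ exhibits the independent set $F_j$ as a rainbow independent $n$-set. The same maximum-rainbow-set skeleton should handle $H=K_r^-$, using that the neighbourhood of every vertex in a $K_r^-$-free graph is again $K_{r-1}^-$-free; the complication is that $K_r^-$-free graphs may contain arbitrarily large cliques, so the representatives can fail to contain any independent $n$-set and the Ramsey step breaks. I would peel off a large clique among the representatives and use the forbidden $K_r^-$ to show each associated set has a further element non-adjacent to almost all of that clique, recovering independence; making this quantitative (probably by induction on $r$) is the delicate point of sufficiency.

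For necessity, assume $\overline H$ has at least two edges. If $\overline H$ is not a matching --- equivalently some vertex of $H$ has two non-neighbours --- then the cocktail-party graph $K_{k\times 2}$ (complete multipartite with $k$ parts of size $2$) is $H$-free, since $K_{k\times 2}$ contains $H$ as an induced subgraph only if $\overline H$ is a matching, which it is not; its $k$ parts are independent $2$-sets and, since distinct parts are completely joined, there is no rainbow independent $2$-set, so $f_{\cf(H)}(2,2)=\infty$. The remaining case is $\overline H$ a matching with at least two edges, so $\overline H\supseteq 2K_2$ and $H$ contains an induced $C_4=K_{2,2}$; then $\cf(C_4)\subseteq\cf(H)$, and it suffices to produce, for some $n$, arbitrarily large families of independent $n$-sets in $C_4$-free graphs with no rainbow independent $n$-set.

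This last construction is the main obstacle. The naive candidates fail for a structural reason worth isolating: in a disjoint union of cliques (the typical $C_4$-free graph) the independent $n$-sets are partial transversals whose support sets have size $n$, so Hall's condition is automatic and any $n$ of the $F_i$ admit a rainbow independent $n$-set; indeed the very re-attribution trick that powered the sufficiency proof also defeats split-graph and sunflower attempts. Blocking therefore requires genuine edges \emph{between} the sets, yet $C_4$-freeness forbids two disjoint non-edges from being completely joined, so these cross-edges must be carefully structured and a moderately large $n$ is presumably needed. I would search for the construction among $C_4$-free incidence/design-type graphs, or an explicit recursive construction, with a deliberately non-transversal independent-set structure, the crux being to certify simultaneously that the graph is $C_4$-free and that no re-attribution produces an independent rainbow transversal.
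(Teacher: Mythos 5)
Your reduction of the dichotomy to ``$\overline H$ has at most one edge,'' your cocktail-party graph $K_{k\times 2}$ for the case where $\overline H$ is not a matching, and your maximal-rainbow-set argument giving $f_{\cf(K_r)}(n,n)\le R(r,n)$ are all correct --- the last coincides with the paper's proof that $f_{\cf(K_r)}(n,m)=R(r,m)$. But the proposal has a genuine gap in each direction, and in both cases what is missing is the actual content of the paper's proof. For necessity, you correctly reduce the remaining case ($\overline H$ a matching with two edges, hence $C_4<H$) to exhibiting arbitrarily many independent $n$-sets in a $C_4$-free graph with no rainbow independent $n$-set, and then stop, explicitly conceding you do not have the construction. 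The paper's graph $G_{t,n}$ supplies it: take a cycle of length $tn$ and join any two vertices at cyclic distance less than $t$. For $n\ge 4$ this is $C_4$-free (an induced $C_4$ would force four consecutive gaps, all of the same sign and each of absolute value less than $t$, summing to a positive multiple of $tn$, impossible since $4(t-1)<nt$); it has exactly $t$ independent $n$-sets, and repeating each $n-1$ times gives $t(n-1)$ colour classes with no rainbow independent $n$-set. Your structural remarks about why cliques-plus-transversals cannot work are sound, but without a construction the ``only if'' direction is unproved.

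For sufficiency, the $K_r^-$ case is only gestured at, and the ``peel off a large clique among the representatives'' sketch does not obviously close: the whole difficulty is that a maximal rainbow set in a $K_r^-$-free graph can be one huge clique, and the unrepresented-class trick then yields nothing. The paper does not attack this head-on; it proves finiteness by two successive reductions --- the Erd\H{o}s--Rado sunflower lemma bounds $f_\cc(n,n)$ in terms of the disjoint version $\f_\cc$, and a Ramsey argument on the $2^{n^2}$ possible adjacency patterns between pairs of columns bounds $\f_\cc$ in terms of the \emph{repeating} version $\fr_\cc$ --- and then computes $\fr_{\cf(K_r^-)}(n,n)=\max(n,r-1)$ by a diagonal argument: in a repeating $K_r^-$-free grid on $A\times[n]$, either some row is independent (and is itself a rainbow $n$-set once $|A|\ge n$), or every row is a clique, in which case the diagonal $\{(a,a)\}$ must be independent, since a single diagonal edge propagates by the repeating property to an induced $K_r^-$. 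Some mechanism of this kind, which first normalises how the colour classes intersect and interact before any clique analysis, appears unavoidable, and it is absent from your proposal; as written, the ``if'' direction is established only for $H=K_r$.
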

    \addtocounter{theorem}{-1}
    \endgroup

	\begingroup
	\def\thetheorem{\ref{t.chordal rb}}
	\begin{theorem}
    If   $m \le n$ then $f_\ct(n,m)=m.$
    \end{theorem}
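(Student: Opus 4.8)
The lower bound $f_\ct(n,m) \ge m$ is immediate from \eqref{trivialbound}, so the plan is to establish the matching upper bound $f_\ct(n,m) \le m$: that any $m$ independent $n$-sets $F_1,\dots,F_m$ in a chordal graph $G$ admit a partial rainbow independent $m$-set. Since there are exactly $m$ sets and we seek $m$ distinct representatives with strictly increasing indices, such a set must use one vertex from each $F_i$; so the goal is precisely to produce an \emph{independent transversal} (an independent set meeting each $F_i$). I would in fact prove the slightly more flexible statement by induction on $m$: if $G$ is chordal and $F_1,\dots,F_m$ are independent sets with $|F_i|\ge m$ for every $i$, then $(F_1,\dots,F_m)$ has an independent transversal. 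Taking $|F_i|=n\ge m$ then yields the theorem.

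The engine of the induction is the simplicial structure of chordal graphs. First I would pass to the induced subgraph on $U=\bigcup_i F_i$, which is again chordal; this ensures every vertex lies in some $F_i$. Since $U\ne\emptyset$, this subgraph has a simplicial vertex $v$ (by Dirac's theorem), whose closed neighbourhood $N[v]$ is a clique. Fix an index $i_0$ with $v\in F_{i_0}$, and set $F_i'=F_i\setminus N[v]$ for $i\neq i_0$. The key observation is that because each $F_i$ is independent while $N[v]$ is a clique, we have $|F_i\cap N[v]|\le 1$, so each surviving set loses at most one vertex: $|F_i'|\ge |F_i|-1\ge m-1$.

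Now I would apply the induction hypothesis to the $m-1$ sets $(F_i')_{i\neq i_0}$ inside the chordal graph $G-N[v]$, obtaining an independent transversal $R'$. Since every element of $R'$ avoids $N[v]$, none is adjacent or equal to $v$, so $R'\cup\{v\}$ is independent; and it selects $v$ from $F_{i_0}$ together with one representative from each remaining $F_i$, hence is the desired transversal. The base case $m=1$ is trivial.

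The delicate points here are bookkeeping rather than conceptual, and are where I would take care. One must restrict to $U$ \emph{before} choosing the simplicial vertex, so that $v$ genuinely represents some colour class; and one must delete the \emph{closed} neighbourhood $N[v]$, not merely $v$, so that the recursively chosen vertices stay non-adjacent to $v$. The quantitative heart is the inequality $|F_i\cap N[v]|\le 1$, which converts the clique neighbourhood of a simplicial vertex into a loss of at most one element per class; this is exactly what keeps the hypothesis $|F_i|\ge m$ self-reproducing down the induction, and is the only place chordality is used.
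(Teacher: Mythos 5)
Your proof is correct and follows essentially the same route as the paper's: both arguments restrict attention to the union of the given independent sets, pick a simplicial vertex $v$ in the resulting chordal graph, delete the clique $N[v]$ (losing at most one vertex from each remaining class), and induct on the number of classes before re-attaching $v$. The only cosmetic difference is the invariant you carry through the induction ($|F_i|\ge m$ rather than the paper's $|I_j'|\ge n-1$), which changes nothing of substance.
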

	\addtocounter{theorem}{-1}
	\endgroup

	\begingroup
	\def\thetheorem{\ref{t.colourable}}
	\begin{theorem}
    If  $m \le n$ then   $f_{\cx(k)}(n,m)=(m-1)k+1.$
    \end{theorem}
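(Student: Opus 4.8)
The plan is to prove the two inequalities $f_{\cx(k)}(n,m)\ge (m-1)k+1$ and $f_{\cx(k)}(n,m)\le (m-1)k+1$ separately, and I would begin by reducing the upper bound to the single case $n=m$. Indeed, by the monotonicity \eqref{monotonepartial} (with $\cc=\cd=\cx(k)$) one has $f_{\cx(k)}(n,m)\le f_{\cx(k)}(m,m)$, so it suffices to bound the value when every prescribed set has size exactly $m$. For the lower bound I would exhibit, for each $n$, a $k$-colourable graph together with $(m-1)k$ independent $n$-sets admitting no rainbow independent $m$-set: take the complete $k$-partite graph with parts $P_1,\dots,P_k$, each $P_i$ an independent set of $n$ vertices, and for each $i$ let $m-1$ of the prescribed sets be the part $P_i$ itself. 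Since across distinct parts the graph is complete, every independent set lies inside a single part $P_i$, so any rainbow independent set draws its representatives from the $m-1$ sets hosted in that one part and hence has size at most $m-1$. This produces $(m-1)k$ independent $n$-sets with no rainbow independent $m$-set, giving $f_{\cx(k)}(n,m)\ge (m-1)k+1$.

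For the upper bound I would argue by induction on $m$ (the case $m=1$ being immediate from \eqref{trivialbound}, and the case $k=1$ following because $m$ sets of size at least $m$ in an edgeless graph satisfy Hall's condition and thus have a full, automatically independent, rainbow $m$-set). Fix a proper colouring of $G\in\cx(k)$ into colour classes, suppose for contradiction that the $N=(m-1)k+1$ given sets have no rainbow independent $m$-set, and use the inductive hypothesis (valid since $N\ge (m-2)k+1$) to produce a rainbow independent $(m-1)$-set $R=\{y_1,\dots,y_{m-1}\}$. Because $R$ cannot be extended, every one of the $N-(m-1)=(m-1)(k-1)+1$ unused sets is contained in $N[R]$; since each such set has size $n\ge m>|R|$, it meets $N(y_i)$ for some anchor $y_i$. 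A pigeonhole over the $m-1$ anchors then yields an anchor $y$ to which at least $k$ unused sets attach. The crucial point is that $G[N(y)]$ is $(k-1)$-colourable, and $k=(2-1)(k-1)+1$ is exactly the threshold for a rainbow independent $2$-set in a $(k-1)$-colourable graph; applying the already-proved $m=2$ instance to the traces of these $k$ sets inside $N(y)$ should yield two non-adjacent representatives $z,z'$, so that $(R\setminus\{y\})\cup\{z,z'\}$ is a rainbow independent $m$-set, the desired contradiction.

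The main obstacle is precisely the distinctness and compatibility bookkeeping hidden in this last step. First, the traces of the unused sets in $N(y)$ may be small or heavily overlapping, so the $m=2$ sub-argument must survive the degenerate situation in which many of them collapse to a common vertex; this is where the hypothesis $n\ge m$ (and the reduction to $n=m$) earns its keep, by guaranteeing enough room to choose distinct representatives. Second, and more seriously, the representatives $z,z'$ must be non-adjacent not only to each other but to all of $R\setminus\{y\}$, whereas a vertex of an unused set lying in $N(y)$ could also be adjacent to another anchor $y_j$. To control this I would take $R$ to be a \emph{maximum} rather than merely maximal rainbow independent set and run an exchange argument: a representative adjacent to some $y_j\ne y$ can be used to re-route or enlarge $R$, so after choosing $R$ extremally one may restrict attention to representatives in $N(y)\setminus N[R\setminus\{y\}]$. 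Making this exchange airtight — equivalently, setting up the right Hall/defect condition on the bipartite incidence between the unused sets and the anchors so that the pigeonholed sets genuinely have compatible representatives — is the step I expect to demand the most care.
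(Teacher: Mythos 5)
Your lower bound is exactly the paper's: the complete $k$-partite graph with parts of size $n$, each part used as $m-1$ of the prescribed sets, and the observation that every independent set lives in a single part. That half is fine.

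The upper bound, however, has a genuine gap, and in fact two. First, after pigeonholing to an anchor $y$ with at least $k$ unused sets meeting $N(y)$, you want to apply the $m=2$ instance (for $(k-1)$-colourable graphs) to the traces $I_j \cap N(y)$. But that instance requires the sets to have size at least $2$, and nothing prevents each trace from being a single vertex --- indeed the same single vertex for all $k$ sets --- since the rest of each $I_j$ can hide in the neighbourhoods of the other anchors. Your remark that ``$n \ge m$ earns its keep'' here is not substantiated: the reduction to $n=m$ says nothing about how much of $I_j$ lands in $N(y)$ as opposed to $N(y_j)$ for $j' \ne i$. Second, even granting two non-adjacent representatives $z,z'$ in $N(y)$, they must also be non-adjacent to every vertex of $R\setminus\{y\}$, and you explicitly leave the exchange argument that would guarantee this as an unresolved step. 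As written, the proof of the upper bound does not close. The paper avoids all of this with a much simpler device: take an inclusion-maximal rainbow set $M$ that is \emph{not} required to be independent. If $M$ represents all $(m-1)k+1$ sets, then $|M|=(m-1)k+1$ and the pigeonhole principle applied to the $k$ colour classes $V_1,\dots,V_k$ (rather than to anchors of a partial rainbow independent set) places $m$ vertices of $M$ in one class $V_j$; these $m$ vertices are independent because $V_j$ is, and they form the desired rainbow independent $m$-set. Otherwise some $I_j$ is unrepresented, and maximality forces $I_j \subseteq M$, so $I_j$ itself is a rainbow independent set of size $n \ge m$. I would recommend replacing your inductive argument by this one; the key idea you are missing is that the auxiliary rainbow set need not be independent, which dissolves both compatibility problems at once.
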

	\addtocounter{theorem}{-1}
	\endgroup

Section \ref{bounded_degrees} is devoted to the class $\cd(k)$, for any integer $k$. 
For this class the values of $f(n,m)$ are only conjectured, and we shall prove only some special cases of the conjecture, as well as a weaker result.

\section{Rainbow matchings in graphs}
Part of the motivation for the study of the functions $f_\cc$ comes from the case  $\cc=\cg$. In this case the independent sets are matchings in graphs. 
A prototypical result  is a theorem of   Drisko \cite{drisko}. In a slightly  generalized form,  proved in \cite{AB},  it states that every $2n-1$ matchings of size $n$ in a bipartite graph have a partial rainbow matching of size $n$. Since an independent set in $L(H)$  is a matching in $H$, this can be stated as:

\begin{theorem}[Drisko]\label{drisko}
$f_\cb(n,n) \le 2n-1$.
\end{theorem}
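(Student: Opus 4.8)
The plan is to work throughout in the language of matchings, using the identification between independent sets in $L(H)$ and matchings in $H$: we are given matchings $M_1,\dots,M_{2n-1}$ of size $n$ in a bipartite graph $H$, and we seek a partial rainbow matching of size $n$, i.e.\ distinct colours $i_1<\dots<i_n$ and edges $e_{i_j}\in M_{i_j}$ whose union is a matching. I would prove this by induction on $n$, the case $n=1$ being immediate since a single edge of $M_1$ is already a rainbow matching of size $1$. The engine of the induction is an \emph{augmentation lemma}: if $R$ is a rainbow matching of size $r$ drawn from some of the $M_i$, and $M,M'$ are two further matchings of \emph{new} colours, each of size at least $r+1$, then $R$ can be enlarged to a rainbow matching of size $r+1$ using only the old colours together with (some of) the colours of $M$ and $M'$. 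Granting the lemma, the theorem follows cleanly: the first $2n-3=2(n-1)-1$ matchings each contain a submatching of size $n-1$, so by induction they carry a rainbow matching $R$ of size $n-1$; the two leftover matchings $M_{2n-2},M_{2n-1}$, each of size $n\ge r+1$, then drive a single augmentation step up to size $n$. The only bookkeeping is that each augmentation consumes exactly two fresh colours, which is precisely what makes $2n-3+2=2n-1$ matchings suffice.

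The content is therefore entirely in the augmentation lemma, and this is the step I expect to be the main obstacle, precisely because of the rainbow constraint. The naive move is to take an $M$-augmenting path $P$ for $R$ inside $R\triangle M$, which exists because $|M|>|R|$; but such a path alternates between $R$-edges and $M$-edges, so augmenting along it would force us to add several edges all of colour $M$ simultaneously, which is illegal. If $P$ happens to be a single $M$-edge disjoint from $R$ we are done, so the real difficulty is when every augmenting path has at least two $M$-edges.

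To handle this I would exploit the second fresh matching $M'$. The idea is to walk along the alternating path $P=m_0\,s_1\,m_1\cdots s_k\,m_k$, where the $s_j$ are $R$-edges (of distinct old colours $c_1,\dots,c_k$) and the $m_j$ are $M$-edges: deleting the $s_j$ frees up those old colours, so one tries to re-insert edges of colours $c_1,\dots,c_k$ elsewhere while keeping only one edge of colour $M$, and to close the deficit at the exposed endpoints of $P$ using an edge of $M'$. The delicate points are that the path genuinely augments (net gain one rather than closing into a useless alternating cycle) and that no colour is repeated after the rearrangement; the size hypotheses $|M|,|M'|\ge r+1$ are exactly what guarantee an exposed endpoint to terminate the alternation. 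This is where the factor two in $2n-1$ is forced, and it cannot be improved: taking the cycle $C_{2n}$ with its two perfect matchings, $n-1$ copies of each give $2n-2$ matchings with no rainbow matching of size $n$, since the only matchings of $C_{2n}$ of size $n$ are monochromatic and no colour is available $n$ times.

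As a fallback I would keep the topological route in reserve: a rainbow independent $n$-set is produced whenever the relevant independence complex of $L(H)$ has sufficiently high topological connectivity, via the topological Hall theorem, and for line graphs of bipartite graphs the matching complex is known to be highly connected. I would turn to this if the alternating-path rearrangement in the augmentation lemma proves too delicate to make watertight by purely combinatorial means.
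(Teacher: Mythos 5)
The paper does not actually prove Theorem \ref{drisko}: it is quoted from \cite{drisko}, with the generalized form attributed to \cite{AB}, so there is no internal proof to compare against and your argument must stand on its own. Your skeleton --- induction on $n$, with an augmentation step that consumes two fresh matchings of size at least $r+1$ to grow a rainbow matching from size $r$ to size $r+1$, giving the count $2n-3+2=2n-1$ --- is exactly the structure of the known combinatorial proofs (it is the engine of \cite{BGS} and of the proof of Theorem \ref{strongerdrisko} in \cite{AKZ}), and the augmentation lemma you isolate is a true statement. Your sharpness example is the paper's Example \ref{driskosharp}. So the architecture is right, and you have correctly located where all the content lives.

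The gap is that you never prove the augmentation lemma, and the strategy you sketch for it does not work as described. Deleting the $R$-edges $s_1,\dots,s_k$ along the $M$-alternating path frees the old colours $c_1,\dots,c_k$, but ``re-inserting edges of colours $c_1,\dots,c_k$ elsewhere'' presupposes control over the matchings of those colours that the lemma does not grant (they are only known to contain the single edges $s_j$), and even if one assumes all matchings have size $n$, placing $k$ new pairwise-disjoint edges of $k$ prescribed colours compatibly with the rest is a problem of the same type and difficulty as the one you started with. The natural repair --- rotate once at the free end of $P$ to get $R'=R-s_1+e_0$, which uses colour $M$ and frees $c_1$, then repeat the argument with $M'$ --- also fails to terminate: after the second rotation you may have consumed both new colours while freeing two old ones, and no further augmentation is available within the allowed colour set. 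Getting past this point requires a genuinely different idea (this is precisely what the lemma in \cite{BGS} supplies), and since by your own account the theorem reduces entirely to this lemma, what you have is a correct plan rather than a proof. Your topological fallback is in fact closer to the route actually taken in \cite{AB}, but it too is only gestured at, so it does not close the gap.
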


In fact, equality holds, by the following example.

\begin{example} \label{driskosharp}
Take the two matchings of size $n$ in the cycle $C_{2n}$, each  repeated $n-1$ times. These are $2n-2$ independent $n$-sets in a line graph, having no rainbow independent $n$-set. 
\end{example}

In \cite{BGS} the following was proved:

\begin{theorem}
$f_\cb(n, n-k) \le \lfloor \frac{k+2}{k+1}n \rfloor -(k+1)$.
\end{theorem}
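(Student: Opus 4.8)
The plan is to argue by contradiction through a \emph{deficiency} version of Drisko's theorem (Theorem~\ref{drisko} is the case $k=0$). Fix a bipartite graph together with matchings $F_1,\dots,F_N$ of size $n$, where $N=\lfloor\frac{k+2}{k+1}n\rfloor-(k+1)$, and let $R$ be a maximum partial rainbow matching, using one edge from each matching in an index set $C$ with $|C|=s$. Suppose toward a contradiction that $s\le n-k-1$, and write the deficiency as $d=n-s\ge k+1$. The whole theorem then reduces to the following estimate, which I would isolate as the key lemma: if the maximum rainbow matching has deficiency $d\ge1$, then
\[
N\le \Big\lfloor \tfrac{d+1}{d}\,n\Big\rfloor-(d+1).
\]

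\textbf{Wrap-up, granting the lemma.} Set $\phi(d):=\lfloor\frac{d+1}{d}n\rfloor-(d+1)$. Since $\frac{d+1}{d}n=n+\frac nd$ and $-(d+1)$ are both non-increasing in $d$, we have $\frac{d+1}{d}\le\frac{k+2}{k+1}$ and $-(d+1)\le-(k+2)$ whenever $d\ge k+1$, so $\phi(d)\le\phi(k+1)=\lfloor\frac{k+2}{k+1}n\rfloor-(k+2)=N-1$. Combining this with the lemma yields $N\le\phi(d)\le N-1$, a contradiction. Hence $s\ge n-k$, which is exactly the assertion $f_\cb(n,n-k)\le N$.

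\textbf{The key lemma via augmenting alternating paths.} Here I would run the Drisko-type augmentation. Maximality of $R$ forces every edge of every unused matching to meet the $2s$ vertices covered by $R$ (an unused edge disjoint from $V(R)$ would extend $R$). Since each unused matching has $n>s$ edges, it covers a vertex on side $A$ that $R$ leaves exposed; starting there I build an alternating path that takes an unused-matching edge, then a backward $R$-edge, then a \emph{fresh} unused-matching edge, and so on. Each forward step spends a new colour (the index of an unused matching), and because the graph is bipartite and $R$ is maximum the path can neither close up into a cycle nor reach an $R$-exposed endpoint, as either outcome would produce a rainbow matching of size $s+1$. Bounding $N$ thus becomes the problem of bounding how many colours this process can consume before it is forced to augment.

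\textbf{The main obstacle: the exact coefficient $\frac{d+1}{d}$.} A naive count charges one fresh colour to each of the at most $s$ possible $R$-edges along an alternating path, giving only the crude bound $N\le 2s=2(n-d)$. To reach the sharp $\frac{d+1}{d}n$ I expect to use the $d$ exposed vertices simultaneously: grouping the unused matchings and applying a pigeonhole argument across the $d$ deficiency slots, so that within every block of $d+1$ unused matchings some alternating path can be rotated into an augmentation. Making this pigeonhole yield precisely $\lfloor\frac{d+1}{d}n\rfloor-(d+1)$, together with its interaction with the floor function, is where the genuine care lies. As a fallback I would keep an induction on $n$ in reserve—reducing size-$n$ matchings to size-$(n-1)$ matchings by deleting a well-chosen edge and its conflicts and then re-extending the rainbow matching—whose per-step budget of one or two extra matchings matches the increments of $\lfloor\frac{k+2}{k+1}n\rfloor$.
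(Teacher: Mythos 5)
The paper does not actually prove this statement: it is quoted from the reference [BGS] (Bar\'at--Gy\'arf\'as--S\'ark\"ozy) without proof, so there is no in-paper argument to compare against, and your proposal must stand on its own. It does not. The difficulty is that your ``key lemma'' \emph{is} the theorem: for the exact deficiency $d=k+1$, the statement ``deficiency $\ge d$ implies $N\le\lfloor\frac{d+1}{d}n\rfloor-(d+1)$'' is precisely the contrapositive of ``$N\ge\lfloor\frac{k+2}{k+1}n\rfloor-(k+1)$ implies a rainbow matching of size $n-k$.'' Your wrap-up (monotonicity of $\phi$) is arithmetically correct but only repackages the claim; all of the content is deferred to the lemma, and the lemma is not proved. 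You acknowledge this yourself: the augmenting-path setup yields only the naive bound $N\le 2s=2(n-d)$, which for $d\ge 2$ and large $n$ is strictly weaker than the target $\lfloor\frac{d+1}{d}n\rfloor-(d+1)$ (e.g.\ $2n-4$ versus $\lfloor\frac{3}{2}n\rfloor-3$ when $d=2$), and the step that would close this gap --- the pigeonhole over the $d$ exposed vertices that is supposed to produce the coefficient $\frac{d+1}{d}$ --- is described only as something you ``expect'' to work, with the admission that this is ``where the genuine care lies.''

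Two smaller points. First, the claim that an alternating path ``can neither close up into a cycle nor reach an $R$-exposed endpoint, as either outcome would produce a rainbow matching of size $s+1$'' needs justification in the rainbow setting: augmenting along such a path replaces several $R$-edges by edges of unused matchings, and one must check that distinct fresh colours can be assigned to all new edges (this is the heart of the Drisko/Aharoni--Berger argument, not a one-line observation). Second, the fallback induction on $n$ is likewise only gestured at. As it stands, the proposal is a correct reduction of the theorem to itself together with a plausible but unexecuted plan of attack, so it cannot be accepted as a proof.
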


In particular, $f_\cb(n, n-1) \le \lfloor \frac{3}{2}n \rfloor -2$.

In \cite{ABCHS} a bound was proved also for line graphs of general graphs:

\begin{theorem}\label{gendrisko}
$f_\cg(n,n) \le 3n-2$.
\end{theorem}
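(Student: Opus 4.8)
The plan is to argue by contradiction through a maximal rainbow matching, augment it along alternating structures, and locate the source of the extra $n-1$ matchings (compared with the bipartite bound $2n-1$ of Theorem~\ref{drisko}) in the odd alternating cycles that general graphs permit. Restated in matching language: given matchings $M_1,\dots,M_{3n-2}$ of size $n$ in a graph $G$, we seek a rainbow matching of size $n$, that is, edges $f_1,\dots,f_n$ with $f_i\in M_{c_i}$ for distinct colours $c_i$ and $\{f_1,\dots,f_n\}$ a matching. Suppose for contradiction that a maximum rainbow matching $R$ has size $r\le n-1$; write $I$ for its colour set ($|I|=r$) and $V(R)$ for its $2r$ covered vertices. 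A first, easy reduction shows that every unused matching $M_j$ (with $j\notin I$) is blocked: if $M_j$ had an edge disjoint from $V(R)$ we could append it with colour $j$, so every edge of $M_j$ meets $V(R)$; since $M_j$ is a matching and $|V(R)|=2r$, this already forces $n\le 2r$, i.e. $r\ge n/2$. To climb from $n/2$ up to $n$ we must move the uncovered vertices around.

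The engine is colour-rotation along alternating paths. Starting from a vertex uncovered by $R$, I would follow an edge of some unused matching into $V(R)$, then the incident $R$-edge, then an edge of another unused matching, and so on, always spending a fresh colour. If such an alternating walk reaches a second uncovered vertex it has one more non-$R$ edge than $R$-edge, and swapping (keep the new, distinctly coloured edges, discard the traversed $R$-edges) produces a rainbow matching of size $r+1$, a contradiction. This swap is legitimate precisely because the inserted edges carry pairwise distinct, previously unused colours, so the rainbow condition survives while the size strictly increases. Hence no augmenting alternating path can exist, and the alternating search must instead close up on itself. Organising this search as a Hungarian-type alternating forest grown with distinct colours, the count of matchings consumed before the search saturates is what lower-bounds the number of matchings needed to block every augmentation.

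The main obstacle is exactly the feature separating $\cg$ from $\cb$: in a general graph the alternating search can create odd cycles (blossoms), which trap the uncovered vertex and obstruct a clean augmenting path even when fresh colours remain. The heart of the argument is therefore a blossom analysis quantifying how much of the colour budget a single odd alternating cycle can absorb, and then summing this over the forest; this is where I expect the real work to lie, and it is what upgrades Drisko's $2n-1$ to $3n-2$ (the extra $n-1$ being, morally, one blocked matching per odd-cycle obstruction). I would first run the blossom-free case to recover Theorem~\ref{drisko} as a sanity check on the bookkeeping, and only then fold in blossoms.

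An alternative, more black-box route worth recording is the topological one. A rainbow independent $n$-set in $L(G)$ is guaranteed by the topological colourful (Hall-type) theorem once the matching complex $\M(G')$ of the union $G'$ of any subfamily of the given matchings is sufficiently highly connected relative to the size of that subfamily, and the known homological connectivity estimates for matching complexes of \emph{general} graphs degrade by exactly the factor that turns $2n-1$ into $3n-2$. On this route the obstacle migrates into establishing the connectivity estimate and into tracking the deficiency (since we need a rainbow set of size $n$ rather than one meeting all $3n-2$ families), but it avoids the explicit blossom surgery of the combinatorial plan.
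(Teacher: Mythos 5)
The first thing to note is that the paper does not actually prove Theorem \ref{gendrisko}; it is quoted from \cite{ABCHS}, so there is no in-paper argument to measure yours against. Judged on its own terms, what you have written is a plan rather than a proof, and the plan stops exactly where the theorem begins. Your opening reduction is correct but easy: a maximum rainbow matching $R$ of size $r\le n-1$ forces every edge of every unused $M_j$ to meet $V(R)$, and since $M_j$ is a matching this gives $n\le |V(R)|=2r$. The augmentation step is also sound in principle (an alternating walk between two $R$-uncovered vertices whose non-$R$ edges carry pairwise distinct unused colours yields a rainbow matching of size $r+1$). But the entire content of the bound $3n-2$ lies in the claim you explicitly defer: that the odd-cycle (blossom) obstructions can absorb only about $n-1$ colours beyond the bipartite budget of $2n-1$. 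You give no mechanism for bounding the colour budget consumed by a single blossom, no argument that distinct blossoms consume disjoint budgets, and no summation that actually yields $3n-2$; you say only that this is "where the real work lies". Without that lemma the argument establishes nothing beyond $r\ge n/2$. Moreover the heuristic "one blocked matching per odd-cycle obstruction" is not obviously the right accounting: the alternating forest can contain far more than $n-1$ odd cycles, and nested blossoms can trap a single uncovered vertex, so you would need to show the total damage is controlled by $r\le n-1$ (e.g.\ charged injectively to edges of $R$) rather than by the number of cycles. That is a genuine missing idea, not bookkeeping.

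The alternative topological route has the same status. The assertion that the homological connectivity of matching complexes of general graphs "degrades by exactly the factor that turns $2n-1$ into $3n-2$" is precisely the nontrivial input one would have to state and prove (a connectivity lower bound for the independence complex of $L(G)$ in terms of the matching number, together with a deficiency version of the colourful Hall-type theorem, since you want a rainbow set of size $n$ rather than one representing all $3n-2$ families). You neither formulate this precisely nor verify that the known constants give $3n-2$ rather than some weaker linear bound. In short: both routes are plausible directions, the elementary reductions you carry out are correct, but in each case the single step that separates the theorem from a triviality is absent.
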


Guided by  examples from \cite{BGS}, the following was conjectured there:

\begin{conjecture}\label{c.ABversion 1}
 $f_\cg(n,n) = 2n$ and for $n$ odd $f_\cg(n,n) =2n-1$. 
\end{conjecture}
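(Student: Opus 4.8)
The plan is to treat the conjecture through the identification of independent sets in line graphs with matchings, so that $f_\cg(n,n)$ is the least $k$ for which every $k$ matchings of size $n$ in any graph admit a full rainbow matching. First, the lower bound $f_\cg(n,n)\ge 2n-1$ is essentially free: since $\cb\subseteq\cg$, inequality \eqref{monotonepartial} gives $f_\cg(n,n)\ge f_\cb(n,n)$, and Drisko's Theorem~\ref{drisko} together with Example~\ref{driskosharp} shows $f_\cb(n,n)=2n-1$. Thus for odd $n$ the lower bound already matches the conjectured value, and the entire remaining content for odd $n$ is the upper bound $f_\cg(n,n)\le 2n-1$.

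For even $n$ I would separately establish the sharper lower bound $f_\cg(n,n)\ge 2n$ by exhibiting $2n-1$ matchings of size $n$ with no rainbow matching of size $n$. The natural source of such examples is odd cycles: whereas Drisko's construction lives on the even cycle $C_{2n}$, whose only two perfect matchings split its edges into two colour classes, I would look for a gadget built from odd cycles (near-perfect matchings, each omitting one vertex) in which a parity obstruction blocks completion of a rainbow matching from $2n-1$ lists precisely when $n$ is even. Guided by the examples of \cite{BGS}, the target is a graph on which the rotationally generated near-perfect matchings, taken with suitable multiplicities, cover the edges so that every size-$n$ rainbow selection is obstructed, with the even parity of $n$ being exactly what renders the extra $(2n-1)$st list unusable.

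The decisive step, and the main obstacle, is the upper bound: $f_\cg(n,n)\le 2n$ for even $n$ and $\le 2n-1$ for odd $n$, improving the known $3n-2$ of Theorem~\ref{gendrisko}. I would attack this topologically, through the matching complex $\M(G)=\I(L(G))$ of the union of the given lists: by an Aharoni--Berger--Meshulam-type criterion, a rainbow matching of size $n$ is guaranteed once the topological connectivity $\eta$ of the matching complex restricted to the support of each subfamily is at least the cardinality of that subfamily. The bound $3n-2$ reflects the general-graph connectivity estimate for $\M(G)$, while the bipartite estimate yields $2n-1$; the whole difficulty is to close this gap. My honest expectation is that connectivity alone cannot suffice, since the answer genuinely depends on the parity of $n$, so the plan is to combine the topological bound with an augmenting-path analysis in the spirit of the proof of Drisko's theorem in \cite{AB}, tracking rainbow alternating paths through a maximum rainbow matching. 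Here the obstruction is precisely the source of the parity dependence: in general graphs alternating structures can close up into blossoms (odd alternating cycles), which the bipartite argument never meets, and controlling these blossoms tightly enough to save the single list separating $2n$ from $3n-2$ --- together with the additional saving for odd $n$ --- is the crux that keeps the statement conjectural.
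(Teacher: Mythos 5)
The statement you are trying to prove is stated in the paper as a \emph{conjecture} (attributed to \cite{BGS} and \cite{ABCHS}), and the paper offers no proof of it; so there is no argument to compare yours against, and your proposal, by your own admission, does not close the gap either. Concretely, two of your three steps are incomplete. For the lower bound $f_\cg(n,n)\ge 2n$ when $n$ is even, you describe only a search strategy (``a gadget built from odd cycles'') rather than a construction; the paper's Example \ref{neven} gives an explicit one, and it does \emph{not} use odd cycles: one takes the $2n-2$ matchings of Drisko's extremal example on $C_{2n}$ and adds a single perfect matching $N$ consisting of chords of even length. Each such chord encloses an odd set of vertices of the cycle, which cannot be perfectly matched within itself, so no edge of $N$ can participate in a rainbow perfect matching, and one is reduced to the $2n-2$ matchings of Example \ref{driskosharp}. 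This is where the parity of $n$ enters: such an $N$ exists if and only if $n$ is even. You should supply this (or an equivalent) construction explicitly rather than gesture at it.

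The upper bounds $f_\cg(n,n)\le 2n$ ($n$ even) and $\le 2n-1$ ($n$ odd) are the real content, and your proposal leaves them entirely open, as you acknowledge; the best known bound remains $3n-2$ from Theorem \ref{gendrisko}. The only ``supporting evidence'' the paper records is Remark \ref{goodreason}: the doubling construction of Example \ref{twok4} shows that Conjecture \ref{strangebuttrue} (every $n-1$ matchings of size $n$ in a line graph not of that exceptional form have a full rainbow matching) would imply Conjecture \ref{c.ABversion 1}, since a counterexample to the latter would, after doubling, produce a counterexample to the former. If you want to make progress, that implication, or the bounded-degree reformulation in Remark \ref{rem.k=2revisited} and Conjecture \ref{conj m<n}, is a more promising route than trying to sharpen the topological connectivity bound, which (as you correctly observe) cannot by itself detect the parity of $n$.
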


In Remark \ref{goodreason} below we shall give  supporting evidence for this conjecture.

There is an example showing that for $n$ even $f_\cg(n,n) \ge 2n$. Since we shall use it below, we describe it explicitly. 

\begin{example}\cite{ABCHS}\label{neven}
Let $n=2k$. In the cycle $C_{2n}$ repeat each of the two perfect matchings $n-1$ times. To this add a perfect matching $N$ consisting solely of edges of even length (such $N$ can be shown to exist if and only if $n$ is even). Since an even length edge encloses an odd set, that cannot be matched within itself, the edges of $N$ cannot be used for a rainbow perfect matching. Thus we are back in the situation of Example \ref{driskosharp}, in which as we saw there is no perfect rainbow matching. 
\end{example}

In \cite{AHZ} the following fractional version was proved: 
\begin{theorem}
Let  $F_1, \dots ,F_{2n}$ be sets of edges in a  graph $G$. 
If $\nu^*(F_i) \ge n$ for each $i\le 2n$, then there exists a partial rainbow set $F$ of edges with $\nu^*(F) \ge n$. If $G$ is bipartite, then any  $2n-1$ sets $F_i$ with $\nu^*(F_i) \ge n$ have a partial rainbow set $F$ of edges with $\nu^*(F) \ge n$.
\end{theorem}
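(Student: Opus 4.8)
The plan is to separate the two assertions and reduce each to the integral rainbow matching theorem, Theorem~\ref{drisko}.

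First, the bipartite statement is essentially a restatement of Theorem~\ref{drisko}. If $G$ is bipartite then every subgraph $(V(G),F)$ is bipartite, so by König's theorem $\nu^*(F)=\nu(F)$ for every edge set $F\subseteq E(G)$. Hence the hypothesis $\nu^*(F_i)\ge n$ reads $\nu(F_i)\ge n$, and a partial rainbow set $F$ with $\nu^*(F)\ge n$ is the same as a partial rainbow set containing a matching of size $n$. Choosing a matching $M_i\subseteq F_i$ of size $n$ and applying Theorem~\ref{drisko} to $M_1,\dots,M_{2n-1}$ produces a rainbow matching of size $n$, which is the desired $F$.

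For the general case the idea is to manufacture the same reduction by passing to the bipartite double cover $\tilde G$ (the tensor product $G\times K_2$), with its two-to-one covering map $\pi\colon\tilde G\to G$. The key identity is $\nu(\tilde H)=2\nu^*(H)$ for every subgraph $H$: an edge lifts to a matching of size two, while an odd cycle $C_{2k+1}$ — the source of half-integrality in the fractional matching polytope — lifts to a single even cycle $C_{4k+2}$, whose matching number $2k+1=2\nu^*(C_{2k+1})$ doubles the fractional value. Thus each lifted class $\tilde F_i=\pi^{-1}(F_i)$ satisfies $\nu(\tilde F_i)\ge 2n$ in the bipartite graph $\tilde G$. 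I would now aim to extract a matching $M$ of size $2n$ in $\tilde G$ whose projection is a partial rainbow set downstairs, meaning $|\pi(M)\cap F_i|\le 1$ for each $i$; setting $F:=\pi(M)$ then gives $\nu^*(F)=\tfrac12\nu(\pi^{-1}(F))\ge\tfrac12|M|=n$. Crucially, $M$ is permitted to use both lifts of a single base edge, which is exactly what the half-integral picture demands (edge-parts of a fractional matching use both lifts, odd-cycle-parts use one lift of each base edge).

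This last extraction is the main obstacle, and it carries the entire non-bipartite content. A black-box application of Drisko to $\tilde G$, using one edge per class, would require $4n-1$ classes to reach size $2n$, whereas only $2n$ are available; the deficit must be covered by taking a second lifted edge from some classes, under the constraint that the two edges be the two lifts of a common base edge. Running the Drisko-type alternating-cycle argument in $\tilde G$ equivariantly with respect to the deck involution of $\pi$, so that the chosen edges descend to a genuine rainbow family, is the crux. I expect the parity that separates $2n-1$ from $2n$ to surface precisely here, as in Example~\ref{neven}: the extra class absorbs an even-length-edge (odd-set) obstruction that has no resolution inside the cover, which is also why the best known integral guarantee for general graphs (Theorem~\ref{gendrisko}) uses as many as $3n-2$ classes. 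An alternative, double-cover-free route applies the Aharoni--Haxell topological Hall theorem to the matching complex $\mathcal{M}(G)$ with a connectivity lower bound in terms of $\nu^*$; but there the same difficulty resurfaces as the need to certify a \emph{fractional} rather than an integral rainbow object, so the odd-cycle analysis is unavoidable either way.
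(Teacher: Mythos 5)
This theorem is not proved in the paper at all: it is quoted from \cite{AHZ}, and the only argument the paper supplies is the observation (which you reproduce correctly) that the bipartite half is a reformulation of Theorem~\ref{drisko} because $\nu^*=\nu$ in bipartite graphs. So the only part of your proposal that can be checked against the paper is that reduction, and it is fine.

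The general-graph case, which is the actual content of the theorem, is not established by your proposal. Your double-cover setup is sound as far as it goes: $\nu(\tilde H)=2\nu^*(H)$ is correct, and a matching $M$ of size $2n$ in $\tilde G$ does project to an edge set $F=\pi(M)$ with $\nu^*(F)\ge n$ (give each base edge weight $\tfrac12$ per lift it receives). But the step you yourself flag as ``the main obstacle'' --- producing such an $M$ of size $2n$ from only $2n$ lifted classes so that $\pi(M)$ is rainbow --- is exactly where the whole difficulty lives, and you give no argument for it. Drisko applied to $\tilde G$ with one edge per class needs $4n-1$ classes to reach size $2n$; to close the deficit you must take both lifts of a common base edge from roughly half the classes, and there is no known Drisko-type alternating-path argument that respects the deck involution in this way. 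Saying you ``expect the parity to surface here'' is a statement of hope, not a proof; as written, the reduction leaves you with a problem at least as hard as the original theorem. For the record, the proof in \cite{AHZ} does not go through the double cover at all: it is a topological/linear-programming argument (a colorful Carath\'eodory-type analysis of the fractional matching polytope, equivalently a collapsibility bound for the complex of edge sets with $\nu^*<n$ combined with the Kalai--Meshulam theorem). If you want to pursue your route, the equivariant extraction lemma is the thing you would have to state and prove; without it the general case is unproved.
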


Since in bipartite graphs $\nu^*=\nu$, the second part of the theorem  is a re-formulation of Drisko's theorem. 

  In \cite{AB} the following was conjectured:

\begin{conjecture}\label{ABversion2}
$f_\cb(n,n-1) \leq n$.
\end{conjecture}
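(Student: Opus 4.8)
The plan is to work in the matching reformulation. Since independent sets in the line graph of a bipartite graph $G$ are exactly the matchings of $G$, and a partial rainbow independent set corresponds to a partial rainbow matching, the conjecture asserts the following: any $n$ matchings $F_1,\dots,F_n$, each of size $n$, in a bipartite graph $G$ with sides $X,Y$ possess a partial rainbow matching of size $n-1$. I would argue by contradiction through an extremal choice: let $R=\{e_1,\dots,e_r\}$ be a partial rainbow matching of maximum size, say $e_i\in F_{c_i}$ for distinct colours $c_1,\dots,c_r$, and suppose $r\le n-2$, so that at least two colours are unused.

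First I would record the elementary consequences of maximality. For every unused colour $j$, no edge of $F_j$ is disjoint from $V(R)$ (otherwise $R$ could be extended), so each of the $n$ edges of the matching $F_j$ meets the $2r$-element vertex set $V(R)$; as $F_j$ is a matching this forces $n\le 2r$, i.e.\ $r\ge n/2$. This crude bound is exactly the content of the factor $2$ in Drisko's theorem (Theorem \ref{drisko}), and on its own it is hopeless for reaching $r\ge n-1$. The real work is an augmentation argument along alternating paths: starting from an edge of an unused matching $F_j$ that meets some $e_i$, one tries to reroute $e_i$ to another edge of its own colour class $F_{c_i}$, iterating to build an $R$-alternating path; reaching a vertex outside $V(R)$ yields a larger rainbow matching, contradicting maximality. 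If no augmentation exists, every such alternating walk must terminate, and the combinatorial count of how many colours are consumed by these terminations is what produces the threshold. Drisko's proof charges roughly two colours to each edge of $R$ (one per endpoint), giving at most $2r\le 2n-2<2n-1$ blocked colours; the refinement of \cite{BGS} improves the charge to obtain the factor $\tfrac32$, i.e.\ $f_\cb(n,n-1)\le\lfloor\tfrac32 n\rfloor-2$.

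The heart of the proposed proof is to push this charging scheme down to factor $1$: to show that at most $r$ colours can be blocked, so that among the $n>r$ colours some colour admits an augmenting alternating path, contradicting the maximality of $R$. The surplus available for this is twofold --- each matching has size $n$, one more than the target $n-1$, and there are at least two unused colours --- and I would try to exploit it by constructing, for each obstruction encountered along an alternating path, two vertex-disjoint continuations, so that each edge of $R$ is charged at most once. A cleaner, if less self-contained, alternative is the topological route: writing $\mathcal{M}(H)=\ci(L(H))$ for the matching complex, the defect form of the topological rainbow (independent transversal) theorem of Aharoni--Berger--Ziv reduces the conjecture to showing that for every subfamily $I\subseteq[n]$ the complex $\mathcal{M}\!\big(\bigcup_{i\in I}F_i\big)$ has topological connectivity $\eta\ge |I|-1$ (in the usual normalization). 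Each such union is a bipartite graph that is a union of $|I|$ matchings of size $n$, so its matching number is at least $n$, and the task becomes a pure connectivity estimate for matching complexes of unions of equal-size matchings.

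Either way, the main obstacle is the same sharpening. The generic connectivity bound $\eta(\mathcal{M}(H))\gtrsim\nu(H)/2$ (equivalently, the factor-$2$ charging) only yields $\eta\ge n/2$, which falls far short of the required $|I|-1$ once $|I|$ is close to $n$; it is precisely this loss of a factor $2$ that separates the proven bound $\lfloor\tfrac32 n\rfloor-2$ from the conjectured $n$. I would expect the decisive difficulty to be showing that the one extra edge per matching can always be cashed in to avoid double-charging --- globally, across all obstructions simultaneously --- which is exactly where the gap between $\tfrac32$ and $1$ has so far resisted closing.
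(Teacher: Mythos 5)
This statement is a \emph{conjecture} in the paper, not a theorem: the authors state it as an open problem (a generalization of Ryser's conjecture on Latin squares), offer no proof, and record only the weaker bound $f_\cb(n,n-1)\le\lfloor\tfrac32 n\rfloor-2$ from \cite{BGS} together with the stronger open Conjecture \ref{abstrong}. So there is no proof in the paper against which to compare yours.

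Your proposal, read on its own terms, is a plan rather than a proof, and you are candid about this. The setup is fine: the reformulation as matchings, the extremal choice of a maximum partial rainbow matching $R$ of size $r\le n-2$, and the observation that maximality forces every edge of an unused matching to meet $V(R)$, hence $r\ge n/2$. But everything after that is a description of what \emph{would} need to be done, not an argument that does it. The ``heart'' of your proof --- showing that the alternating-path obstructions charge each edge of $R$ at most once, so that at most $r$ colours are blocked --- is exactly the open content of the conjecture, and you supply no mechanism for it: no definition of the charging, no construction of the two vertex-disjoint continuations you invoke, and no argument that the ``one extra edge per matching'' can be cashed in globally. The topological alternative has the same status: the required connectivity bound $\eta(\mathcal{M}(\bigcup_{i\in I}F_i))\ge|I|-1$ is a strengthening of the known $\eta\gtrsim\nu/2$ bound by essentially a factor of two, and establishing it for unions of equal-size matchings is equivalent in difficulty to the conjecture itself. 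In short, the gap in your proposal coincides with the gap in the literature; nothing you wrote closes it, and the statement remains unproven.
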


This means: every $n$ matchings of size $n$ in a bipartite graph have an $(n-1)$-rainbow matching. This is a generalization of a famous conjecture or Ryser \cite{ryserlatin} on transversals in Latin squares. 
In fact, this may well be true also for non-bipartite graphs: we do not know a counterexample to the stronger $f_\cg(n,n-1)=n$. 
Note the surprising jump from Conjecture \ref{c.ABversion 1}- raising $m$ from $n-1$ to $n$ almost doubles $f_\cg(n,m)$. The secret of this jump may be somewhat elucidated if the following is true: 

\begin{conjecture}
If $s < \frac{t}{2}$ then $f_{C_t}(s,s)=s$. 
\end{conjecture}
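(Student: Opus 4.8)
The plan is to prove the two inequalities separately. The lower bound $f_{C_t}(s,s)\ge s$ is immediate from \eqref{trivialbound}: since $t>2s$ we have $\alpha(C_t)=\lfloor t/2\rfloor\ge s$, so $C_t$ carries an independent $s$-set and \eqref{trivialbound} applies. All the work is in the upper bound, namely that every $s$-tuple $F_1,\dots,F_s$ of independent $s$-sets in $C_t$ admits a full rainbow independent set — an independent system of representatives (ISR) $x_1\in F_1,\dots,x_s\in F_s$ with the $x_i$ distinct and pairwise non-adjacent.

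My main line of attack is induction on $s$, cutting the cycle to a path. Viewing the classes on the vertex set $\mathbb Z_t$, I would choose a representative $v\in F_1$, commit to it, and delete its closed neighbourhood $N[v]=\{v-1,v,v+1\}$. The three deleted vertices are consecutive, so $C_t-N[v]$ is a path on $t-3$ vertices, and it remains to find an ISR for the truncated classes $F_i\setminus N[v]$, $i=2,\dots,s$, inside this path. The length bookkeeping is favourable: a path on $\ell$ vertices carries an independent $m$-set iff $\ell\ge 2m-1$, and here $\ell=t-3\ge 2(s-1)-1$ because $t\ge 2s+1$, so the recursion stays in the feasible range. Thus I would set up a companion statement for paths and alternate between the cycle step (a single cut, as above) and a purely path-based induction.

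The difficulty this creates — and, I expect, the main obstacle — is the erosion of the class sizes. A class $F_i$ can meet $N[v]$ in both $v-1$ and $v+1$ (these are non-adjacent), so after the cut a class may drop from size $s$ to size $s-2$, a deficit of one relative to the $s-1$ classes now in play. One would like to choose $v\in F_1$ so that no surviving class loses two elements; such a loss for $F_i$ happens exactly when $v$ is the lone vertex of a size-$1$ gap of $F_i$, and since $t>2s$ each $F_i$ has at least one gap of size $\ge 2$, so bad positions are not forced. But the bad positions for the $s-1$ other classes can number up to $(s-1)^2$, which for $s\ge 3$ exceeds $|F_1|=s$, so a counting bound does not by itself furnish a cut that preserves all sizes. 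Overcoming this seems to require either strengthening the induction hypothesis to tolerate classes slightly smaller than the number of colours (tracking a length-versus-size trade-off), or replacing the naive cut by an augmenting/rotation argument in the spirit of Drisko's theorem (Theorem~\ref{drisko}): take a maximum partial rainbow independent set, representing say $F_1,\dots,F_r$ with $r\le s-1$; every vertex of an unused class $F_{r+1}$ is then blocked, i.e.\ lies in $N[\{x_1,\dots,x_r\}]$, and one reroutes representatives along a chain of such blocking relations. On a cycle the geometry of these chains is tightly constrained, and the hope is that a non-augmentable configuration forces all gaps to be small, contradicting $t>2s$. Making this rotation argument close is where the real work lies.

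Finally, I would note why the off-the-shelf topological route is not enough, so as not to waste effort on it. The Aharoni--Berger--Ziv topological Hall theorem gives an ISR provided $\eta(\ci(C_t[\bigcup_{j\in J}F_j]))\ge |J|$ for every $J\subseteq[s]$, where $\eta$ is the topological connectivity and $\ci(\cdot)$ the independence complex; using that $\eta$ is additive over the disjoint paths into which $C_t[W]$ decomposes, together with Kozlov's computation of the homotopy types of $\ci(P_\ell)$ and $\ci(C_\ell)$, these quantities are explicitly computable. But when $\bigcup_{j\in J}F_j$ is all of $C_t$ the condition asks for $\eta(\ci(C_t))\ge |J|$, whereas $\eta(\ci(C_t))$ only grows like $t/3$ and is already smaller than $s$ for $t$ just above $2s$ (for instance $t=7,\ s=3$). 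In that very instance a direct check nonetheless produces an ISR, so the plain connectivity condition genuinely fails while the conclusion holds; any topological proof would have to exploit that the colour classes are honest independent $s$-sets rather than arbitrary vertex subsets. This is further evidence that the essential content is combinatorial, and that the augmenting-path analysis sketched above is the most promising avenue.
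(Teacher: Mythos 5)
This statement appears in the paper only as a conjecture; the authors supply no proof of it, so there is no argument of theirs to compare yours against. The question is therefore whether your proposal itself constitutes a proof, and it does not: it is a plan whose decisive step you explicitly leave open. The lower bound via \eqref{trivialbound} is correct, and your diagnosis of the obstacle to the naive cut is accurate. Indeed the failure you predict is realizable: for $t=7$, $s=3$, take $F_1=\{1,3,5\}$, $F_2=\{2,4,7\}$, $F_3=\{2,4,6\}$ in $C_7$; every $v\in F_1$ is the middle of a size-one gap of $F_2$ or of $F_3$, so every choice of cut drops some surviving class to size $s-2$, below what is needed to invoke the path (chordal) bound $f_\ct(n,m)=m$ of Theorem \ref{t.chordal rb} on the remainder. (An ISR still exists there, e.g.\ $\{1,4,6\}$, so the conjecture is not threatened --- only your reduction is.)

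Having located the difficulty, you offer two escape routes --- a strengthened induction hypothesis trading residual path length against class-size deficit, or a Drisko-style augmentation/rotation argument in the spirit of Theorem \ref{drisko} --- but you carry out neither: you say outright that making the rotation argument close ``is where the real work lies,'' and the structural consequence you want from a non-augmentable configuration (that all gaps are forced to be small, contradicting $t>2s$) is stated only as a hope, with no analysis of how blocking chains actually behave on a cycle. That unperformed work is the entire mathematical content of the statement. Your closing observation that the off-the-shelf topological Hall condition fails for $t$ just above $2s$ is a sensible sanity check, but it only rules out one approach rather than supplying another. As it stands, the proposal establishes nothing beyond the trivial inequality $f_{C_t}(s,s)\ge s$, and the statement remains, as in the paper, a conjecture.
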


A stronger version of Conjecture \ref{ABversion2} 
is:

\begin{conjecture}\label{abstrong}
$f_\cb(n,n-1)=n-1$. 
\end{conjecture}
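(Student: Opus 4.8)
The lower bound $f_\cb(n,n-1)\ge n-1$ is immediate from \eqref{trivialbound}, so the entire content is the upper bound: every family $M_1,\dots,M_{n-1}$ of matchings of size $n$ in a bipartite graph possesses a full rainbow matching of size $n-1$, i.e.\ a system $e_1\in M_1,\dots,e_{n-1}\in M_{n-1}$ of pairwise disjoint edges. A first sanity check shows why this is delicate. A greedy choice, having already selected $i-1$ disjoint edges covering $2(i-1)$ vertices, must pick $e_i\in M_i$ avoiding those vertices; since $M_i$ is a matching, up to $2(i-1)$ of its $n$ edges can be blocked, so there is room only while $2(i-1)<n$, which holds throughout only for $n\le 3$. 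Thus any proof must use an exchange/augmenting mechanism, and it must crucially exploit that each $M_i$ carries one more edge than the target size $n-1$: deleting an edge from every $M_i$ to obtain matchings of size $n-1$ would reduce the claim to finding a full rainbow matching among $n-1$ matchings of size $n-1$, and that statement is false in general—Latin squares of even order furnish $m$ matchings of size $m$ with no rainbow matching of size $m$—so the surplus edge cannot be discarded.

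My primary plan is the topological (independent-SDR) route underlying the results quoted in this section. Regard each $M_i$ as a colour class of vertices of the line graph $L(G)$, with $G$ bipartite; a full rainbow matching of size $n-1$ is exactly an independent system of distinct representatives for $M_1,\dots,M_{n-1}$ in $\ci(L(G))$. By the topological version of Hall's theorem (Aharoni and Haxell), such a system exists provided that for every $J\subseteq[n-1]$ the complex $\ci\bigl(L(G)[\bigcup_{i\in J}M_i]\bigr)$—that is, the matching complex of the subgraph $G_J:=\bigcup_{i\in J}M_i$—has topological connectivity $\eta\ge|J|$. I would try to verify this hypothesis from two facts: that $G_J$ is bipartite, where the matching complex enjoys stronger connectivity estimates than in general, and that $\nu(G_J)\ge n>|J|$, since $G_J$ already contains a single matching of size $n$. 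Writing $j=|J|$, the task reduces to the clean statement that the matching complex of a bipartite graph which is a union of $j$ matchings, each of size $n>j$, has $\eta$ at least $j$.

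The main obstacle is precisely this connectivity estimate. The general-purpose bound of Aharoni, Berger and Ziv only yields connectivity of order $\nu(G_J)/2\ge n/2$, which falls short of the required $j$ once $j>n/2$; this factor-of-two gap is the same phenomenon that blocks the naive proof of Drisko's theorem (Theorem \ref{drisko}) and must be closed using the extra structure—bipartiteness together with the fact that $G_J$ is a union of \emph{equal-size} matchings whose matching number strictly exceeds $j$. The hard regime is when the chosen matchings overlap heavily, so that $\nu(G_J)$ is close to $n$ and the complex is nearly as small as the bound permits; there the augmenting structure (an alternating walk through several colour classes, as in the cyclic argument behind Theorem \ref{drisko}) can get stuck, and the single surplus edge per matching is exactly the resource that should unstick it. Converting this intuition into a proof—either by sharpening the matching-complex connectivity bound for unions of equal matchings in the bipartite setting, or by a direct alternating-path analysis tracking how the surplus edges are consumed—is the step I expect to be genuinely difficult, and is why the statement is recorded here only as a conjecture.
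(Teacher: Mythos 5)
This statement is recorded in the paper as a conjecture, not a theorem: it strengthens Conjecture \ref{ABversion2}, which in turn generalizes Ryser's conjecture on transversals in Latin squares, and the paper offers no proof of it (only the remark that the analogous statement fails for non-bipartite graphs, via Example \ref{twok4}). So the honest answer is that there is nothing in the paper to compare your argument against, and your own text concedes the decisive point: what you have written is a proof \emph{plan} whose central step is left open.

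Concretely, the gap is the connectivity estimate you isolate in your third paragraph. The topological Hall theorem reduces the claim to showing that for every $J\subseteq[n-1]$ the matching complex of the bipartite graph $G_J=\bigcup_{i\in J}M_i$ has connectivity $\eta\ge|J|$, and the hypotheses available are only that $G_J$ is bipartite, is a union of $|J|$ matchings, and has $\nu(G_J)\ge n>|J|$. The best known general bounds give connectivity on the order of $\nu(G_J)/2$, which is insufficient exactly when $|J|>n/2$; this factor-of-two loss is the same obstruction that makes Theorem \ref{drisko} require $2n-1$ matchings rather than $n$, and no known sharpening exploits the ``union of equal-size matchings'' structure to close it. Your diagnosis of why greedy fails, why the surplus edge in each $M_i$ cannot be discarded (even-order Latin squares), and where the hard regime lies is all sound, but none of it substitutes for the missing connectivity (or alternating-path) argument. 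As it stands the statement remains open, and your proposal should be read as a correct identification of the difficulty rather than as a proof.
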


Namely, every $n-1$ matchings of size $n$ in a bipartite graph have a full rainbow matching.
As stated, this conjecture is false for  general graphs.  

\begin{example}\label{twok4}
Let $n$ be even. Take Example \ref{neven}, and multiply it by $2$. This means making another copy $V'$ of the vertex set, and adding to each matching its copy in $V'$.
This results in $2n-1$ matchings, each of size  $2n$. We claim that  there is no rainbow matching of size $2n-1$. If such existed, $n$ of its edges would be in one copy of Example \ref{neven}, contradicting the fact that this example does not possess a rainbow matching of size $n$.   
\end{example}

\begin{conjecture}\label{strangebuttrue}
If $G$ is the line graph of a graph, and it is not a graph as in Example \ref{twok4}, then $f_G(n,n-1)=n-1$.
\end{conjecture}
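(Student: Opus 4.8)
The plan is to translate everything into the matching language: in $G=L(H)$ the independent $n$-sets are the matchings of size $n$ of $H$, and a full rainbow independent $(n-1)$-set is a system of distinct edge-representatives $e_1\in M_1,\dots,e_{n-1}\in M_{n-1}$ that is itself a matching of $H$. Since the lower bound $f_G(n,n-1)\ge n-1$ is \eqref{trivialbound}, the whole content is the upper bound. I would argue by contradiction: assume $M_1,\dots,M_{n-1}$ are matchings of size $n$ in $H$ admitting no full rainbow matching, and aim to force $G$ to be a graph as in Example \ref{twok4}.

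First I would run the augmenting-path machinery behind Theorems \ref{drisko} and \ref{gendrisko}. Let $R=\{e_i:i\in I\}$ be a rainbow matching of maximum size $r=|I|\le n-2$; at least one colour $j$ is unused. A direct count shows every edge of $M_j$ must meet the $2r$ vertices of $R$ (otherwise $R$ extends), so $n\le 2r$ and $r\ge n/2$. To handle the range $n/2\le r\le n-2$, I would chase alternating walks that alternate between $R$ and the $M_j$'s exactly as in Drisko's argument, so that the non-augmentability of $R$ is witnessed by an \emph{odd enclosing set}: a set $S$ of odd size that each unused matching saturates except for one vertex, and which therefore cannot be matched within itself. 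This is precisely the phenomenon isolated in Example \ref{neven}, where an even-length edge encloses an odd set.

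The core of the proof would then be a rigidity step converting this local obstruction into the global structure of Example \ref{twok4}. Here I would invoke the known characterisation of the tight cases of Drisko's theorem --- that the only families of $2n-2$ matchings of size $n$ in a \emph{bipartite} graph without a rainbow $n$-matching are the two perfect matchings of $C_{2n}$, each repeated $n-1$ times (Example \ref{driskosharp}). Threading the odd enclosing set through this rigidity, and using that the even perfect matching $N$ of Example \ref{neven} is the unique way to cap off such an odd obstruction, should force $n$ to be even and $H$ to split into two synchronised copies of the Example \ref{neven} configuration, i.e.\ to be a graph as in Example \ref{twok4}. The excluded-case hypothesis then yields the contradiction, and with it $f_G(n,n-1)\le n-1$.

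The main obstacle is twofold. The bipartite instance of the statement is exactly Conjecture \ref{abstrong}, a strengthening of Ryser's conjecture, which remains open; so an unconditional proof must either settle that as a base case or find a route that avoids full bipartite tightness. Granting the bipartite case, the genuinely delicate point is the rigidity step: one must show that the odd-cycle obstruction is not merely \emph{some} non-bipartite gadget but precisely the \emph{doubled} configuration, and in particular that the two copies are forced to be synchronised across all $n-1$ colours rather than merely to coexist. Controlling this synchronisation --- where a naive stability argument only yields two independent copies --- is where I expect the real work to lie.
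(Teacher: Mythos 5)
The statement you are trying to prove is Conjecture~\ref{strangebuttrue}, and the paper offers no proof of it: it is stated as an open conjecture, whose bipartite special case is Conjecture~\ref{abstrong}, itself a strengthening of Ryser's conjecture on transversals in Latin squares. Even the very restricted instance for graphs of maximum degree $2$ (Conjecture~\ref{k=2ab}) is only settled in the paper when the graph contains at most one cycle (via Theorem~\ref{t.chordal rb}). So there is nothing in the paper to compare your argument against, and the question is only whether your proposal closes the gap. It does not, and you say as much yourself: your entire strategy routes through the bipartite case as a ``base case,'' which is exactly the open Conjecture~\ref{abstrong}. A proof that is conditional on an open strengthening of Ryser's conjecture is not a proof.

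Beyond that acknowledged dependence, the two technical ingredients you invoke do not do what you need. The augmenting-path count gives $r \ge n/2$ for a maximum rainbow matching $R$, but the alternating-walk machinery behind Theorems~\ref{drisko} and~\ref{gendrisko} is designed for the regime of $2n-1$ (or $3n-2$) matchings, where the surplus of colours over $|R|$ is what drives the augmentation; with only $n-1$ matchings there is no such surplus, and no argument is given that non-augmentability produces an ``odd enclosing set'' at all. Likewise, the rigidity result you cite (the characterisation of extremal families for Drisko's theorem, from \cite{AKZ}) concerns $2n-2$ matchings of size $n$ with no rainbow matching of size $n$ --- a different pair of parameters from the $n-1$ matchings and target size $n-1$ here --- so it cannot be ``threaded through'' to force the doubled structure of Example~\ref{twok4}. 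The synchronisation issue you flag at the end is real, but it is downstream of these two unfilled gaps. As it stands, the proposal is a research plan for an open problem, not a proof.
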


\begin{remark}\label{goodreason}
The way Example \ref{twok4}  is constructed shows that Conjecture \ref{strangebuttrue} implies Conjecture \ref{c.ABversion 1}: if the latter fails, then the construction provides a counterexample to the first.
\end{remark} 

In Section \ref{bounded_degrees} (see Remark \ref{rem.k=2revisited})
we shall see an explanation for  the mysterious jump from $n-1$ in the conjecture to $2n-1$ in Theorem \ref{drisko}. 
We shall meet there  an interesting special case of Conjecture \ref{strangebuttrue}:

\begin{conjecture}\label{k=2ab}
If $G$ is a graph of maximal degree $2$, then 
$f_G(n,n-1)=n-1$. 
\end{conjecture}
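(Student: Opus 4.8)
By the trivial bound~\eqref{trivialbound} we have $f_G(n,n-1)\ge n-1$, so the task is the matching upper bound: any $n-1$ independent $n$-sets $F_1,\dots,F_{n-1}$ in a graph $G$ with $\Delta(G)\le 2$ admit a \emph{full} rainbow independent set, i.e.\ pairwise distinct and pairwise nonadjacent vertices $x_i\in F_i$ (an independent system of representatives). Since $\Delta(G)\le 2$, the graph $G$ is a disjoint union of paths and cycles; equivalently, writing $G=L(H)$ with $H$ a union of paths and cycles, we must find a rainbow matching of size $n-1$ among $n-1$ matchings of size $n$ in $H$. Two features drive the argument: every vertex blocks at most two others (as $\Delta\le 2$), and each colour class is strictly larger than the number of classes ($|F_i|=n>n-1$), which is exactly the slack that is missing in the sharp examples for the target $m=n$.

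The plan is to run a maximal-ISR / augmenting argument in the spirit of Haxell's independent-transversal theorem and of the alternating-path proof of Drisko's theorem (Theorem~\ref{drisko}). Take a partial rainbow independent set $R=\{x_i:i\in S\}$ of maximum size $r=|S|$, and suppose for contradiction that $r\le n-2$, so some colour $c\notin S$ is unused. Every vertex of $F_c$ is then \emph{blocked}: it lies in $R$ or is adjacent to some $x_i\in R$. As each $x_i$ has at most two neighbours, the colours whose representatives touch $F_c$ form a set $S_1$ with $|F_c|\le |R|+2|S_1|$; the raw count $n\le 3r$ does not yet contradict $r\le n-2$, so the heart of the proof is to re-route the representatives $x_i$, $i\in S_1$, to alternative vertices of $F_i$, growing an alternating tree of colours. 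Because each class has $|F_i|=n\ge 2\Delta(G)$ vertices (the finitely many cases $n\le 3$ being checked directly), the tree cannot close up, and the extra slack $n-1<n$ should let us finally seat colour $c$, contradicting maximality. First I would carry out the re-routing assuming the $F_i$ are pairwise disjoint, which is essentially Haxell's theorem, and then adapt it to genuinely overlapping classes.

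An alternative that is better matched to the one-dimensional structure is to induct on the components and lengths, reducing to a single path or cycle and peeling off an end-vertex or an edge. For paths and even cycles the host $H$ is bipartite and the edge--vertex incidence structure is totally unimodular, so one may hope to settle this regime by a Hall/defect argument together with integrality; the genuinely new difficulty is the \emph{odd} cycles, where half-integrality enters and the available Drisko-type bounds are not tight. I expect the main obstacle to be precisely the interaction of two phenomena: the overlaps among the $F_i$, which block a direct appeal to the disjoint-class (independent-transversal) theorems, and the odd cycles. It is worth emphasising that a purely topological Hall/ISR approach cannot be applied off the shelf here: that route would require $\eta\bigl(\ci(G[\bigcup_{i\in I}F_i])\bigr)\ge |I|$ for every $I\subseteq[n-1]$, where $\eta$ denotes topological connectivity, but the independence complex of a cycle $C_N$ has connectivity of order $N/3$. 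Taking $I=[n-1]$ with $\bigcup_{i}F_i$ an entire cycle (for instance when each $F_i$ is one of the two maximum independent sets of an even cycle) gives $\eta\approx\tfrac23|I|<|I|$, so the topological condition fails even though the rainbow set exists; any proof must therefore exploit the explicit path/cycle structure rather than connectivity alone. Since the assertion is only conjectured, even the single-cycle case would already be a meaningful first step.
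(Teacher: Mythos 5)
This statement is an open \emph{conjecture} in the paper (a special case of Conjecture \ref{strangebuttrue}, and the $k=2$, $m=n-1$ instance of Conjecture \ref{conj m<n}); the authors give no proof, only the remark that if $G$ contains at most one cycle the claim follows from Theorem \ref{t.chordal rb}: delete one vertex $v$ of the cycle, note that $G-v$ is a union of paths and hence chordal, and apply $f_\ct(n-1,n-1)=n-1$ to the sets $I_j\setminus N[v]$ (or even $I_j \setminus \{v\}$), each of size at least $n-1$. So there is no proof in the paper to compare yours against, and your text is, by your own framing, a plan rather than a proof.

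As a plan it correctly identifies the obstacles but does not overcome them, and the gaps are genuine. First, the counting step around a maximum partial rainbow set $R$ of size $r\le n-2$ only yields $n=|F_c|\le |R|+2|S_1|\le 3r$, i.e.\ $r\ge n/3$; everything beyond that (``re-route the representatives,'' ``the tree cannot close up,'' ``the extra slack should let us finally seat colour $c$'') is asserted, not established, and this is precisely where a proof would have to live. Haxell's independent-transversal theorem cannot be invoked as a black box because the classes $F_i$ are neither pairwise disjoint nor a partition of $V(G)$, and the condition $|F_i|\ge 2\Delta(G)$ is irrelevant to the real difficulty, which (as you note) is the interaction of overlapping classes with odd cycles. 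Second, your proposed reduction ``to a single path or cycle'' by inducting on components is not justified: representatives for different colours may have to be drawn from different components, and it is not explained how to split the colour classes among components while preserving the hypothesis $|F_i|=n$ with $n-1$ classes. Your observation that the topological ISR machinery fails (since $\eta(\ci(C_N))\approx N/3$) is correct and is useful negative information, and your ``single cycle'' first step is indeed meaningful --- but note that the paper already disposes of that case by the one-line chordal reduction above, so the genuinely open content (graphs with two or more cycles, in particular odd ones) remains untouched by the proposal.
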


Since every graph $G$ of maximum degree 2 is the line graph $L(H)$ for some $H$ of maximum degree 2, this means the following: every $n-1$ matchings of size $n$ in a graph of maximal degree $2$  have a full rainbow matching. If the graph contains only one cycle, then the conjecture follows from Theorem   \ref{t.chordal rb} below, stating that in chordal graphs $f(n,n)=n$ (just remove one vertex from the cycle, making the graph an interval graph, and hence chordal).

Here are two more results strengthening 
Theorem \ref{drisko}.
One is a matroidal version, by Kotlar and Ziv:

\begin{theorem} \cite{kotlarziv}
If $\cm, \cn$ are matroids on the same ground set, then any $2n-1$ sets belonging to $\cm \cap \cn$ have a rainbow set of size $n$ belonging to $\cm \cap \cn$.
\end{theorem}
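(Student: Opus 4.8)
The plan is to generalize the combinatorial proof of Drisko's theorem (Theorem~\ref{drisko}), replacing augmenting paths in bipartite matchings by augmenting paths in the intersection of $\cm$ and $\cn$. Write $E$ for the common ground set and $I_1,\dots,I_{2n-1}$ for the given sets; since each has size at least $n$ and every subset of a common independent set is again common independent, I may shrink them so that $|I_i|=n$ for all $i$. Assign colour $i$ to the elements drawn from $I_i$, so that a rainbow common independent set is a partial transversal $R\in\ci(\cm)\cap\ci(\cn)$ equipped with an injective choice of a colour $c(x)$ with $x\in I_{c(x)}$ for each $x\in R$. Among all rainbow common independent sets choose one, $R$, of maximum size, and suppose for contradiction that $|R|=r\le n-1$. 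Since $R$ uses only $r$ distinct colours out of $2n-1$, at least $(2n-1)-(n-1)=n$ of the sets contribute no element to $R$; call these the \emph{fresh} sets. This single pigeonhole step is where the constant $2n-1$ is spent, and it is exactly tight: with only $2n-2$ sets one recovers Example~\ref{driskosharp}.

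Next I would bring in the augmenting-path theorem of matroid intersection. Fix a fresh set $I=I_j$. Because $I\subseteq R\cup I$ has size $n>r=|R|$, the set $R$ is not a maximum common independent set of the restrictions $\cm|_{R\cup I}$ and $\cn|_{R\cup I}$; hence the exchange digraph $D_{\cm,\cn}(R)$, whose arcs record the fundamental circuits of $\cm$ and of $\cn$, contains a shortest alternating source-to-sink path $P$ using only elements of $R\cup I$, and $R\bigtriangleup V(P)$ is a common independent set of size $r+1$. The additions along $P$ all lie in $I\setminus R$ and so carry the single fresh colour $j$, while the deletions lie in $R$; this is exactly the point at which a naive augmentation fails, since a path with more than one addition would demand several elements of colour $j$.

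The heart of the theorem---and the step I expect to be the main obstacle---is to upgrade this to a \emph{rainbow} augmentation. The mechanism, as in the bipartite case, is to exploit the abundance of fresh colours: rather than augmenting against a single fresh set, I would seek an augmenting structure whose additions can be rainbow-coloured by distinct fresh colours, which is possible because there are at least $n$ fresh sets available while an augmenting path out of a set of size $r$ produces at most $r+1\le n$ additions, provided a Hall-type transversal condition relating these additions to the fresh sets holds. To force such a condition I would make $R$ extremal for a secondary parameter (for instance, the lexicographically least multiset of colours, or the minimum total length of forced alternating structure) and then analyse how circuits of $\cm$ and of $\cn$ interleave along $P$: each deletion from $R$ frees its colour for reuse, while the interleaving controls which fresh sets can supply the additions. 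The desired contradiction is a dichotomy---either the analysis yields a genuine rainbow augmentation, contradicting the maximality of $|R|$, or it produces a rainbow common independent set of the same size but smaller secondary parameter, contradicting the extremal choice. Carrying out this alternating-circuit bookkeeping faithfully is the matroid analogue of Drisko's alternating-cycle argument and constitutes essentially the whole content of the theorem; a topological route via the connectivity of the complex $\ci(\cm)\cap\ci(\cn)$, which succeeds for two partition matroids (the bipartite case), is not available here, since such intersection complexes carry no good general connectivity bound.
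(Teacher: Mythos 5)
The paper does not prove this statement at all; it is quoted verbatim from Kotlar--Ziv \cite{kotlarziv}, so there is no internal proof to compare against. Judged on its own terms, your proposal has a genuine gap: it is a plan rather than a proof. The opening moves are sound --- shrinking each $I_i$ to size exactly $n$ (legitimate, since subsets of common independent sets are common independent), taking a maximum rainbow common independent set $R$ with $|R|=r\le n-1$, and the pigeonhole count giving at least $n$ fresh colours. You also correctly identify the obstruction: the matroid-intersection augmenting path built inside $R\cup I_j$ for a single fresh set $I_j$ makes all of its additions in $I_j\setminus R$, so it may demand several elements of the same colour $j$, and a naive augmentation does not stay rainbow.

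But everything after that point is conditional. The ``Hall-type transversal condition relating these additions to the fresh sets,'' the choice of a secondary extremal parameter, and the claimed dichotomy (either a rainbow augmentation exists or the secondary parameter drops) are stated as hopes, not established as facts; you say yourself that carrying out this bookkeeping ``constitutes essentially the whole content of the theorem.'' In particular, once you try to draw additions from several fresh sets simultaneously, you are no longer augmenting inside $R\cup I$ for a single common independent set $I$, and the standard augmenting-path theorem gives you no control over which fresh set each added element comes from, nor over whether the elements freed by deletions from $R$ can have their colours reassigned consistently. That interleaving-of-circuits analysis is precisely where Kotlar and Ziv's argument lives, and it is absent here. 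As written, the proposal establishes the setup and locates the difficulty, but does not prove the theorem.
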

Theorem \ref{drisko} is the special case in which both matroids are partition matroids. 
Another stronger version appeared in \cite{AKZ}, where it was shown that not all matchings need to have size $n$:

\begin{theorem}\label{strongerdrisko}
If $\cf=(F_1, \ldots ,F_{2n-1})$ is a family of matchings in a bipartite graph, and $|F_i|\ge \min(i,n)$ for every $i \le 2n-1$, then  $\cf$ has a rainbow matching. 
\end{theorem}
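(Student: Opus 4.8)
The plan is to deduce the theorem from the topological version of Hall's theorem applied to the matching complex, in its \emph{deficiency} form. Recall that a rainbow matching in $(F_1,\dots,F_{2n-1})$ is exactly a rainbow face of the independence complex $\mathcal{I}(L(G))$ — equivalently the matching complex $\mathcal{M}(G)$, whose faces are the matchings of $G$ — with respect to the vertex sets $F_1,\dots,F_{2n-1}$. I would import two ingredients: first, the Aharoni--Berger--Ziv lower bound on the topological connectivity of matching complexes of \emph{bipartite} graphs, namely $\eta(\mathcal{M}(G))\ge \nu(G)$, where $\eta$ denotes homological connectivity shifted so that $\eta(\mathcal C)\ge k$ means $\mathcal C$ is $(k-2)$-connected and $\nu$ is the matching number; second, the topological Hall theorem of Aharoni--Haxell.

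The deficiency form I want reads: if $\mathcal{C}$ is a complex with vertex subsets $V_1,\dots,V_m$ and $\eta\bigl(\mathcal{C}[\bigcup_{i\in I}V_i]\bigr)\ge |I|-d$ for every $I\subseteq[m]$, then $\mathcal{C}$ has a rainbow face meeting at least $m-d$ of the $V_i$. This follows from the ordinary topological Hall theorem by joining $\mathcal{C}$ with $d$ new cone points $w_1,\dots,w_d$ (which raises $\eta$ by exactly $d$) and enlarging each set to $V_i\cup\{w_1,\dots,w_d\}$: the hypothesis becomes the full Hall condition $\eta\ge|I|$, so the enlarged complex has a full rainbow face, and since at most $d$ of its representatives can be cone points, at least $m-d$ genuine representatives survive and form the required face.

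The heart of the argument is the connectivity estimate. For $\emptyset\ne I\subseteq[2n-1]$, the union $G_I=\bigcup_{i\in I}F_i$ contains each $F_i$ as a matching, so $\nu(G_I)\ge\max_{i\in I}|F_i|\ge\max_{i\in I}\min(i,n)\ge\min(\max I,\,n)\ge\min(|I|,n)$, using $\max I\ge|I|$. The Aharoni--Berger--Ziv bound then gives $\eta(\mathcal{M}(G_I))\ge\min(|I|,n)$. I would apply the deficiency form with $d=n-1$ and $m=2n-1$: the needed inequality $\min(|I|,n)\ge|I|-(n-1)$ holds for every $I$ (trivially when $|I|\le n$, and when $|I|>n$ because $|I|\le 2n-1$), so there is a rainbow face meeting at least $(2n-1)-(n-1)=n$ of the families. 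That face is a matching of $G$ using distinct edges from $n$ distinct $F_i$, i.e.\ a rainbow matching of size $n$; if it is larger, discarding edges gives the claim.

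The main obstacle, and the point where bipartiteness is essential, is the connectivity input: the clean bound $\eta(\mathcal{M}(G))\ge\nu(G)$ holds for bipartite $G$, whereas in general only $\eta(\mathcal{M}(G))\ge\nu(G)/2$ is available — which is precisely why the analogous statement fails for general graphs (cf.\ Example~\ref{twok4}) and why Theorem~\ref{gendrisko} is weaker. I would therefore take care to verify the exact Aharoni--Berger--Ziv statement and its convention for $\eta$, and to confirm that the cone-point manipulation preserves the hypotheses with the correct off-by-one; granting those, the remaining work is only the elementary inequality above.
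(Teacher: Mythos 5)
First, a point of reference: the paper does not prove Theorem~\ref{strongerdrisko} at all --- it is quoted from \cite{AKZ}, where the argument is combinatorial (an induction via alternating paths, in the spirit of the proof of Drisko's theorem in \cite{AB}). So your route is genuinely different from the source; unfortunately it also contains a fatal error.

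The gap is the connectivity input. The inequality $\eta(\mathcal{M}(G))\ge\nu(G)$ is \emph{false} for bipartite graphs. Take $G=C_4$: then $\nu(G)=2$, but $\mathcal{M}(C_4)$ consists of four vertices and the two disjoint edges $\{e_1,e_3\}$, $\{e_2,e_4\}$, hence is disconnected and has $\eta=1$. More generally, the chessboard complex $\mathcal{M}(K_{n,n})$ has connectivity of order $2n/3$, not $n$, and $\mathcal{M}(C_{2n})=\mathcal{I}(C_{2n})$ likewise has $\eta\approx 2n/3<n=\nu(C_{2n})$ --- and $C_{2n}$ is exactly the graph underlying the extremal Example~\ref{driskosharp}, so your argument fails on the very instance that makes $2n-1$ tight. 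What is actually known (Aharoni--Berger--Ziv) is $\eta(\mathcal{I}(L(G)))\ge\nu^*(G)/2$ for all $G$; bipartiteness only buys $\nu^*=\nu$, not the removal of the factor $2$. Feeding $\eta\ge\nu/2$ into your deficiency computation yields a rainbow matching of size only about $n/2$, far short of $n$; this is precisely why Drisko-type results are not proved by the topological Hall theorem but by direct combinatorial arguments. (Secondarily, your deficiency reduction needs more care: if the $d$ dummy vertices are compatible with everything, every restriction of the enlarged complex becomes contractible, so the hypothesis $\eta\ge|I|-d$ is never used and the argument would ``prove'' false statements unless you track distinctness of representatives correctly. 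But even with a correct deficiency form, the false connectivity bound sinks the proof.) To repair the theorem you should follow the combinatorial route of \cite{AB}/\cite{AKZ}: order the matchings, build a rainbow matching greedily, and when stuck use an alternating-path exchange argument exploiting bipartiteness.
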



\section{Graphs avoiding given induced subgraphs}

In this section we study the function $f_\cc$ for classes of the form $\cc=\cf(H_1, \ldots ,H_m)$.  

 \begin{observation}\label{montonicity2}
Let $H,K$ be graphs. If $H < K$,
then $f_{\cf(H)}(n,n) \leq f_{\cf(K)}(n,n)$.
\end{observation}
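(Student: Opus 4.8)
The plan is to reduce the claimed inequality to a containment of graph classes and then invoke monotonicity of the supremum. The key fact is that the relation $<$ (``contains an induced copy of'') is transitive. Indeed, if $H < K$ and $K < G$, let $W \subseteq V(K)$ induce a copy of $H$, and let $\varphi$ embed $K$ as an induced subgraph of $G$; then $\varphi(W)$ induces a copy of $H$ in $G$, since an induced subgraph of an induced subgraph is itself induced. Hence $H < G$.

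Using this, I would establish the inclusion $\cf(H) \subseteq \cf(K)$. Let $G \in \cf(H)$, so that $H \not< G$. If $G$ were to contain an induced copy of $K$, i.e.\ $K < G$, then transitivity together with the hypothesis $H < K$ would give $H < G$, contradicting the $H$-freeness of $G$. Therefore $K \not< G$, that is, $G \in \cf(K)$.

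Finally, since $f_\cc(n,n) = \sup\{f_G(n,n) \mid G \in \cc\}$ and the supremum of a function over a smaller index set cannot exceed its supremum over a larger one, the inclusion $\cf(H) \subseteq \cf(K)$ immediately gives $f_{\cf(H)}(n,n) \leq f_{\cf(K)}(n,n)$.

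There is essentially no obstacle; the only subtlety worth flagging is the direction of the inclusion. Forbidding the \emph{smaller} graph $H$ is the \emph{stronger} restriction, so $\cf(H)$ is the \emph{smaller} class, and it is precisely this that forces its supremum to be the smaller one. Note also that the argument never uses $m=n$, so the same proof yields $f_{\cf(H)}(n,m) \leq f_{\cf(K)}(n,m)$ for every $m \le n$.
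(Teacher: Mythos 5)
Your argument is correct: the transitivity of the induced-subgraph relation gives $\cf(H)\subseteq\cf(K)$, and the inequality then follows from the monotonicity in \eqref{monotonepartial}, which is exactly the reasoning the paper intends (it states the observation without proof, and later invokes the same containment argument, e.g.\ in the proof of Theorem \ref{t.ramseyequality}). Your remark that the argument extends verbatim to $f_{\cf(H)}(n,m)\le f_{\cf(K)}(n,m)$ for all $m\le n$ is also accurate.
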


The graph $K_{1,3}$ is called a ``claw''. Line graphs are claw-free, and though this does not 
characterize them, many properties of line graphs follow from mere claw-free-ness. This is not the case here.

\begin{theorem}\label{clawfree} Let $t \geq 1$.
\begin{enumerate}[(a)]
    \item $f_{\cf(K_{1,t+1})}(n,m) =m $ for $1 \leq m \leq \left\lceil \frac{n}{t} \right\rceil $.
    \item $f_{\cf(K_{1,t+1})}(n,m)=\infty $ for $m > \left\lceil \frac{n}{t} \right\rceil$.
\end{enumerate}
\end{theorem}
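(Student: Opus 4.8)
The plan is to base everything on the single local feature of the class: if $G$ is $K_{1,t+1}$-free, then for every vertex $v$ the neighbourhood $N(v)$ has independence number at most $t$, since $t+1$ pairwise non-adjacent neighbours of $v$ would together with $v$ induce a copy of $K_{1,t+1}$. Hence for any independent set $I$ we have $|N(v)\cap I|\le t$. Note also that $\cf(K_{1,t+1})$ is closed under induced subgraphs. Since the lower bound $f_{\cf(K_{1,t+1})}(n,m)\ge m$ in (a) is free from \eqref{trivialbound}, the real content of (a) is the matching upper bound $f_{\cf(K_{1,t+1})}(n,m)\le m$, which I would prove by induction on $m$, generalising the claw case $t=2$.

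For the induction, the base case $m=1$ is immediate. For the step, given independent $n$-sets $I_1,\dots,I_m$ in a $K_{1,t+1}$-free graph $G$ with $m\le\lceil n/t\rceil$, I would pick any $v\in I_m$ and delete $N[v]$, setting $I_j'=I_j\setminus N[v]$ for $j<m$. The two cases both yield $|I_j'|\ge n-t$: if $v\in I_j$ then independence of $I_j$ forces $I_j\cap N(v)=\emptyset$, so only $v$ is removed, while if $v\notin I_j$ then $|N(v)\cap I_j|\le t$ by the local property. The sets $I_1',\dots,I_{m-1}'$ are then $m-1$ independent sets of size at least $n-t$ in the induced (still $K_{1,t+1}$-free) subgraph on $V\setminus N[v]$, and the identity $\lceil (n-t)/t\rceil=\lceil n/t\rceil-1\ge m-1$ is exactly what makes the induction hypothesis applicable, producing a rainbow independent $(m-1)$-set $I$. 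Since $I\subseteq V\setminus N[v]$ and $v\in I_m$, the set $I\cup\{v\}$ is the desired rainbow independent $m$-set.

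For (b) I would exhibit, for every $k$, a $K_{1,t+1}$-free graph carrying $k$ independent $n$-sets with no rainbow independent set of size exceeding $\lceil n/t\rceil$. Take the disjoint union of $c=\lceil n/t\rceil$ complete $k$-partite graphs $C_1,\dots,C_c$, where all $k$ parts of $C_\ell$ share a common size $s_\ell\le t$ chosen so that $\sum_\ell s_\ell=n$; this is possible precisely because $c\le n\le ct$. Labelling the $i$-th part of every component by colour $i$ and letting $I_i$ be the union of the colour-$i$ parts, each $I_i$ is independent of size exactly $n$. Since every part has size at most $t$, each $C_\ell$, and hence $G$, is $K_{1,t+1}$-free. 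Finally, a rainbow independent set meets each component in an independent set, which must lie inside one part, i.e.\ one colour class; as a rainbow set uses each colour at most once, it contains at most one vertex per component, hence at most $c=\lceil n/t\rceil$ vertices. So no rainbow independent $m$-set exists for any $m>\lceil n/t\rceil$, giving $f_{\cf(K_{1,t+1})}(n,m)=\infty$.

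I expect the routine facts (closure under induced subgraphs, and that a disjoint union of $K_{1,t+1}$-free graphs stays $K_{1,t+1}$-free because $K_{1,t+1}$ is connected) to be immediate, and the only genuinely delicate point to be the ceiling bookkeeping: checking in (a) that the recursion preserves $m-1\le\lceil(n-t)/t\rceil$ with all sets staying nonempty throughout, and in (b) that part sizes $s_\ell\le t$ can be chosen to sum to exactly $n$ using exactly $c$ components.
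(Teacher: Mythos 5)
Your proposal is correct and follows essentially the same route as the paper: the same induction on $m$ via deleting $N[v]$ for $v\in I_m$ and the ceiling identity $\lceil (n-t)/t\rceil=\lceil n/t\rceil-1$ in part (a), and the same disjoint-union-of-complete-multipartite-graphs construction in part (b). The only (cosmetic) improvement is that you tune the part sizes $s_\ell\le t$ so that each colour class has size exactly $n$, whereas the paper uses all parts of size $t$ and gets classes of size $t\lceil n/t\rceil\ge n$.
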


\begin{proof}
\begin{enumerate}[(a)]
    \item By \eqref{trivialbound} it suffices to prove $f_{\cf(K_{1,t+1})}(n,m) \leq m$. We apply induction on $m$. The base case $m=1$ requires just noting that any vertex in a single $n$-set is a rainbow 1-set. 
    
    For the inductive step, suppose $K_{1,t+1} \not < G$, and let  $I_1, \ldots ,I_m \in \ci_n(G)$. 
Pick a vertex $v \in I_m$.
For every $I_j$, $j < m$, either $v \in I_j$ or $v$ has at most $t$ neighbors in $I_j$.
Consider $I_j' := I_j \setminus N[v]$ for $j < m$.
$I_1', \dots, I_{m-1}' \in \ci(G)$, and have size at least $n-t$, and we note that $m -1 \leq \left\lceil\frac{n-t}{t}\right\rceil$ since $m \leq \left\lceil \frac{n}{t} \right\rceil$.
By the induction hypothesis, they span a rainbow $I \in \ci_{m-1}(G)$.
Since $I \subseteq V \setminus N[v]$, $I \cup \{v\}$ is a rainbow independent $m$-set.

\smallskip

\item
For any $k$ let 
  $G$ consist of  $\lceil \frac{n}{t} \rceil$ copies of $K_{t,t,\dots, t}$, the complete $k$-partite graph with sides of size $t$. Note $K_{1,t+1} \not < G$ as all components have $\alpha(K_{t,t,\dots, t})=t$. Let $I_i, ~i\le k$ consist of the union of all $i$-th independent $t$-tuples in all copies of $K_{t,t,\dots, t}$. Note $|I_i|=t \left\lceil \frac{n}{t} \right\rceil\geq n$. Any rainbow independent set contains at most one vertex from each component $K_{t,t,\dots, t}$, and hence cannot be larger than $\lceil \frac{n}{t} \rceil$.

\end{enumerate}
\end{proof}


When $m=n \geq 3$, $t=2$ gives $f_{\cf(K_{1,3})}(n,n)=\infty$. But $t=1$ gives:
 
\begin{observation}\label{t.disjointcliques}
\[
f_{\cf(K_3^-)}(n,n)=n.
\]
\end{observation}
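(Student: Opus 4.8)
The plan is first to identify the class $\cf(K_3^-)$ concretely. Since $K_3^-$ is the complete graph on three vertices with one edge removed, it is precisely the path $P_3 = K_{1,2}$, and a graph is $K_3^-$-free (as an induced subgraph) exactly when every connected component is a clique, i.e.\ when $G$ is a disjoint union of cliques. This is the standard characterization of $P_3$-free (``cluster'') graphs, and it explains the label of the statement. With this identification in hand, the statement is in fact the special case $t=1$, $m=n$ of Theorem~\ref{clawfree}, since $K_3^- = K_{1,2} = K_{1,t+1}$ with $t=1$ and $\left\lceil n/1 \right\rceil = n$; so one route is simply to cite that theorem. I would nonetheless give the direct argument, since it is short and makes the ``disjoint cliques'' picture transparent.

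The lower bound $f_{\cf(K_3^-)}(n,n) \ge n$ is immediate from \eqref{trivialbound}, as the class contains, for instance, the edgeless graph on $n$ vertices, which has an independent $n$-set. For the matching upper bound, let $G$ be a disjoint union of cliques and let $I_1, \dots, I_n \in \ci_n(G)$. Because an independent set meets each clique in at most one vertex, each $I_i$ meets exactly $n$ distinct cliques. The key reduction is that a rainbow independent $n$-set is exactly a system of distinct representatives of the cliques: if I can choose, for each $i$, a clique $C_{\pi(i)}$ with $I_i \cap C_{\pi(i)} \neq \emptyset$ so that $C_{\pi(1)}, \dots, C_{\pi(n)}$ are pairwise distinct, then selecting $v_i \in I_i \cap C_{\pi(i)}$ produces $n$ vertices lying in pairwise different cliques; these are automatically distinct and pairwise non-adjacent (edges occur only within cliques), hence form an independent rainbow set.

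Thus it suffices to find a matching saturating $\{1,\dots,n\}$ in the bipartite graph joining each index $i$ to the cliques met by $I_i$, and I would invoke Hall's theorem. The Hall condition is verified trivially: for any nonempty $S \subseteq \{1,\dots,n\}$, pick any $i \in S$; then the neighbourhood $N(S)$ already contains the $n$ cliques met by $I_i$, so $|N(S)| \ge n \ge |S|$. Consequently there is no real obstacle in this proof — the entire content lies in the structural identification of $\cf(K_3^-)$ with disjoint unions of cliques, together with the observation that a single independent $n$-set already touches $n$ cliques, which makes Hall's condition automatic.
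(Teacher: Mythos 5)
Your proposal is correct, and its main route is exactly the paper's: the paper derives this observation as the $t=1$, $m=n$ case of Theorem~\ref{clawfree}, precisely as you note ($K_3^-=K_{1,2}$ and $\lceil n/1\rceil=n$). Your supplementary direct argument --- identifying $K_3^-$-free graphs as disjoint unions of cliques and extracting a system of distinct representatives via Hall's theorem --- is also sound, and is essentially the partition-matroid/greedy argument in different clothing (Hall's condition is automatic because each $I_i$ already meets $n$ cliques), so there is nothing to correct.
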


In fact, non-finiteness is the rule, and $K_3^-$ is one of few exceptions. 
The main result of this section is:
\begin{theorem}
\label{t.forbidden_introduction} 
 $f_{\cf(H)}(n,n) < \infty$
 for every positive integer $n$ if and only if $H$ is either $K_r$ or $K_{r}^-$ for some $r$.
\end{theorem}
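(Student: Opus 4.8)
\emph{Overall strategy.} I would prove the two implications separately, and in each case reduce to a handful of extremal graphs using the monotonicity of Observation \ref{montonicity2}. For the ``if'' direction, observe first that $K_r$ is an induced subgraph of $K_{r+1}^-$ (delete a vertex incident to the missing edge), so $f_{\cf(K_r)}(n,n)\le f_{\cf(K_{r+1}^-)}(n,n)$; hence it suffices to prove $f_{\cf(K_r^-)}(n,n)<\infty$ for all $r,n$. For the ``only if'' direction, the point is that $H\notin\{K_r,K_r^-\}$ is equivalent to $\overline H$ having at least two edges, which will let me reduce to three small forbidden graphs.

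\emph{The ``if'' direction and its obstacle.} I would prove $f_{\cf(K_r^-)}(n,n)<\infty$ by induction on $r$, the base case $r=3$ being Observation \ref{t.disjointcliques} (since $K_3^-$-free graphs are exactly disjoint unions of cliques). The driving structural fact is that $G[N(v)]$ is $K_{r-1}^-$-free for every $v$: an induced $K_{r-1}^-$ inside $N(v)$, together with $v$ (which is complete to $N(v)$), would give an induced $K_r^-$. Given many independent $n$-sets $I_1,\dots,I_k$ and a vertex $v$, each $I_j$ splits into a trace $I_j\cap N(v)$ lying in the $K_{r-1}^-$-free graph $G[N(v)]$ and a remainder $I_j\setminus N[v]$. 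I would run a dichotomy: if a large subfamily has large traces, the induction hypothesis applied inside $G[N(v)]$ already yields a rainbow independent $n$-set; otherwise a large subfamily barely meets $N(v)$, and one peels off $v$ and recurses on $G-N[v]$. The main difficulty I expect is quantitative: deleting $N[v]$ erodes the sizes of the surviving $I_j$, and on the diagonal $m=n$ one cannot afford any size loss, so the abundance $k\gg n$ of colour classes must be spent (via a pigeonhole over how colours meet a bounded local structure) to keep enough full-sized classes alive. Note that a purely topological argument through the connectivity of $\ci(G)$ is \emph{not} available: $K_{n,n}\in\cf(K_r^-)$ for $r\ge4$ yet $\ci(K_{n,n})$ is disconnected, so the finiteness is genuinely combinatorial, and closing this induction is the principal obstacle here.

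\emph{The ``only if'' direction: reduction and the easy cases.} Suppose $H$ is neither $K_r$ nor $K_r^-$, so $\overline H$ has at least two edges. Any graph with at least two edges contains an induced $P_3$, $2K_2$, or $K_3$, hence so does $\overline H$, and therefore $H$ contains an induced copy of one of $\overline{P_3}$ (an edge plus an isolated vertex), $C_4=\overline{2K_2}$, or $\overline{K_3}$ (three independent vertices). By Observation \ref{montonicity2} it suffices to show each of these three graphs $H_0$ has $f_{\cf(H_0)}(n,n)=\infty$ for some $n$. Two are immediate. If $H_0=\overline{K_3}$ then $\cf(\overline{K_3})$ is the class of graphs with $\alpha\le2$; taking $K_{2,\dots,2}$ with $k$ parts and the parts as colour classes gives $k$ independent $2$-sets with no rainbow independent $2$-set, so $f_{\cf(\overline{K_3})}(2,2)=\infty$. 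If $H_0=\overline{P_3}$ then $\cf(\overline{P_3})$ is exactly the class of complete multipartite graphs; taking $K_{n,\dots,n}$ with $k$ parts as colours gives, for every $n\ge2$, $k$ independent $n$-sets with no rainbow independent $2$-set (any two vertices in distinct parts are adjacent), so $f_{\cf(\overline{P_3})}(n,n)=\infty$.

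\emph{The $C_4$ case: the crux.} The remaining case $H_0=C_4$ is the heart of the ``only if'' direction and is where I expect the real difficulty. One must exhibit, for some fixed $n$ and arbitrarily large $k$, a $C_4$-free graph carrying $k$ independent $n$-sets with no rainbow independent $n$-set. Such a graph is necessarily non-chordal, since chordal graphs have $f(n,n)=n$ by Theorem \ref{t.chordal rb}, so it must contain a long induced cycle; and the naive templates all fail structurally. Any family of colour classes with all cross-pairs adjacent recreates an induced $C_4$ as soon as two classes have two elements each (a parts construction therefore forces an induced $C_4$), while any construction built from a disjoint union of cliques, or more generally from a clique blow-up, which preserves $C_4$-freeness, places the relevant independent $n$-sets as transversals of disjoint cliques, and such transversals always admit a rainbow system of distinct representatives. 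Thus the $C_4$ gadget cannot be local: it must be a single connected, globally coordinated graph in which every maximum independent set is forced to collide with the colour classes. Constructing this gadget and verifying both $C_4$-freeness and the absence of a rainbow independent $n$-set is the main obstacle of the whole proof; I would look for a recursive or incidence-geometric construction rather than a product of small pieces.
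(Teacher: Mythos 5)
Your reduction in the ``only if'' direction is sound and arguably cleaner than the paper's: passing to $\overline H$ and noting that any graph with at least two edges contains an induced $P_3$, $2K_2$ or $K_3$ reduces everything to the three forbidden graphs $K_3^{--}$, $C_4$ and $\overline{K_3}$ (the paper uses the claw where you use $\overline{K_3}$, via Theorem \ref{clawfree}, but otherwise the skeleton is the same). Your treatments of $K_3^{--}$ and $\overline{K_3}$ by complete multipartite graphs are correct and match the paper. However, the two places where you stop and declare ``the principal obstacle'' are exactly where the content of the theorem lives, and in both cases the paper's resolution is different from anything you sketch, so as written neither direction is actually proved.

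For the $C_4$ case you correctly predict that the gadget must be a single connected, globally coordinated graph, but you do not construct it. The paper's Example \ref{mainconstruction} does so with a circulant graph: $G_{t,n}$ is the cycle $C_{tn}$ with all chords between vertices at distance less than $t$ added. A gap-counting argument shows its only independent $n$-sets are the $t$ congruence classes modulo $t$, so $n-1$ copies of each give $t(n-1)$ colours with no rainbow independent $n$-set, and a short computation with consecutive distances shows $C_4 \not< G_{t,n}$ once $n\ge 4$. For the ``if'' direction, your neighbourhood induction on $r$ (using that $G[N(v)]$ is $K_{r-1}^-$-free, which is a correct observation) runs into exactly the size-erosion problem you identify on the diagonal $m=n$, and I see no way to close it; the paper avoids local surgery entirely. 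It first applies the Erd\H{o}s--Rado sunflower lemma (Theorem \ref{t.sunflower reduction}) to reduce $f_{\cc}(n,n)$ to the disjoint-family invariant $\f_{\cc}(\ell,\ell)$ for $\ell\le n$ --- the shared core $Y$ is what lets one shrink $n$ and $m$ together and so survive the diagonal --- and then applies Ramsey's theorem (Theorem \ref{t.repeating reduction}) to reduce disjoint families to \emph{repeating} grids on $[N]\times[n]$. There the finiteness of $\fr_{\cf(K_r)}(n,n)$ and $\fr_{\cf(K_r^-)}(n,n)$ is a short explicit argument: either some row is independent (hence itself a large rainbow set), or every row is a clique, in which case either the diagonal $\{(a,a)\}$ is a rainbow independent $n$-set or the repeating structure manufactures an induced $K_r^-$. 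These two reductions and the repeating-graph endgame are the missing ideas in your proposal.
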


To connect the theorem to Theorem \ref{gendrisko}, note that line graphs are 
$K_5^-$-free. In fact, $K_5^-$ is one of nine   forbidden subgraphs, including the claw, whose exclusion as induced subgraphs characterizes line graphs - see \cite{beineke}. In this context, let us note two more facts:

\begin{itemize}
\item A graph $H$ is the line graph of a triangle-free graph if and only if $K_{1,3}, K_4^- \not < H$ \cite{Ar}.
\item A graph $H$ is the line graph of a bipartite graph if and only if $K_{1,3}, K_4^-, C_5, C_7, C_9, \dots \not < H$  \cite{He}.
\end{itemize}

We start the proof of Theorem \ref{t.forbidden_introduction}
by showing necessity, namely:\\ 

($\diamondsuit$) If $H$ is neither $K_r$ nor $K_r^-$ for any $r$, then  $f_{\cf(H)}(n,n)= \infty$. \\


We have already shown ($\diamondsuit$) for $H$ being the claw. This will be used below. Next we consider the case of $C_4$.


\begin{lemma}\label{l.C4freeinfinite}
 If $n\ge 4$ then $f_{\cf(C_4)}(n,n)= \infty$.
 \end{lemma}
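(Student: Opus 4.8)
The goal is to produce, for every $k$, a $C_4$-free graph carrying $k$ independent $n$-sets with no partial rainbow independent $n$-set, so that $f_{\cf(C_4)}(n,n)>k$. The first thing to observe is that the gadget used for the claw in Theorem \ref{clawfree} is unavailable here: the complete multipartite graph $K_{t,\dots,t}$ already contains an induced $C_4$ as soon as two parts have size at least $2$, and more fundamentally, \emph{any} mechanism in which every independent set is confined to a single colour class forces the graph to be complete multipartite (two non-adjacent vertices of different colours would be a rainbow independent pair), hence to contain $C_4$. I would likewise discard chordal examples, since by Theorem \ref{t.chordal rb} these satisfy $f(n,m)=m<\infty$; the construction must therefore be non-chordal and $C_4$-free, i.e. assembled from induced cycles of length at least $5$, the prototype being $C_5$, which plays the role for $C_4$-freeness that $K_{2,2}$ plays for the claw (both having $\alpha=2$).

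To make the blocking condition tractable, the plan is to engineer $G$ so that $\alpha(G)=n$. Then every independent $n$-set is a \emph{maximum} independent set, and a partial rainbow independent $n$-set is exactly a maximum independent set $M=\{w_1,\dots,w_n\}$ together with distinct colour indices $c_1,\dots,c_n$ satisfying $w_\ell\in I_{c_\ell}$ for all $\ell$; that is, a system of distinct representatives of $M$ in the colour classes. By Hall's theorem the absence of such a representative system is equivalent to the statement that for every maximum independent set $M$ there is a subset $W\subseteq M$ meeting fewer than $|W|$ colour classes. The cleanest sufficient condition, and the one I would aim for, is to take the colour classes to be pairwise disjoint maximum independent sets arranged so that \emph{every} maximum independent set of $G$ contains two vertices lying in one common colour class; then $M$ spans at most $n-1$ colours and no rainbow independent $n$-set can exist, while the number of colour classes is exactly $k$, which I would drive to infinity.

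For the host graph I would try a rigid $C_4$-free graph with $\alpha=n$ and many maximum independent sets whose vertex-to-colour incidences can be controlled, for instance the clique blow-up $C_{2n+1}[K_t]$, obtained by replacing each vertex of the odd cycle $C_{2n+1}$ by a clique $K_t$ and joining consecutive cliques completely; one checks that this is $C_4$-free (an induced $4$-cycle would have to come from four distinct blobs forming an induced $C_4$ in $C_{2n+1}$, which has none) and has $\alpha=n$, with the parameter $t$ letting the number of vertices and of maximum independent sets grow. Alternatively $G$ could be assembled from a $C_4$-free incidence or Sidon-type structure so that maximum independent sets are forced to repeat a colour, in the parity spirit of Examples \ref{driskosharp} and \ref{neven}. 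The main obstacle is precisely this last, design-theoretic step: selecting the growing family of colour classes so that every maximum independent set is forced to double up on a colour, while keeping the host graph $C_4$-free. The structural remark above shows that the easy ``confinement'' route is genuinely blocked by $C_4$-freeness, so a careful Hall/counting argument—or a probabilistic or algebraic choice of the colour classes—will be needed to certify the blocking as the number of colours tends to infinity.
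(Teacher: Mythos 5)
Your proposal does not reach a construction: it ends by naming the ``design-theoretic step'' of choosing the colour classes as an unresolved obstacle, and that step is precisely where the content of the lemma lies. Worse, the structural remark that steers you is a misreading of what has to be blocked. You argue that confining every independent set to a single colour class is impossible in a $C_4$-free graph because two non-adjacent vertices of different colours would form a rainbow independent \emph{pair} --- but for $m=n\ge 4$ rainbow independent pairs are harmless; only rainbow independent $n$-sets must be excluded. The paper's construction (Example \ref{mainconstruction}) is exactly the ``confinement'' route you discard: take $G_{t,n}$, the $(t-1)$-st power of the cycle $C_{tn}$ (join vertices at cyclic distance $<t$), whose \emph{only} independent $n$-sets are the $t$ congruence classes $I_1,\dots,I_t$ modulo $t$, and repeat each $I_j$ exactly $n-1$ times. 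Any independent $n$-set equals some $I_j$ and therefore meets only the $n-1$ colours that are copies of $I_j$, so no rainbow independent $n$-set exists, while the number of colours $t(n-1)$ is unbounded; $C_4$-freeness for $n\ge 4$ is a short computation with consecutive distances. The device you are missing is repetition of colour classes: the paper does not need disjoint classes, nor does it need every maximum independent set to ``double up'' inside a disjoint colouring --- it only needs a $C_4$-free graph with few independent $n$-sets, each repeated with multiplicity $n-1<n$.

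By insisting on pairwise disjoint colour classes you set yourself a strictly harder problem, and it is not clear your candidate host $C_{2n+1}[K_t]$ can solve it: that graph has $(2n+1)t^n$ maximum independent sets, and you give no argument that a disjoint family of independent $n$-sets can force every one of them to repeat a colour. (Note also that the authors are sensitive to exactly this distinction: they introduce $\f_\cc$ for disjoint families in Definition \ref{def.fdisjoint}, and the proof of Lemma \ref{l.C4freeinfinite} deliberately uses a non-disjoint family.) So the proposal contains correct preliminary observations ($\alpha(G)=n$ is a reasonable normalisation, Hall's condition describes rainbowness, $C_5$-type structures replace $K_{2,2}$), but the key idea and the actual example are absent, and the heuristic that rules out confinement is wrong for the case $m=n$.
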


The proof of the lemma  uses a construction generalizing Example \ref{driskosharp}.



\begin{example}\label{mainconstruction}
For integers $n,t$,
let $G_{t,n}$ be obtained from a cycle of length $tn$ by adding all edges connecting any two vertices of distance smaller than $t$ in the cycle.
There are precisely $t$ independent $n$-sets, say $I_1, \ldots , I_{t}$.
As the family of independent sets, take $n-1$ copies of each $I_j$, yielding $t(n-1)$ colours in total.
(Setting $t=2$ gives Example \ref{driskosharp}.)

\end{example}

Since $\ci_n(G_{t,n})=\{I_1, \dots, I_t\}$ alone, and each $I_j$ repeats only $n-1$ times, $G_{t,n}$ has no rainbow independent $n$-set. Hence the claim will be proved if we show $C_4 \not < G_{t,n}$ for $n \geq 4$.

Suppose, for contradiction, that $x_1,x_2,x_3, x_4$ form an induced $C_4$ in $G_{t,n}$. Then their consecutive distances $d_i:=x_{i+1}-x_{i}$ take values in $\{\pm 1, \pm 2, \dots, \pm(t-1) \}$, since $x_i$ and $x_{i+1}$ are adjacent. Furthermore, $d_i, d_{i+1}$ have the same sign, as otherwise $|x_{i+2}-x_i| =|d_{i+1}-d_i| <t-1$,  contradicting the non-adjacency of $x_i$ and $x_{i+2}$.
Without loss of generality, $d_i>0$ for every $i$.
Then $x_1 \equiv x_2 -d_1 \equiv x_3 - d_2 - d_1 \dots \equiv x_1 - \sum_i d_i \pmod {nt}$. So
$\sum_i d_i$ is a positive multiple of $nt$, hence at least $nt$. But 
\[
\sum_{i=1}^4 d_i \leq \sum_{i=1}^4 (t-1) \leq 4(t-1)<nt,
\]
a contradiction.

%


Thirdly, let $K_3^{--}$ be the graph on three vertices with exactly one edge.
\begin{lemma}\label{lem multipartite}{(Folklore)}
$K_3^{--} \not < H$ if and only if $H$ is a complete $r$-partite graph $K_{s_1, s_2, \dots, s_r}$ for some $r$.
\end{lemma}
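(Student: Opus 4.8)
The plan is to reformulate the condition ``$H$ is complete $r$-partite'' in terms of the non-adjacency relation on $V(H)$. Define a relation $\approx$ by declaring $x \approx y$ whenever $x = y$ or $x \not\sim y$. The key observation I would establish is that $H$ is complete $r$-partite \emph{if and only if} $\approx$ is an equivalence relation. Indeed, in any $K_{s_1,\dots,s_r}$ the relation $\approx$ is precisely ``lying in the same part'', which is an equivalence relation; conversely, if $\approx$ is an equivalence relation, then its classes can serve as the parts, since within a class all pairs are non-adjacent while between distinct classes all pairs are adjacent (the latter because $x \not\approx y$ together with $x \ne y$ forces $x \sim y$). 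So the whole lemma reduces to showing that $K_3^{--} \not< H$ is equivalent to $\approx$ being an equivalence relation.

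For the forward (easy) direction I would argue directly: assume $H = K_{s_1,\dots,s_r}$ and take any three vertices $u,v,w$ with $u \sim v$. Then $u$ and $v$ lie in distinct parts, so the third vertex $w$ lies in a part differing from at least one of them, and is therefore adjacent to at least one of $u,v$. Hence $\{u,v,w\}$ can never induce a copy of $K_3^{--}$ (a triple with exactly one edge), giving $K_3^{--} \not< H$. Reflexivity and symmetry of $\approx$ are immediate from the definition, so this half requires essentially no work beyond unwinding the definitions.

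For the reverse direction I would assume $K_3^{--} \not< H$ and check that $\approx$ is an equivalence relation, where the only substantive point is transitivity. Suppose $x \approx y$ and $y \approx z$ with $x,y,z$ distinct, so $x \not\sim y$ and $y \not\sim z$. If $x \sim z$ held, then $\{x,y,z\}$ would contain exactly one edge and hence induce $K_3^{--}$, contradicting the hypothesis; therefore $x \not\sim z$, i.e. $x \approx z$. Once transitivity is in hand, the equivalence classes of $\approx$ partition $V(H)$ into parts exhibiting $H$ as a complete multipartite graph, which completes the proof.

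The main (indeed essentially the only) obstacle is this transitivity step, which is exactly the point at which the forbidden-subgraph hypothesis is used; everything else is routine bookkeeping about the structure of the classes of $\approx$. I anticipate no genuine difficulty, in keeping with the ``folklore'' designation.
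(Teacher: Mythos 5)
Your proof is correct, and it is exactly the standard folklore argument (non-adjacency, together with equality, is an equivalence relation precisely when $K_3^{--}$ is excluded as an induced subgraph, and its classes are the parts); the paper itself gives no proof, simply citing the statement as folklore. Nothing is missing: the transitivity step is the only place the hypothesis is used, and you handle it correctly.
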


\begin{lemma}
If $f_{\cf(H)}(n,n) < \infty$ for every $n$, then:
\begin{enumerate}[(a)]
    \item\label{aaa} $H$ is claw-free,
    \item\label{bbb} $H$ is $C_4$-free.
    \item\label{ddd} $H$ is $K_3^{--}$-free.
\end{enumerate}
\end{lemma}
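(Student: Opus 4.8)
The plan is to prove the contrapositive of each of the three parts, and in every case to reduce to an already-established infinite value by means of the monotonicity principle of Observation \ref{montonicity2}. That observation asserts $f_{\cf(S)}(n,n) \le f_{\cf(H)}(n,n)$ whenever $S < H$, since forbidding the smaller graph $S$ only shrinks the admissible class. Consequently, to show that $H$ containing a prescribed induced subgraph $S$ forces $f_{\cf(H)}(n,n)=\infty$ for some $n$, it suffices to produce a single value of $n$ with $f_{\cf(S)}(n,n)=\infty$ and then apply the observation with that $S$ and $n$. Since the lemma demands finiteness \emph{for every} $n$, one bad $n$ in each case already contradicts the hypothesis.

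For part (a), suppose $K_{1,3} < H$. Theorem \ref{clawfree}(b), taken with $t=2$ and $m=n$, gives $f_{\cf(K_{1,3})}(n,n)=\infty$ precisely when $n > \lceil n/2 \rceil$, i.e.\ for every $n \ge 3$. Monotonicity then yields $f_{\cf(H)}(n,n)=\infty$ for $n \ge 3$. For part (b), suppose $C_4 < H$. Lemma \ref{l.C4freeinfinite} supplies $f_{\cf(C_4)}(n,n)=\infty$ for all $n \ge 4$, and monotonicity transfers this conclusion to $H$.

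For part (c), suppose $K_3^{--} < H$. Here the plan is to combine Lemma \ref{lem multipartite}, which identifies the $K_3^{--}$-free graphs as exactly the complete multipartite graphs, with the introductory complete-multipartite construction. Concretely, for each $k$ I would take the complete $k$-partite graph with every side of size $n$; its $k$ parts form $k$ independent $n$-sets, and it is $K_3^{--}$-free. Because any two vertices in distinct parts are adjacent, every independent set lies inside a single part, so a rainbow independent set, drawing at most one vertex from each part, can have size at most $1$. Thus these $k$ families possess no rainbow independent $2$-set, hence none of size $n$, for any $n \ge 2$; letting $k \to \infty$ gives $f_{\cf(K_3^{--})}(n,n)=\infty$, and monotonicity finishes the argument as before.

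I do not anticipate a genuine obstacle: the substantive work has already been done in Theorem \ref{clawfree}, Lemma \ref{l.C4freeinfinite}, Lemma \ref{lem multipartite}, and the basic complete-multipartite example, and the present lemma merely routes these through Observation \ref{montonicity2}. The only point meriting a word of care is that the contrapositive needs only one value of $n$ per case to contradict finiteness for every $n$, so the differing thresholds $n \ge 3$, $n \ge 4$, and $n \ge 2$ obtained above are all adequate.
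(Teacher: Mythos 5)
Your proposal is correct and follows essentially the same route as the paper: parts (a) and (b) via Observation \ref{montonicity2} applied to Theorem \ref{clawfree} and Lemma \ref{l.C4freeinfinite}, and part (c) via the complete multipartite graph $K_{n,\dots,n}$ together with Lemma \ref{lem multipartite}. The only (harmless) cosmetic difference is that you spell out the contrapositive bookkeeping and the specific thresholds of $n$, which the paper leaves implicit.
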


\begin{proof}
\eqref{aaa} and \eqref{bbb} follow from Observation~\ref{montonicity2} combined with Theorem~\ref{clawfree} and Lemma~\ref{l.C4freeinfinite}.
For 
\eqref{ddd}, consider the complete $t$-partite graph $G = K_{n, \ldots, n}$.
By Lemma \ref{lem multipartite}, $G$ is
$K_3^{--}$-free.
On the other hand, 
$G$
has $t$ pairwise disjoint independent $n$-sets spanning no rainbow independent $2$-set (let alone an $n$-set).
Choosing $t$ is arbitrarily large proves 
that $f_{\cf(K_3^{--})}(n,n)=\infty$ for every $n\geq2$.
Hence 
\eqref{ddd} follows from Observation~\ref{montonicity2}.
\end{proof}

Part (c) and Lemma \ref{lem multipartite} imply: 

\begin{lemma}
If $f_{\cf(H)}(n,n) < \infty$, then
$H$ is complete multipartite. 
\end{lemma}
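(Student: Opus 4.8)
The plan is to read off the conclusion directly from the two results immediately preceding it. Since the statement asserts only that $H$ is complete multipartite, the entire content is already packaged in part (c) of the previous lemma together with the folklore characterization in Lemma~\ref{lem multipartite}; the proof will be essentially a one-line deduction.

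First I would invoke part~\eqref{ddd} of the preceding lemma: the hypothesis $f_{\cf(H)}(n,n) < \infty$ forces $H$ to be $K_3^{--}$-free, where $K_3^{--}$ is the three-vertex graph with exactly one edge. (Concretely, this is because the complete multipartite graphs $K_{n,\dots,n}$ with arbitrarily many parts are $K_3^{--}$-free by Lemma~\ref{lem multipartite}, yet admit arbitrarily many pairwise disjoint independent $n$-sets spanning no rainbow independent pair; so $K_3^{--} < H$ would give $f_{\cf(H)}(n,n) = \infty$ through the monotonicity of Observation~\ref{montonicity2}.) Then I would apply the ``only if'' direction of Lemma~\ref{lem multipartite}, which states precisely that a graph is $K_3^{--}$-free if and only if it is complete $r$-partite for some $r$. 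Applied to $H$, this yields that $H$ is of the form $K_{s_1,\dots,s_r}$, i.e.\ complete multipartite, which is the assertion.

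There is no genuine obstacle here: all the substantive work has already been carried out upstream, namely the multipartite extremal construction establishing $K_3^{--}$-freeness in part~\eqref{ddd}, and the elementary equivalence (in Lemma~\ref{lem multipartite}) between being $K_3^{--}$-free and having a transitive non-adjacency relation whose classes furnish the parts. The only point worth flagging explicitly is that the single forbidden induced subgraph $K_3^{--}$ already pins down the complete multipartite structure on its own, so that parts~\eqref{aaa} and~\eqref{bbb} (claw-freeness and $C_4$-freeness) are not needed for \emph{this} lemma, even though they will be used later to narrow the complete multipartite graphs down to the $K_r$ and $K_r^-$ of Theorem~\ref{t.forbidden_introduction}.
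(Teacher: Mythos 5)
Your proposal is correct and matches the paper exactly: the paper derives this lemma in one line from part (c) of the preceding lemma ($K_3^{--}$-freeness) combined with Lemma~\ref{lem multipartite}. Your added remark that parts (a) and (b) are not needed here but only later is accurate and consistent with how the paper uses them in the proof of ($\diamondsuit$).
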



\begin{proof}[Proof of ($\diamondsuit$).]
We have shown that if $f_{\cf(H)}(n,n) < \infty$ then $H$ is multipartite, avoiding $C_4$ and the claw as induced subgraphs. $C_4 \not < H$
implies that at most one class in the partition of the graph is of size $2$ or more, and
$K_{1,3} \not < H$ implies that if there is a class of size larger than $1$ it is of size $2$. If there is no such class, $H$ is complete. If there is a single class of size $2$, then $H$ is $K_r^-$. 
\end{proof}

In order to prove the other direction of Theorem \ref{t.forbidden_introduction}, we introduce a variant of $f_\cc(n,m)$.

\begin{definition}\label{def.fdisjoint}
Let $\f_\cc(n,m)$ be the minimal number $k$ such that every $k$ {\em disjoint} independent $n$-sets in any graph belonging to $\cc$ have a partial rainbow independent $m$-set.
\end{definition}
Clearly, $\f$, like $f$, satisfies \eqref{trivialbound} and \eqref{monotonepartial}. It is also clear that  
\begin{equation}\label{eq.sunflower reduction}
\f_{\cc}(n,m) \leq 
    f_{\cc}(n,m).
    \end{equation}

The next theorem establishes equivalence between the finiteness of $f_\C$ for all values of $n,m$ and the finiteness of $f'_\C$ for all values of $n,m$.

\begin{theorem}\label{t.sunflower reduction}
If $m\le n$ then
$$ f_{\cc}(n,m)\le    n!\big(\max_{\ell \leq m}\{\f_{\cc}(n-m+\ell,\ell)\}-1\big)^n.
$$
\end{theorem}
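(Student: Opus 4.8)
The plan is to use the Erd\H{o}s--Rado sunflower lemma to pass from an arbitrary (overlapping) family of independent $n$-sets to a \emph{disjoint} subfamily, to which the definition of $\f_\cc$ applies, and then to recombine. Write $M:=\max_{\ell\le m}\{\f_\cc(n-m+\ell,\ell)\}$, so that the asserted bound is exactly the Erd\H{o}s--Rado threshold $n!(M-1)^n$ for sets of size $n$. Given a family $F_1,\dots,F_k$ of $k=n!(M-1)^n$ independent $n$-sets in some $G\in\cc$, I would apply the sunflower lemma to extract $M$ of them, say indexed by $i_1<\dots<i_M$, forming a sunflower: there is a \emph{core} $C=\bigcap_j F_{i_j}$ such that the \emph{petals} $P_j:=F_{i_j}\setminus C$ are pairwise disjoint. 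Put $c:=|C|$, so each petal has size $n-c$.

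First I would record the two structural facts that make this decomposition useful. Since each $F_{i_j}$ is independent and contains $C$, the core $C$ is independent, every petal $P_j$ is independent, and, crucially, there is no edge between $C$ and any $P_j$ (because $C\cup P_j=F_{i_j}$ is independent). Hence $P_1,\dots,P_M$ are $M$ pairwise disjoint independent $(n-c)$-sets in $G$, exactly the kind of input governed by $\f_\cc$. Next, set $\ell:=m-c$. In the main case $\ell\ge 1$ (so $c\le m-1\le n-1$ and the petals are nonempty) one has $n-c=n-m+\ell$, so the $M$ disjoint independent $(n-m+\ell)$-sets $P_1,\dots,P_M$ number at least $\f_\cc(n-m+\ell,\ell)$; by the definition of $\f_\cc$ they span a partial rainbow independent $\ell$-set $R$. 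Then $C\cup R$ is independent (no core--petal edges, and $R$ lies in the petals) and has size $c+\ell=m$. The degenerate case $\ell\le 0$, i.e.\ $c\ge m$, is easier: $C$ is itself an independent set of size at least $m$, and any $m$ of its vertices will do.

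It remains to exhibit $C\cup R$ as a genuinely \emph{rainbow} set, and this colour bookkeeping is the step I expect to be the main obstacle. The set $R$ uses $\ell$ distinct indices from $\{i_1,\dots,i_M\}$, while every vertex of $C$ lies in \emph{all} of $F_{i_1},\dots,F_{i_M}$ and so may legitimately be assigned any of these indices. By \eqref{trivialbound} applied to $\f_\cc$ (taking $\ell=m$ in the definition of $M$) we have $M\ge\f_\cc(n,m)\ge m$, so there remain at least $M-\ell\ge c$ unused indices to distribute, one to each of the $c$ vertices of $C$. This assignment certifies $C\cup R$ as a rainbow independent $m$-set, completing the main case; the same $M\ge m$ bound handles the degenerate case directly.

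A secondary nuisance is that $F_1,\dots,F_k$ may contain repeated sets, whereas the sunflower lemma is phrased for distinct sets. I would dispose of this by first checking whether a single independent $n$-set carries $M$ or more colours: if so, $M\ge m$ colours on one $n$-set already yield a rainbow independent $m$-set; otherwise one applies the lemma to the distinct underlying sets. Managing the interaction between multiplicities and the exact Erd\H{o}s--Rado constant is the only place where care with the $\pm1$ in the threshold is needed, consistent with the trivial inequality \eqref{eq.sunflower reduction}.
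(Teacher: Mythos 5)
Your argument is correct and follows essentially the same route as the paper's proof: extract a sunflower via the Erd\H{o}s--Rado lemma, apply $\f_{\cc}$ to the pairwise disjoint petals to obtain a rainbow independent $\ell$-set, and then colour the $c=m-\ell$ core vertices with the $M-\ell\ge c$ unused indices, exactly as the paper does. Your closing remark about repeated sets flags a point the paper silently ignores, but the patch you sketch does not quite recover the stated constant (passing to the distinct underlying sets can shrink the family by a factor of up to $M-1$, dropping it below the Erd\H{o}s--Rado threshold for $M$ petals); this affects only the constant, not the finiteness statement for which the theorem is used.
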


 For the proof, we  recall some notions from Ramsey theory.

A {\em sunflower} is a collection of sets $S_1,\ldots, S_k$ with the property that, for some set $Y$, $S_i \cap S_j = Y$ for every pair $i \neq j$.
The set $Y$ is called the {\em core} of the sunflower and the sets $S_i \setminus Y$ are called {\em petals}.
In particular, a collection of pairwise disjoint sets is a sunflower with $Y = \emptyset$.

\begin{lemma}[Erd\H{o}s-Rado Sunflower Lemma,  \cite{ER}]\label{sunflower lemma}
Any collection of  $n! (k-1)^n$ sets of cardinality $n$ contains a sunflower with $k$ petals.
\end{lemma}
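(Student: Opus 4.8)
The plan is to prove the lemma by induction on the common cardinality $n$ of the sets. The engine of the argument is a dichotomy: either many of the sets in the collection are already pairwise disjoint, in which case they directly constitute a sunflower with empty core, or they are not, in which case a small set of elements meets all of them and hence, by pigeonhole, some single element is shared by a large subfamily, to which the inductive hypothesis can be applied after deleting that element.

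For the base case $n=1$ the members are distinct singletons, so any $k$ of them are pairwise disjoint and thus form a sunflower with empty core and $k$ petals. For the inductive step, let $\F$ be a collection of $n!(k-1)^n$ sets, each of cardinality $n$, and choose a subfamily $T_1, \dots, T_t \in \F$ that is pairwise disjoint and maximal with respect to this property. If $t \ge k$, then $T_1, \dots, T_k$ already form a sunflower with empty core and we are done. Otherwise $t \le k-1$, so the union $Y = T_1 \cup \dots \cup T_t$ has at most $n(k-1)$ elements, and by maximality every member of $\F$ meets $Y$ (a set disjoint from $Y$ could be appended to the chosen subfamily, contradicting maximality).

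Since each of the $n!(k-1)^n$ sets contains at least one of the at most $n(k-1)$ elements of $Y$, pigeonhole produces an element $x \in Y$ lying in at least
\[
\frac{n!(k-1)^n}{n(k-1)} = (n-1)!\,(k-1)^{n-1}
\]
sets of $\F$. I would then pass to the collection $\F' = \{\, S \setminus \{x\} : S \in \F,\ x \in S \,\}$ of sets of cardinality $n-1$, of which there are at least $(n-1)!\,(k-1)^{n-1}$. The induction hypothesis yields a sunflower $S_1 \setminus \{x\}, \dots, S_k \setminus \{x\}$ with $k$ petals inside $\F'$; reinserting $x$ into each member shows that $S_1, \dots, S_k$ form a sunflower in $\F$ with the same petals and with $x$ adjoined to the core.

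The step I expect to require the most care is matching the counting exactly to the stated bound. The inductive propagation is clean precisely because the division by $n(k-1)$ peels off one factor of $(k-1)$ and lowers $n!$ to $(n-1)!$, but one must check that the popular element $x$ survives this division with enough surviving sets to legitimately invoke the hypothesis for $n-1$, and one must be careful at the boundary so that the number of sets $n!(k-1)^n$ (rather than strictly more) still forces $k$ petals rather than only $k-1$; this is the only genuinely delicate point, the rest being the routine bookkeeping of the disjoint-versus-popular dichotomy above.
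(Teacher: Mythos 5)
The paper offers no proof of this lemma at all --- it is quoted directly from Erd\H{o}s--Rado \cite{ER} --- so there is no internal argument to compare against. What you give is the classical Erd\H{o}s--Rado induction (take a maximal pairwise disjoint subfamily; if it is small, its union $Y$ meets every member, pigeonhole produces a popular element $x$, delete $x$ and recurse), and that engine is the right one and essentially sound.

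The one genuine problem is exactly the boundary issue you flagged and left unresolved, and it cannot be patched: with the constant $n!\,(k-1)^n$ taken at face value the statement fails at $n=1$, since $1!\,(k-1)^1 = k-1$ distinct singletons contain no sunflower with $k$ petals, so your base case ``any $k$ of them are pairwise disjoint'' presumes more sets than the hypothesis supplies. The correct form of the lemma (and the one Erd\H{o}s and Rado prove) requires \emph{strictly more than} $n!\,(k-1)^n$ sets, and with that reading your induction goes through verbatim: from $|\F| > n!\,(k-1)^n$, pigeonhole over the at most $n(k-1)$ elements of $Y$ yields an $x$ lying in strictly more than $(n-1)!\,(k-1)^{n-1}$ members, which is precisely the inductive hypothesis for $n-1$ (note also that distinctness of sets is preserved in passing to $\F'$, since all sets involved contain $x$, so the recursion is legitimate). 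In short: your argument is the standard correct proof of the correctly stated lemma, and the defect is an off-by-one inherited from the paper's loose phrasing rather than from your reasoning; it is harmless downstream, since Theorem \ref{t.sunflower reduction} invokes the lemma with enormous slack, but as written your base case does not establish the literal claim.
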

It is worth remarking that the number of sets needed has been reduced in \cite{kostochka}, and again (very recently) in \cite{alwz}. The {\em sunflower conjecture}, stating that $c_k^n$ sets of cardinality $n$ may suffice, is still open. 



\begin{proof}[Proof of Theorem~\ref{t.sunflower reduction}]

For $a \ge b$ let $N_a^b=\max (\f_\cc(a,b),m)$. Let $N=\max \{N_{n-m+\ell}^{\ell} :
\ell \leq m \}$.

We proceed to prove that $f_\cc(n,m) \leq n!(N-1)^n$, which implies the theorem since \eqref{trivialbound} gives $m \leq \f_\cc(n,m) \leq N$.
Let $I_1, \dots, I_{n!(N-1)^n} \in \ci_n(G)$. By the sunflower lemma, some
$N$
of them form a sunflower $S_1, \dots, S_{N}$, say with core $Y$. If $\ell \leq n
$ is the size of the resulting petals $S_i \setminus Y$, then these
$S_i \setminus Y \in \ci_\ell(G)$ and are pairwise disjoint.
Plus, there are $N \geq N_\ell^{m-n+\ell}$ of them.

So applying 
the definition of $N_\ell^{n-m+\ell}=\f_\cc(\ell,m-n+\ell)$
to the induced graph $G[\cup_i (S_i \setminus Y)]$ gives a rainbow independent set $I$ of size $m-n+\ell$ among these petals $S_i \setminus Y$.
But extending $I$ to $I \cup Y$ also produces an independent set, now of size $m$, as the core $Y$ is nonadjacent to all vertices in the sunflower.
The additional $n -\ell$ vertices in $Y$ can all be assigned distinct new colours not used in $I$, since $N_\ell^{n-m+\ell}-|I| \geq m-(m-n+\ell)=n-\ell$ 
 and $Y$ is contained in every $S_i$. So $I \cup Y$ is rainbow, as desired.
\end{proof}

%

In what follows, we allow digraphs to have loops and digons, but not parallel edges.

\begin{definition}
Let $\Gamma$ be a bipartite graph on vertex set $\{a,a'\} \times B$, whose parts are the two columns $\{a\} \times B$ and $\{a'\} \times B$.
Write $D(a,a')$ for the digraph on vertex set $B$, whose edges are given by
\[
bb' \in D(a,a') \Leftrightarrow (a,b) \sim (a',b') \text{ in } E(\Gamma).
\]
\end{definition}
For example:
\begin{center}
\begin{tikzpicture}

\draw [gray](0,0) -- (2,1)--(0,1) -- (2,2)--(0,0);

\draw [ultra thick, ->] (5,0) to [out=135,in=270] (4.8,0.5);
\draw [ultra thick] (4.8,0.5) to [out=90,in=225] (5,1);
\draw [ultra thick, ->] (5,1) to [out=135,in=270] (4.8,1.5);
\draw [ultra thick] (4.8,1.5) to [out=90,in=225] (5,2);

\draw [ultra thick, ->] (5,0) to [out=45, in =270] (5.5,1);
\draw [ultra thick] (5.5,1) to [out=90, in=315] (5,2);

\draw [ultra thick, ->] (5,1) to [out=135, in=45] (4.5,1);
\draw [ultra thick] (4.5,1) to [out=225, in=225] (5,1);

\filldraw (0,0) circle [radius=0.05];
\filldraw (0,1) circle [radius=0.05];
\filldraw (0,2) circle [radius=0.05];

\filldraw (2,0) circle [radius=0.05];
\filldraw (2,1) circle [radius=0.05];
\filldraw (2,2) circle [radius=0.05];

\filldraw (5,0) circle [radius=0.05];
\filldraw (5,1) circle [radius=0.05];
\filldraw (5,2) circle [radius=0.05];

\node [below] at (1,0) {$\Gamma$};

\node [above] at (0,2) {$a$};
\node [above] at (2,2) {$a'$};

\node [below] at (5,0) {$D(a,a')$};

\node [left] at (0,0) {$b_1$};
\node [left] at (0,1) {$b_2$};
\node [left] at (0,2) {$b_3$};

\node at (3.5,1) {$\Rightarrow$};

\end{tikzpicture}
\end{center}

\begin{definition}
Let $A$ and $B$ be finite ordered sets with $|A| = N$.
Let $G$ be an $N$-partite graph on $A \times B$ whose parts are columns $\{a\} \times B$, and let $D$ be a digraph on $B$.
We say that $G$ is {\em repeating}, or {\em $D$-repeating}, if for every $a<a'$ the digraph $D(a,a')$ is the same digraph $D$ on $B$.
Equivalently:

For every $b_1, b_2$ in $B$ (not necessarily distinct) and two pairs $a_1 < a_2$ and $a_1' < a_2'$ in $A$, the vertices $(a_1, b_1)$ and $(a_2, b_2)$ are adjacent if and only if the vertices $(a_1', b_1)$ and $(a_2', b_2)$ are adjacent.
We say that $G$ is {\em strongly repeating} if $D$ has a loop at every vertex, so that all rows $G[A \times \{b\}]$ are cliques.
\end{definition}

Note that every row $A \times \{b\}$ is either a clique or an independent set in a $D$-repeating graph $H$ on $A \times B$, depending on whether $D$ has a loop at vertex $b$ or not.


Next recall that $R(r_1, \dots, r_c)$ denotes the smallest number of vertices in a complete graph for which any $c$-edge-colouring contains in some colour $i$ a monochromatic $K_{r_i}$.  Ramsey's theorem guarantees the existence of such a number.

\begin{lemma}\label{l.ramsey}
For every pair  $n, N$ of integers there is an $R$ with the following property.
Suppose $G$ is an $R$-partite graph on $[R] \times [n]$, whose parts are the $R$ columns.
Then there is some $A \subset [R]$ with $|A| =N$ for which the induced subgraph $G[A \times [n]]$ is repeating. 
\end{lemma}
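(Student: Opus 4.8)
The plan is to treat this as a direct application of the multicolour Ramsey theorem, viewing the columns $[R]$ as the vertices of a complete graph and colouring each pair of columns by the cross-adjacency pattern between them. First I would observe that, for a pair $a < a'$ of columns, the entire record of which vertices of column $a$ are adjacent to which vertices of column $a'$ is encoded exactly by the digraph $D(a,a')$ on $[n]$. Since $G$ is a fixed undirected graph, this is a well-defined digraph on $[n]$ for each such pair, and the induced subgraphs $G[\{a,a'\}\times[n]]$ and $G[\{a'',a'''\}\times[n]]$ have the same cross-adjacency pattern precisely when $D(a,a')=D(a'',a''')$. Note also that each column $\{a\}\times[n]$ is independent in $G$, so any induced subgraph $G[A\times[n]]$ is automatically $|A|$-partite with the columns as parts; thus the only content of ``repeating'' that remains to be secured is the digraph condition.

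Next I would bound the number of colours. A digraph on the vertex set $[n]$ that is allowed loops and digons but no parallel edges is determined by independently deciding, for each of the $n^2$ ordered pairs $(b,b')\in[n]\times[n]$ (the diagonal recording loops), whether $bb'$ is an edge. Hence there are exactly $c:=2^{n^2}$ such digraphs, a finite number depending only on $n$. Assigning to each unordered pair $\{a,a'\}$ of columns, taken with $a<a'$, the colour $D(a,a')$ therefore gives a $c$-edge-colouring of the complete graph on vertex set $[R]$; the order on $[R]$ removes any ambiguity in the orientation used to define the digraph.

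I would then set $R:=R(\underbrace{N,\ldots,N}_{c})$, the $c$-colour Ramsey number guaranteeing a monochromatic $K_N$, which exists by Ramsey's theorem and depends only on $n$ and $N$. Any $c$-colouring of the edges of $K_R$ contains a monochromatic clique on some vertex set $A$ with $|A|=N$. For this $A$, every pair $a<a'$ receives the same colour, that is $D(a,a')=D$ for one fixed digraph $D$ on $[n]$. By the definition of repeating, $G[A\times[n]]$ is then $D$-repeating, which is exactly what is claimed.

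The argument is essentially complete once these pieces are assembled, so I expect no serious obstacle: the whole difficulty is packaged into the existence of the finite Ramsey number. The only points demanding genuine care are verifying that $D(a,a')$ faithfully captures the induced subgraph on the two columns, so that equal colours really do yield identical cross-patterns, and that the colouring is consistently oriented using the order on $[R]$. Everything else is the standard pigeonhole guarantee supplied by $R(N,\ldots,N)$.
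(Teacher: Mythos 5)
Your proposal is correct and is essentially the same argument as the paper's: colour each pair $a<a'$ of columns of $K_R$ by the digraph $D(a,a')$, note there are $2^{n^2}$ possible colours, and take $R$ to be the $2^{n^2}$-colour Ramsey number $R(N,\dots,N)$ so that a monochromatic $K_N$ yields the repeating set $A$. You merely spell out the bookkeeping (the count of digraphs and the orientation convention) in more detail than the paper does.
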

\begin{proof}
We colour the edges of $K_R$ by assigning to each pair $a<a'$ the digraph $D(a,a')$. There are  $2^{n^2}$ such possible choices, and hence
\[
R:=R(\overbrace{N, \dots, N}^{2^{n^2}}).
\]

does the job.
\end{proof}

We can use this to specialise Definition \ref{def.fdisjoint} even further.
\begin{definition}\label{def.fr}
Let $\fr_\cc(n,m)$ be the minimal $k$ such that any {\em repeating} graph on $[k] \times [n]$ in $\cc$ has an independent $m$-set which is rainbow with respect to the $k$ columns $(\{a\} \times [n])_{a=1}^k$.
\end{definition}

As with $f$ and $\f$, $\fr$ also satisfies \eqref{trivialbound} and \eqref{monotonepartial}, and 
\begin{equation}\label{eq.repeating reduction}
            \fr_\cc(n,m) \leq \f_\cc(n,m).
\end{equation}

Lemma \ref{l.ramsey} yields:
\begin{theorem}\label{t.repeating reduction}
$$
 \f_\cc(n,m)
    \leq R(\underbrace{\fr_\cc(n,m), \dots, \fr_\cc(n,m)}_{2^{n^2}}).
    $$
\end{theorem}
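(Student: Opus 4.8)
The plan is to deduce Theorem~\ref{t.repeating reduction} directly from Lemma~\ref{l.ramsey}, treating the Ramsey lemma as the engine that converts an arbitrary collection of disjoint independent $n$-sets into one that is \emph{repeating}. Set $k:=\fr_\cc(n,m)$ and $R:=R(\underbrace{k,\dots,k}_{2^{n^2}})$, the Ramsey number supplied by Lemma~\ref{l.ramsey} for the pair $(n,N)$ with $N=k$. The goal is to show $\f_\cc(n,m)\le R$; that is, any $R$ disjoint independent $n$-sets $I_1,\dots,I_R$ in a graph $G\in\cc$ admit a partial rainbow independent $m$-set.

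First I would encode the $R$ disjoint independent sets as a single $R$-partite graph. Since the $I_a$ are disjoint $n$-sets, I can fix an ordering on each and identify $I_a$ with the column $\{a\}\times[n]$, so that $\bigcup_a I_a$ becomes the vertex set $[R]\times[n]$ of an $R$-partite graph $G'=G[\bigcup_a I_a]$ whose parts are exactly the columns. Here I use that each $I_a$ is independent, so each column is an independent set in $G'$, which is consistent with the $R$-partite structure. Crucially, $G'$ is an induced subgraph of $G$, hence $G'\in\cc$ provided $\cc$ is closed under taking induced subgraphs (this closure is implicit throughout the section, since all the classes considered are induced-subgraph-closed, and a rainbow independent set in $G'$ is automatically one in $G$).

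Next I apply Lemma~\ref{l.ramsey} to $G'$ with the chosen $R$: it produces a subset $A\subseteq[R]$ with $|A|=N=k$ such that the induced subgraph $G'[A\times[n]]$ is \emph{repeating}. This induced subgraph is again in $\cc$, it lives on $A\times[n]\cong[k]\times[n]$ after relabelling the elements of $A$ in increasing order, and it is repeating. By the very definition of $\fr_\cc(n,m)$ (Definition~\ref{def.fr}), since $|A|=k=\fr_\cc(n,m)$, the repeating graph $G'[A\times[n]]$ has an independent $m$-set that is rainbow with respect to the $k$ columns $\{a\}\times[n]$, $a\in A$. But a set rainbow across these columns is precisely a partial rainbow independent $m$-set for the original family $(I_a)_{a\in A}$, hence for $(I_1,\dots,I_R)$. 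This establishes $\f_\cc(n,m)\le R$, which is the claimed bound.

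I do not expect a serious obstacle here, as the statement is essentially a bookkeeping assembly of two previously proved facts; the only points demanding care are the two implicit uses of closure of $\cc$ under induced subgraphs (needed so that both $G'$ and the repeating subgraph stay in $\cc$), and the identification of ``rainbow across columns'' in Definition~\ref{def.fr} with ``partial rainbow independent $m$-set'' in Definition~\ref{def.fdisjoint}. The mild subtlety worth flagging is that Lemma~\ref{l.ramsey} is stated for an $R$-partite graph \emph{on} $[R]\times[n]$, so I must make sure the disjointness and common size $n$ of the $I_a$ genuinely let me present the family in exactly that form; with disjoint $n$-sets this is immediate, which is exactly why the reduction is phrased for $\f_\cc$ (disjoint sets) rather than $f_\cc$.
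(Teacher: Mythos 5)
Your argument is correct and is exactly the derivation the paper intends: the paper states the theorem with only the words ``Lemma \ref{l.ramsey} yields,'' and your write-up simply fills in that reduction --- presenting the disjoint $n$-sets as the columns of an $R$-partite graph, extracting a repeating induced subgraph on $\fr_\cc(n,m)$ columns via Lemma \ref{l.ramsey}, and invoking Definition \ref{def.fr}. The two points you flag (closure of $\cc$ under induced subgraphs, and the identification of column-rainbow sets with partial rainbow sets for the family) are indeed the only implicit assumptions, and they hold for every class considered in the paper.
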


We are now in a position to prove the main result of this section.

\begin{proof}[Proof of Theorem \ref{t.forbidden_introduction}]
The ``only if'' direction has already been shown in 
($\diamondsuit$).
For the ``if'' direction, we show for $n \ge 2$ that
\[
\fr_{\cf(K_r)}(n,n) =\max(n,r), \text{ and }
\fr_{\cf(K_r^-)}(n,n) = \max (n,r-1).
\]
Since both values above are  $0,1$ when $n=0,1$, respectively.
the result will then follow from Theorems \ref{t.sunflower reduction} and \ref{t.repeating reduction}.

Let us first prove
that $\fr_{\cf(K_r)}(n,n)\le \max(n,r)$ 
 and $\fr_{\cf(K_r^-)}(n,n) \le \max (n,r-1)$. 
Let $G$ be a repeating graph on $A \times [n]$. 

 If $G \in \cf(K_r)$, the first row $A\times \{1\}$ of $G$ is a rainbow independent set, provided $|A| \geq r$.

If $G \in \cf(K_r^-)$,
then again either some row $A \times \{b\}$ is empty, and hence is a rainbow independent set of size $\ge n$, or else all such are cliques, namely $G$ is strongly repeating.
Then the diagonal
\[
\{(a,a): 1 \leq a \leq n\}
\]
is a rainbow independent $n$-set. Otherwise, if some pair $a' < a$ have $(a',a')$ and $(a,a)$ adjacent in $G$, then $(a',1) \sim (a,b)$ for every $b \geq 2$ by the repeating property (see Figure \ref{fig.K_5^-}). Then $K_r^- < G$ as witnessed by
\[
\{(a',1)\} \cup \{(a,b):1\leq b \leq r-1\},
\]
where $(a',1) \not\sim (a,1)$ is the missing edge-a contradiction.

\begin{figure}
\begin{center}
    \begin{tikzpicture}[scale=2]
\draw [gray] (1,2)--(0,1)--(2,2)--(1,1)--(3,2)--(2,1);
\draw [gray] (0,1)--(3,2);
\draw[gray]
(1,0)--(0,0)
to [out=345,in=205](2,0)--(1,0)
to [out=345,in=205](3,0)--(2,0);
\draw [gray] (0,0) to [out=345,in=205] (3,0);

\draw[gray]
(1,1)--(0,1)
to [out=345,in=205](2,1)--(1,1)
to [out=345,in=205](3,1)--(2,1);
\draw [gray] (0,1) to [out=345,in=205] (3,1);

\draw[ultra thick]
(1,2)--(0.05,2)
to [out=15,in=165](2,2)--(1,2)
to [out=15,in=165](2.95,2)--(2,2);
\draw [ultra thick] (0.05,2) to [out=15,in=165] (2.95,2);

\draw [thick] (1,1)--(2,2);

\draw [ultra thick](0,1)--(1,2);
\draw [ultra thick](0,1)--(2,2);
\draw [ultra thick](0,1)--(3,2);

\node(00) at (0,0){};
\node(01) at (0,1){};
\node(02) at (0,2){};

\node(10) at (1,0){};
\node(11) at (1,1){};
\node(12) at (1,2){};

\node(20) at (2,0){};
\node(21) at (2,1){};
\node(22) at (2,2){};

\node(30) at (3,0){};
\node(31) at (3,1){};
\node(32) at (3,2){};

\filldraw (00) circle [radius=0.05];
\filldraw (01) circle [radius=0.05];
\filldraw (02) circle [radius=0.05];

\filldraw (10) circle [radius=0.05];
\filldraw (11) circle [radius=0.05];
\filldraw (12) circle [radius=0.05];

\filldraw (20) circle [radius=0.05];
\filldraw (21) circle [radius=0.05];
\filldraw (22) circle [radius=0.05];

\filldraw (30) circle [radius=0.05];
\filldraw (31) circle [radius=0.05];
\filldraw (32) circle [radius=0.05];

\draw [rounded corners=14pt]
(-0.3,-0.3) rectangle (0.3,2.3);
\draw [rounded corners=14pt]
(0.7,-0.3) rectangle (1.3,2.3);
\draw [rounded corners=14pt]
(1.7,-0.3) rectangle (2.3,2.3);
\draw [rounded corners=14pt]
(2.7,-0.3) rectangle (3.3,2.3);

\draw[dashed, gray, thick, rounded corners=10pt
,rotate=45
] (-0.2,-0.2) rectangle (3.05,0.2);

    \end{tikzpicture}
\caption{$K_5^-$ in a strongly repeating graph where $(2,2) \sim (3,3)$.
\label{fig.K_5^-}}
\end{center}
\end{figure}
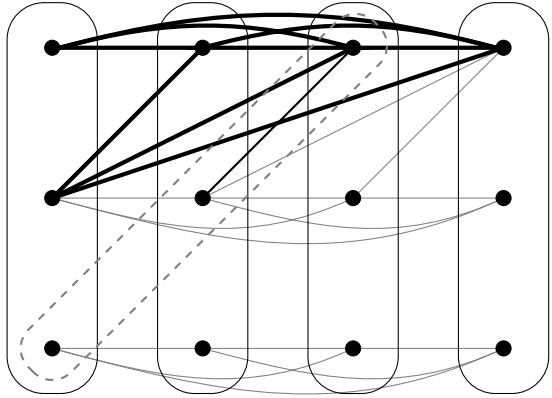

To prove the corresponding lower bounds, we show separately that 
$\fr_{\cf(K_r^-)}(n,n) \geq r-1$ and $\geq n$ ($\fr_{\cf(K_r)}(n,n) \geq \max\{n,r\}$ follows similarly). 
The first is witnessed by the complete $(r-2)$-partite graph with $n$ vertices in each part.
This is repeating, $K_r^-$-free, and
has no rainbow independent $n$-set
provided $n\geq 2$.
The second follows from \eqref{trivialbound} for $\fr$.  

\end{proof}

To complete this section, we use the Ramsey numbers $R(s,t)$ to classify $f_{\cf(K_{r})}$ and find a nontrivial lower bound on $f_{\cf(K_{r+1}^-)}$.

\begin{theorem}\label{t.ramseyequality}
For any numbers $r$ and $m \leq n$:
 \[R(r,m) = f_{\cf(K_r)}(n,m) \leq f_{\cf(K_{r+1}^-)}(n,m).\]
\end{theorem}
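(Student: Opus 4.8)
The plan is to establish the equality $R(r,m)=f_{\cf(K_r)}(n,m)$ by proving the two inequalities separately, and then to deduce the final inequality $f_{\cf(K_r)}(n,m)\le f_{\cf(K_{r+1}^-)}(n,m)$ from monotonicity. For the latter I first note that $K_r<K_{r+1}^-$: deleting an endpoint of the missing edge of $K_{r+1}^-$ leaves an induced $K_r$. Since $H<K$ forces every $H$-free graph to be $K$-free, we get the class inclusion $\cf(K_r)\subseteq\cf(K_{r+1}^-)$, and hence $f_{\cf(K_r)}(n,m)\le f_{\cf(K_{r+1}^-)}(n,m)$ by taking suprema over the two nested classes (this is the $(n,m)$-analogue of Observation~\ref{montonicity2}, whose proof is exactly this inclusion).

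For the lower bound $f_{\cf(K_r)}(n,m)\ge R(r,m)$, I would blow up a Ramsey-extremal graph. Let $N=R(r,m)-1$ and fix a graph $W$ on $[N]$ with no clique of size $r$ and no independent set of size $m$ (such $W$ exists by the definition of $R(r,m)$). Replace each vertex $i$ of $W$ by an independent set $I_i$ of size $n$, joining $I_i$ and $I_j$ completely when $ij\in E(W)$ and leaving them non-adjacent otherwise. Since each $I_i$ is independent, a clique of the blow-up meets each $I_i$ at most once and so projects to a clique of $W$; thus the blow-up is $K_r$-free. Likewise, the set of indices $i$ for which an independent set of the blow-up meets $I_i$ is independent in $W$; since $\alpha(W)<m$, no independent set uses $m$ distinct colours. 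Hence these $N=R(r,m)-1$ classes $I_1,\dots,I_N$ admit no rainbow independent $m$-set, giving the bound.

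For the upper bound $f_{\cf(K_r)}(n,m)\le R(r,m)$, let $G$ be $K_r$-free and $I_1,\dots,I_k\in\ci_n(G)$ with $k=R(r,m)$. I would process the colours $1,\dots,k$ in order, greedily building a set $W$ of distinct vertices and tagging each newly added vertex with the colour that contributed it: when processing colour $i$, add some $v\in I_i\setminus W$ tagged $i$ if one exists, and otherwise skip $i$. By construction the vertices of $W$ are distinct and carry distinct colour tags with $v\in I_{\mathrm{tag}(v)}$. The argument then splits on whether any colour is skipped. If no colour is skipped, then $|W|=k=R(r,m)$, so the $K_r$-free induced subgraph $G[W]$ contains an independent $m$-set by the definition of $R(r,m)$, and its distinct tags make it a rainbow independent $m$-set. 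If some colour $i$ is skipped, then $I_i\subseteq W$ at that stage, so the $n\ge m$ vertices of $I_i$ already lie in $W$ with distinct tags; any $m$ of them form an independent set (being a subset of $I_i$) that is rainbow via its tags.

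The main obstacle is precisely the overlap between the colour classes in the upper bound: one cannot in general select distinct representatives from all $R(r,m)$ classes (for instance if the classes coincide), so a naive ``pick one vertex per class and apply Ramsey'' fails. The greedy skipping mechanism is what resolves this dichotomy cleanly — collisions are not an obstruction but a windfall, since a class swallowed by the already-committed pool instantly supplies a rainbow independent $m$-set through the distinct tags of its vertices. Apart from this point the argument is routine, relying only on the definition of $R(r,m)$ and on $K_r$-freeness being inherited by induced subgraphs.
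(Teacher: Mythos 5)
Your proposal is correct and follows essentially the same route as the paper: the lower bound via blowing up a Ramsey-extremal $K_r$-free graph with independence number below $m$, the upper bound via a maximal (in your case, greedily built) rainbow set with the same dichotomy — either all colours are represented and Ramsey's theorem applies to the $K_r$-free induced subgraph, or a skipped colour class is entirely absorbed and is itself a rainbow independent $n$-set — and the final inequality from monotonicity of $\cf(K_r)\subseteq\cf(K_{r+1}^-)$. The greedy tagging is just an explicit implementation of the paper's inclusion-maximal rainbow set $M$.
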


\begin{proof}
We first find a graph $G$ showing $f_{\cf(K_r)}(n,m) > N:= R(r,m)-1$ (see Figure \ref{fig.ramseyrainbowexample}).
Take a $K_r$-free graph $H$ on $N$ vertices with $\alpha(H)<m$, as guaranteed by the definition of $R(r,m)$.
Let $G$ be the graph blowup $H^{(n)}$. That is, replace each $v\in V(H)$ by an independent $n$-set, and replace each edge in $H$ by the corresponding complete bipartite graph in $G$ (specifically a copy of $K_{n,n}$). Then $K_r \not < G$ since $K_r \not < H$. Letting $I_1, \dots, I_N$ be the $N$ blown up vertices yields no rainbow independent $m$-set in $G$, since the $I_j$'s are disjoint and $\alpha(H)<m$.

To show that $f_{\cf(K_r)}(n,m) \leq R(r,m)$, let $G$ be a $K_r$-free graph and $I_1,\ldots,I_{R(r,m)} \in \ci_n(G)$.
Let $M$ be an inclusion-maximal rainbow set.
If $M$ represents all sets $I_j$, then $|M|=R(r,m)$.
Since $K_r \not < G$, it must be that $M$ contains an independent $m$-set, which is rainbow as required.
Thus we may assume that some $I_j$ is not represented in $M$.
By the maximality of $M$, this implies that  $M \supseteq I_j$, implying in turn that $I_j$ is a rainbow independent set of size $n\ge m$. 

The right-hand inequality is due to the monotonicity expressed in  \eqref{monotonepartial}, since the fact that $K_r \subseteq K_{r+1}^-$ implies that $\cf(K_r) \subseteq\cf(K_{r+1}^-)$.
\end{proof} 

\begin{figure}
\begin{center}
\begin{tikzpicture}[thick, scale=0.8]
\begin{scope}[xshift=-6]

\draw[white, dashed, rounded corners=8pt] (-0.5,1) rectangle (0.5,3.5);
\draw[white, dashed, rounded corners=8pt,rotate=72] (-0.5,1) rectangle (0.5,3.5);
\draw[white, dashed, rounded corners=8pt,rotate=144] (-0.5,1) rectangle (0.5,3.5);
\draw[white, dashed, rounded corners=8pt,rotate=216] (-0.5,1) rectangle (0.5,3.5);
\draw[white, dashed, rounded corners=8pt,rotate=288] (-0.5,1) rectangle (0.5,3.5);

\filldraw (90:2.2) circle(1pt);
\filldraw (162:2.2) circle(1pt);
\filldraw (234:2.2) circle(1pt);
\filldraw (306:2.2) circle(1pt);
\filldraw (18:2.2) circle(1pt);

\draw (90:2.2)--(162:2.2);
\draw (162:2.2)--(234:2.2);
\draw (234:2.2)--(306:2.2);
\draw (306:2.2)--(18:2.2);
\draw (18:2.2)--(90:2.2);

\node at (0,-3.5){$R(3,3) > 5$};
\end{scope}
\end{tikzpicture}
\begin{tikzpicture}[thick, scale=0.8]
\node at (-3.5,0) {$\longrightarrow$};
\node at (-3.5,-3.5) {$\Rightarrow$};
\end{tikzpicture}
\begin{tikzpicture}[thick, scale=0.8]

\node at (0,-3.5){$f_{\cf(K_3)}(4,3) > 5$.};

\filldraw (90:2) circle(1pt);
\filldraw (162:2) circle(1pt);
\filldraw (234:2) circle(1pt);
\filldraw (306:2) circle(1pt);
\filldraw (18:2) circle(1pt);

\filldraw (90:3) circle(1pt);
\filldraw (162:3) circle(1pt);
\filldraw (234:3) circle(1pt);
\filldraw (306:3) circle(1pt);
\filldraw (18:3) circle(1pt);

\draw (90:2)--(162:2);
\draw (162:2)--(234:2);
\draw (234:2)--(306:2);
\draw (306:2)--(18:2);
\draw (18:2)--(90:2);

\draw (90:2)--(162:3);
\draw (162:2)--(234:3);
\draw (234:2)--(306:3);
\draw (306:2)--(18:3);
\draw (18:2)--(90:3);

\draw (90:3)--(162:3);
\draw (162:3)--(234:3);
\draw (234:3)--(306:3);
\draw (306:3)--(18:3);
\draw (18:3)--(90:3);

\draw (90:3)--(162:2);
\draw (162:3)--(234:2);
\draw (234:3)--(306:2);
\draw (306:3)--(18:2);
\draw (18:3)--(90:2);

\filldraw (90:2.5) circle(1pt);
\filldraw (162:2.5) circle(1pt);
\filldraw (234:2.5) circle(1pt);
\filldraw (306:2.5) circle(1pt);
\filldraw (18:2.5) circle(1pt);

\filldraw (90:1.5) circle(1pt);
\filldraw (162:1.5) circle(1pt);
\filldraw (234:1.5) circle(1pt);
\filldraw (306:1.5) circle(1pt);
\filldraw (18:1.5) circle(1pt);

\draw (90:2.5)--(162:2.5);
\draw (162:2.5)--(234:2.5);
\draw (234:2.5)--(306:2.5);
\draw (306:2.5)--(18:2.5);
\draw (18:2.5)--(90:2.5);

\draw (90:2.5)--(162:2);
\draw (162:2.5)--(234:2);
\draw (234:2.5)--(306:2);
\draw (306:2.5)--(18:2);
\draw (18:2.5)--(90:2);

\draw (90:2)--(162:2.5);
\draw (162:2)--(234:2.5);
\draw (234:2)--(306:2.5);
\draw (306:2)--(18:2.5);
\draw (18:2)--(90:2.5);

\draw (90:2.5)--(162:3);
\draw (162:2.5)--(234:3);
\draw (234:2.5)--(306:3);
\draw (306:2.5)--(18:3);
\draw (18:2.5)--(90:3);

\draw (90:3)--(162:2.5);
\draw (162:3)--(234:2.5);
\draw (234:3)--(306:2.5);
\draw (306:3)--(18:2.5);
\draw (18:3)--(90:2.5);

\draw (90:1.5)--(162:2.5);
\draw (162:1.5)--(234:2.5);
\draw (234:1.5)--(306:2.5);
\draw (306:1.5)--(18:2.5);
\draw (18:1.5)--(90:2.5);

\draw (90:1.5)--(162:1.5);
\draw (162:1.5)--(234:1.5);
\draw (234:1.5)--(306:1.5);
\draw (306:1.5)--(18:1.5);
\draw (18:1.5)--(90:1.5);

\draw (90:2.5)--(162:1.5);
\draw (162:2.5)--(234:1.5);
\draw (234:2.5)--(306:1.5);
\draw (306:2.5)--(18:1.5);
\draw (18:2.5)--(90:1.5);

\draw (90:1.5)--(162:2);
\draw (162:1.5)--(234:2);
\draw (234:1.5)--(306:2);
\draw (306:1.5)--(18:2);
\draw (18:1.5)--(90:2);

\draw (90:2)--(162:1.5);
\draw (162:2)--(234:1.5);
\draw (234:2)--(306:1.5);
\draw (306:2)--(18:1.5);
\draw (18:2)--(90:1.5);

\draw (90:1.5)--(162:3);
\draw (162:1.5)--(234:3);
\draw (234:1.5)--(306:3);
\draw (306:1.5)--(18:3);
\draw (18:1.5)--(90:3);

\draw (90:3)--(162:1.5);
\draw (162:3)--(234:1.5);
\draw (234:3)--(306:1.5);
\draw (306:3)--(18:1.5);
\draw (18:3)--(90:1.5);

\draw[dashed, rounded corners=8pt] (-0.5,1) rectangle (0.5,3.5);
\draw[dashed, rounded corners=8pt,rotate=72] (-0.5,1) rectangle (0.5,3.5);
\draw[dashed, rounded corners=8pt,rotate=144] (-0.5,1) rectangle (0.5,3.5);
\draw[dashed, rounded corners=8pt,rotate=216] (-0.5,1) rectangle (0.5,3.5);
\draw[dashed, rounded corners=8pt,rotate=288] (-0.5,1) rectangle (0.5,3.5);

\end{tikzpicture}
\end{center}
\caption{One direction of the equality in Theorem \ref{t.ramseyequality}.}\label{fig.ramseyrainbowexample}
\end{figure}
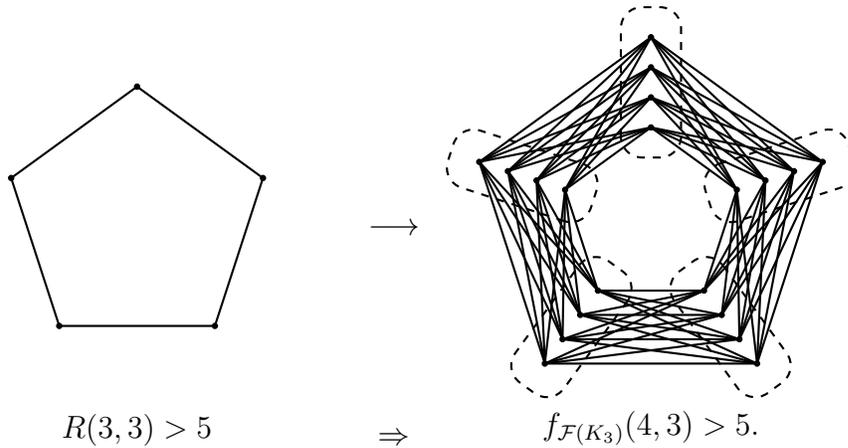

\begin{remark}\label{rem.disjoint Kr-free}
Since the construction above uses disjoint independent sets, this in fact shows
\[
\f_{\cf(K_r)}(n,m) = f_{\cf(K_r)}(n,m) \; (=R(r,m)).
\]
\end{remark}

\subsection{Chordal graphs and $\{C_4, C_5, \dots, C_s\}$-free graphs}

A graph $G$ is called {\em chordal} if 
$C_s \not < G$ for any $s\geq 4$.
Recall that the class of chordal graphs is denoted by $\ct$. 

\begin{theorem}\label{t.chordal rb}
If   $m \le n$ then $f_\ct(n,m)=m.$
\end{theorem}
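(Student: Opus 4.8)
The plan is to prove $f_\ct(n,m)=m$ for chordal graphs. By the trivial bound \eqref{trivialbound}, we already have $f_\ct(n,m)\ge m$, so the entire content lies in showing $f_\ct(n,m)\le m$: any $m$ independent $n$-sets $I_1,\dots,I_m$ in a chordal graph $G$ span a partial rainbow independent $m$-set. My approach would exploit the defining structural feature of chordal graphs, namely the existence of a \emph{perfect elimination ordering}: every chordal graph admits an ordering $v_1,\dots,v_N$ of its vertices such that for each $i$, the later neighbours $N(v_i)\cap\{v_{i+1},\dots,v_N\}$ form a clique. Equivalently, every chordal graph has a simplicial vertex (one whose neighbourhood is a clique), and this property is hereditary on induced subgraphs.

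The natural strategy is induction on $m$, processing the vertices via a simplicial-vertex peeling argument. First I would take a simplicial vertex $v$ of $G$; its closed neighbourhood $N[v]$ is a clique. The key observation is that a clique meets any independent set in at most one vertex, so each $I_j$ contains at most one vertex of $N[v]$. I would then try to use $v$ (or a carefully chosen vertex of its neighbourhood) to represent one of the colour classes and recurse on the remaining $m-1$ classes in the smaller chordal graph obtained by deleting some vertices. The subtlety is bookkeeping: when I commit vertex $v$ to the rainbow set, I must delete $N[v]$ to keep future choices independent from $v$, but deleting $N[v]$ could shrink the surviving independent sets below size $n$, so I would instead work with the partial-rainbow framework and only require that enough of each $I_j$ survives to keep applying the inductive hypothesis.

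A cleaner route, which I would probably favour, is to reformulate the claim as a statement about systems of distinct representatives and invoke a matroid or Hall-type argument adapted to the perfect elimination order. Concretely, I would process the simplicial vertices one at a time in elimination order, greedily assigning each newly exposed simplicial vertex to an as-yet-unrepresented colour class that contains it, while maintaining the invariant that the chosen vertices stay independent — this is automatic because once a simplicial vertex $v$ is selected and removed together with $N[v]$, no later-chosen vertex can be adjacent to it. The heart of the argument is a counting/Hall condition: I would need that at every stage the number of colour classes still needing representatives does not exceed the number of available simplicial vertices not yet blocked, and here the inequality $m\le n$ together with the fact that each clique blocks at most one vertex per class should give exactly the slack required.

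The step I expect to be the main obstacle is verifying that the greedy/inductive selection never gets stuck — that is, establishing the precise Hall-type condition guaranteeing a common transversal that is simultaneously a rainbow set and independent. The difficulty is that independence and the rainbow (distinct-colour) constraint interact, and a naive greedy choice might exhaust a colour class whose only surviving vertices lie in a clique already occupied by an earlier selection. Controlling this requires carefully using the simpliciality at each peeling step, so that whenever a vertex is removed it blocks at most one vertex from each remaining class, and then arguing by the $m\le n$ inequality that each class always retains an unblocked, independent representative. Formalising this invariant and confirming it is preserved under the deletion of $N[v]$ is where the real work sits; once it is in place, the bound $f_\ct(n,m)\le m$ follows, and combined with \eqref{trivialbound} yields the claimed equality.
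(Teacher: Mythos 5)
Your first route --- induction on $m$, peeling a simplicial vertex $v$ of the graph induced on $\bigcup_j I_j$, noting that the clique $N[v]$ meets each independent set $I_j$ in at most one vertex so each class shrinks by at most one, and recursing on the sets $I_j\setminus N[v]$ in the chordal graph $G[V\setminus N[v]]$ --- is exactly the paper's proof, and it is correct: since $m\le n$, after $m-1$ peels each surviving class still has at least $n-(m-1)\ge 1$ vertices, and $v$ is nonadjacent to everything outside $N[v]$, so the recursively obtained rainbow $(m-1)$-set extends by $v$. The ``main obstacle'' you worry about therefore does not arise in this inductive formulation, and the greedy/Hall-type machinery of your alternative route is unnecessary.
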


This will follow from the following basic  property of chordal graphs:
\begin{theorem}\label{chordal}  (\cite{ChL}, Theorem 8.11). 
Any chordal graph contains a simplicial vertex, namely a vertex whose neighbors form a clique.
\end{theorem}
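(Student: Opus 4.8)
The plan is to prove a slightly stronger statement, due to Dirac, which is what makes an induction succeed: \emph{every chordal graph that is not complete contains two non-adjacent simplicial vertices} (a complete graph needs no argument, since there every vertex is simplicial). I would argue by induction on $|V(G)|$. If $G$ is complete the claim is immediate, so assume $G$ is not complete and pick two non-adjacent vertices $a,b$. Choose $S \subseteq V(G)\setminus\{a,b\}$ to be an inclusion-minimal set whose removal separates $a$ from $b$, and let $C_a, C_b$ denote the connected components of $G-S$ containing $a$ and $b$ respectively. A standard consequence of minimality, which I would verify first, is that every $s \in S$ has a neighbour in $C_a$ and a neighbour in $C_b$ (otherwise $S\setminus\{s\}$ would still separate $a$ from $b$).

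The crux is the claim that $S$ is a clique, and this is the only place chordality is used. Given $s,s' \in S$, each has a neighbour in $C_a$, and since $C_a$ is connected there is an $s$--$s'$ path whose interior lies entirely in $C_a$; I would take a shortest such path $P$, and likewise a shortest $s$--$s'$ path $Q$ with interior in $C_b$. Being shortest, $P$ and $Q$ are induced paths, each of length at least $2$, and because $C_a$ and $C_b$ are distinct components of $G-S$ there are no edges between the interior of $P$ and the interior of $Q$. Hence $P$ followed by $Q$ is a cycle of length at least $4$ whose only conceivable chord is the edge $ss'$. Since $G$ is chordal it has no induced $C_s$ with $s\geq 4$, so $s\sim s'$, proving that $S$ is a clique.

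To finish the induction I would pass to $G_a := G[C_a \cup S]$, which is chordal (an induced subgraph of a chordal graph is chordal) and has fewer vertices than $G$ because $C_b \neq \emptyset$. The key observation is that every $v \in C_a$ satisfies $N_G(v) \subseteq C_a \cup S$, as $S$ separates $C_a$ from the rest of the graph; consequently $v$ is simplicial in $G_a$ if and only if it is simplicial in $G$. By the induction hypothesis $G_a$ has a simplicial vertex, and if $G_a$ is not complete it has two non-adjacent ones. Since $S$ is a clique, at most one simplicial vertex of $G_a$ can lie in $S$, so at least one lies in $C_a$ (and if $G_a$ happens to be complete, any vertex of the non-empty set $C_a$ serves); call it $v$, which is then simplicial in $G$. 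Applying the identical argument to $G_b := G[C_b \cup S]$ produces a simplicial vertex $w \in C_b$ of $G$, and since $v$ and $w$ lie in different components of $G-S$ they are non-adjacent, giving the two required simplicial vertices.

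The main obstacle is the clique claim for $S$: one must exhibit genuinely induced $s$--$s'$ paths through each side and confirm that non-adjacency of $s,s'$ would force a chordless cycle of length at least $4$. Everything else is bookkeeping, the only subtle point being the neighbourhood identity $N_G(v)=N_{G_a}(v)$ for $v\in C_a$, which is precisely what transfers simpliciality from the smaller graph back to $G$.
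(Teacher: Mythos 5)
Your proof is correct and complete. Note that the paper does not actually prove this statement: Theorem \ref{chordal} is quoted from Chartrand and Lesniak (their Theorem 8.11), so there is no internal argument to compare against, and what you have reconstructed is precisely the classical proof of Dirac that underlies the citation. All the delicate points are handled properly --- the inclusion-minimality of the separator $S$ gives every $s \in S$ a neighbour in each of $C_a$ and $C_b$; the two shortest $s$--$s'$ paths through $C_a$ and $C_b$ concatenate to a cycle of length at least $4$ whose only possible chord is $ss'$, so chordality forces $S$ to be a clique; and the strengthened induction hypothesis (two non-adjacent simplicial vertices unless the graph is complete), combined with the facts that $N_G(v)=N_{G_a}(v)$ for $v \in C_a$ and that the clique $S$ contains at most one simplicial vertex of $G_a$, is exactly what makes the induction close.
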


\begin{proof}[Proof of Theorem \ref{t.chordal rb}.]
By \eqref{trivialbound} it suffices to show that $f_\ct(n,m) \le m$. The proof is by induction on $m$. For $m=0$ there is nothing to prove. Assume the result is valid for $m-1$. 
Let $G$ be a chordal graph and let $I_1,\ldots,I_m \in \ci_n(G)$.
Let $V = I_1 \cup \cdots \cup I_m$.

Let $v$ be a simplicial vertex. 
Without loss of generality, $v \in I_m$.
Consider the induced subgraph  $G' = G[V\setminus N[v]]$ and the $m-1$ independent sets $I_j' = I_j \setminus N[v]$, $1 \leq j \leq m-1$, in $G'$.
Since $N[v]$ is a clique, any independent set in $G$ contains at most one vertex from $N[v]$, hence each $I_j'$ has cardinality at least $n-1$.
Since $G'$ is also chordal, by induction we may assume that there is a rainbow independent set $\{v_1, \ldots, v_{m-1}\}$ in $G'$, where $v_j \in I_j$ for each $j$.
Since $v$ is not adjacent to any $v_j$, the set $\{v_1,\ldots,v_{m-1}, v\}$ is a rainbow independent set in $G$.
\end{proof}

Chordal graphs exclude, as induced subgraphs, all cycles of length $\ge 4$. Given the  contrast between chordal graphs in Theorem \ref{t.chordal rb} and $C_4$-free graphs in Lemma \ref{l.C4freeinfinite}, it is of interest to know what happens if we exclude cycles up to a certain length.

We shall write $\Ck$ for a directed cycle of length $k\geq 2$ in a digraph. 
In this notation, $\overrightarrow{C_2}$ is a digon. 


\begin{theorem}\label{t.DAGgeneral}
Let $s\geq 4$, and $m \leq n$. The following are equivalent:

a.
There exists a directed graph $D$ on $n$ vertices with no undirected cycle of length $\leq s$ except for $\Cs$,
    nor any acyclic set  on $m$ vertices.

b. $f_{\F(C_4, \dots, C_s)}(n,m) = \infty$.
%
%
%

\end{theorem}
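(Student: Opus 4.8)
The plan is to pass from the function $f$ to the repeating-graph function $\fr$ and to translate everything into the language of the defining digraph $D$. By \eqref{eq.sunflower reduction} and \eqref{eq.repeating reduction} together with Theorems~\ref{t.sunflower reduction} and~\ref{t.repeating reduction}, finiteness of $f_{\F(C_4,\dots,C_s)}$ at all parameters is equivalent to finiteness of $\fr_{\F(C_4,\dots,C_s)}$ at all parameters; concretely, $f_{\F(C_4,\dots,C_s)}(n,m)=\infty$ forces $\fr_{\F(C_4,\dots,C_s)}(n',\ell)=\infty$ for some $\ell\le m$ and $n'=n-m+\ell\le n$, while conversely $\fr\le\f\le f$. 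Since there are only $2^{(n')^2}$ digraphs on $[n']$, a pigeonhole argument produces a single digraph $D$ whose strongly $D$-repeating graphs $G(D,k)$ lie in $\F(C_4,\dots,C_s)$ and carry no rainbow independent $\ell$-set for arbitrarily large $k$. One first notes that $D$ must carry a loop at every vertex, i.e.\ $G$ is strongly repeating: a loopless row would be an independent set meeting every column, yielding a rainbow independent $\ell$-set once $k\ge\ell$. Thus both directions reduce to understanding, for a fixed $D$ on $[n]$ with all loops, exactly when $G(D,k)$ is $\{C_4,\dots,C_s\}$-free and when it has no rainbow independent $m$-set.

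The rainbow side is the easy correspondence. In a strongly repeating graph every row is a clique, so a rainbow independent set uses distinct rows and distinct columns; placing rows $b_1,\dots,b_m$ in increasing columns, independence is exactly the absence of a forward arc $b_i\to b_j$ $(i<j)$, which (ignoring loops) says precisely that $D[\{b_1,\dots,b_m\}]$ is acyclic. Hence, for $k\ge m$, $G(D,k)$ has a rainbow independent $m$-set iff $D$ has an acyclic set on $m$ vertices. This already gives $(a)\Rightarrow(b)$: from a $D$ as in $(a)$ I build $G(D,k)$ for all $k$; they lie in the class by the cycle correspondence below, have no rainbow independent $m$-set by the above, and being arbitrarily many independent $n$-sets with no rainbow independent $m$-set they witness $f_{\F(C_4,\dots,C_s)}(n,m)=\infty$.

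The heart of the proof is the cycle correspondence: $G(D,k)$ contains an induced $C_\ell$ with $4\le\ell\le s$ (for $k$ large) if and only if $D$ contains an undirected cycle of length $\le s$ that is not a directed $s$-cycle. For the forward implication I would start from an induced cycle $v_0\cdots v_{\ell-1}$ in $G$, observe that two vertices sharing a row must be consecutive (else the clique-row forces a chord), contract the $r$ such ``row edges'', and read off a closed walk through the $\ell-r$ distinct rows in which consecutive rows are joined by an arc; this is an undirected cycle in $D$ of length $\ell-r\le s$. If $\ell-r<s$ it is automatically not a $\Cs$, and if $\ell-r=s$ then $r=0$, all $\ell=s$ rows are distinct, and the mere realizability of the cycle in $G$ forces the orientations not to run consistently around it, so again it is not a directed $s$-cycle. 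The converse is the delicate direction: given a shortest offending cycle in $D$ (hence chordless), I realise a forbidden induced cycle in $G$ by assigning columns. A non-directed chordless $j$-cycle $(j\ge4)$ yields an induced $C_j$ directly, because the cyclic system of strict inequalities coming from the arc directions is solvable exactly when the orientation is not cyclically monotone; a directed $j$-cycle (necessarily of length $\le s-1$, as length-$s$ directed cycles are the exception) instead yields an induced $C_{j+1}$ by ``doubling'' one vertex through a row edge and sending the columns strictly increasing along the arcs, chordlessness keeping all diagonals non-adjacent; short cycles (digons and triangles of any orientation) are checked by hand to produce an induced $C_4$. The point is exactly this $+1$ shift: a directed $s$-cycle only manufactures a $C_{s+1}$, which is allowed, whereas every shorter or non-directed cycle lands in the forbidden range $\{C_4,\dots,C_s\}$. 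I expect this column-realizability bookkeeping, together with verifying induced-ness of the constructed cycles, to be the main obstacle.

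Finally, to complete $(b)\Rightarrow(a)$ I must repair the parameter shift: the reductions deliver a digraph $D'$ on $n'=n-m+\ell$ vertices satisfying $(a)$ for $(n',\ell)$, while I need one on $n$ vertices for $(n,m)$. I would pad $D'$ by $m-\ell=n-n'$ new vertices, each carrying only a loop. No new undirected cycle of length $\ge 2$ is created, so the cycle condition persists; and any acyclic set of size $m$ in the padded digraph would meet $D'$ in at least $m-(m-\ell)=\ell$ vertices, which would be an acyclic $\ell$-set in $D'$ (acyclicity is inherited by induced sub-digraphs), contradicting the choice of $D'$. Hence the padded digraph on $n$ vertices satisfies $(a)$, finishing the equivalence.
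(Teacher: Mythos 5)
Your proposal is correct and follows essentially the same route as the paper: reduce to the repeating setting via the sunflower and Ramsey reductions, translate rainbow independent sets into acyclic sets of the defining digraph $D$ and induced cycles $C_4,\dots,C_s$ into undirected cycles of $D$ of length at most $s$ other than $\Cs$ (with the same case analysis for non-directed cycles, short directed cycles, transitive triangles and digons), and repair the parameter shift by padding $D'$ with $m-\ell$ trivial vertices. This is precisely the content of Theorem~\ref{t.DAGrepeating} and the paper's deduction of Theorem~\ref{t.DAGgeneral} from it.
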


By an undirected cycle, we mean a cycle in the underlying (undirected) graph.

Note that, when $n=m=s$, $D:=\Cn$  itself is not acyclic and has no smaller cycle, so Theorem \ref{t.DAGgeneral} shows \[f_{\F(C_4, \dots, C_n)}(n,n)=\infty. \] By  \eqref{monotonepartial}, this implies Lemma \ref{l.C4freeinfinite} (indeed, $G_{t,n}$ can be viewed as a $\Cn$-repeating graph, and $G_{t,n} \in \F(C_4, \dots, C_n)$).
On the other hand, any $D$ on $m=n$ vertices which is not acyclic  contains a $\Ck$ on some $k < n+1$ vertices. So Theorem \ref{t.DAGgeneral} yields
\[
f_{\F(C_4, \dots, C_{n+1})}(n,n)<\infty. \]

As in the proof of Theorem \ref{t.forbidden_introduction}, we use a reduction to the repeating version:

\begin{theorem}\label{t.DAGrepeating}
Suppose $s\geq 4,n,m$ are natural numbers, and $m \leq n$. Then the following are equivalent:

a.    There exists a directed graph $D$ on $n$ vertices with no undirected cycle of length $\leq s$ except for $\Cs$,
    nor any acyclic set  on $m$ vertices.
    
(b)    $\fr_{\F(C_4, \dots, C_s)}(n,m) = \infty$.
    
(c)    $\fr_{\F(C_4, \dots, C_s)}(n,m) \geq \max (m,s)$.
\end{theorem}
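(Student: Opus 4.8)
The plan is to realise all three conditions as statements about the single digraph $D$ underlying a repeating graph, using two correspondences that I would establish first. Throughout let $G$ be a $D$-repeating graph on $[k]\times[n]$. The preliminary reduction is that I may assume $G$ is strongly repeating, i.e.\ $D$ carries a loop at every vertex: if some row $[k]\times\{b\}$ is not a clique then it is an independent set meeting all $k$ columns, hence a rainbow independent set of size $k$, which already contains a rainbow independent $m$-set as soon as $k\ge m$. Thus for deciding whether $\fr$ is large only strongly repeating graphs matter, and in those every independent set uses pairwise distinct row-labels in $[n]$.

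First I record the independence/acyclicity correspondence. In a strongly repeating $G$, a rainbow independent set is a choice of columns $a_1<\dots<a_t$ together with distinct labels $b_1,\dots,b_t\in[n]$, and independence says exactly that $D$ has no arc from $b_p$ to $b_q$ whenever $p<q$; equivalently $b_t,\dots,b_1$ is a topological order of $D$ restricted to $\{b_1,\dots,b_t\}$, so the labels form an acyclic set of $D$. Reversing a topological order and spreading it over increasing columns gives the converse. Hence, once $k\ge m$, $G$ has a rainbow independent $m$-set if and only if $D$ has an acyclic $m$-set. In particular a directed path on $p$ vertices is acyclic, so $D$ having no acyclic $m$-set forces every directed path of $D$ to have fewer than $m$ vertices — the fact I expect to drive the threshold.

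Second is the cycle correspondence, which I regard as the technical heart. Reading an induced cycle of $G$ through its labels, consecutive cycle vertices lie in distinct columns and each adjacency forces an arc of $D$ whose direction is dictated by which of the two columns is larger; conversely an induced undirected $\ell$-cycle of $D$ can be realised as an induced $C_\ell$ of $G$ by choosing a column order compatible with the arc directions. Such a compatible order exists precisely when the cyclic sequence of forced inequalities is satisfiable, i.e.\ when the cycle is \emph{not} a consistently oriented directed cycle; a directed $\Cs$ cannot be closed up (it would demand a strictly increasing cyclic chain of columns) and so never appears as an induced cycle of $G$ — this is the source of the single exception in (a). Crucially, the number of distinct columns a realisation consumes is governed by the longest monotone run of the cycle, and a monotone run is a directed path; so by the observation above, when $D$ has no acyclic $m$-set every realisation uses fewer than $\max(m,s)$ columns.

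With these in hand I would prove $(a)\Rightarrow(b)\Rightarrow(c)\Rightarrow(a)$. For $(a)\Rightarrow(b)$: given $D$ as in (a), add loops and blow it up to $[k]\times[n]$ for every $k$; the cycle correspondence shows $G\in\F(C_4,\dots,C_s)$ (its only undirected cycle of length $\le s$ is the permitted directed $\Cs$), while the first correspondence shows $G$ has no rainbow independent $m$-set, so $\fr=\infty$. The step $(b)\Rightarrow(c)$ is immediate. For $(c)\Rightarrow(a)$, a witness repeating graph on $[\max(m,s)-1]\times[n]$ in the class with no rainbow independent $m$-set is necessarily strongly repeating, its digraph $D$ has no acyclic $m$-set by the first correspondence, and by the column count just derived every bad (non-directed) undirected cycle of length $\le s$ would already be realised inside only $\max(m,s)-1$ columns, contradicting membership in the class; hence $D$ meets the cycle condition too, giving (a). I expect the main obstacle to be exactly this last bookkeeping: showing that the number $\max(m,s)$ simultaneously plays its two roles — being large enough ($\ge m$) to detect an acyclic $m$-set and large enough ($\ge s$) to realise every relevant short cycle — and proving that at this precise value the directed $\Cs$ is the unique configuration escaping realisation, so that the threshold is genuinely $\max(m,s)$ rather than a larger Ramsey-type quantity.
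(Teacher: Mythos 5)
Your skeleton matches the paper's: reduce to strongly repeating graphs, translate rainbow independent sets into acyclic sets of the underlying digraph $D$, and translate induced cycles of $G$ into cycle configurations of $D$, with the directed $\Cs$ as the unique unrealisable one. The acyclicity correspondence is fine. The genuine gap is in your ``cycle correspondence,'' and it breaks the implication (c) $\Rightarrow$ a. You only realise \emph{non-directed} induced cycles of $D$ of length $\ell$ as induced copies of $C_\ell$ in $G$, and you assert that consistently oriented cycles cannot be realised at all. But condition a.\ forbids \emph{every} undirected cycle of $D$ of length at most $s$ other than $\Cs$ itself --- including digons, transitive triangles, and directed cycles $\overrightarrow{C_{s'}}$ with $2\le s'\le s-1$. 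Your argument gives no way to exclude these from the digraph extracted from a witness graph: under your correspondence a transitive triangle would only yield a $C_3$ in $G$, which is not forbidden, and a directed $\overrightarrow{C_{s'}}$ would yield nothing at all. The paper's proof devotes its Cases 2--4 to exactly these configurations, exploiting the row-cliques of a strongly repeating graph: a directed $\overrightarrow{C_{s'}}$ with $s'\le s-1$ produces an induced $C_{s'+1}$ in $G$ by re-entering the starting row in a fresh column, and a transitive triangle or a digon produces an induced $C_4$ on two columns. Without these constructions the $D$ you extract may satisfy the acyclicity half of a.\ while violating the cycle half, so (c) $\Rightarrow$ a.\ is not established.

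Two smaller points. In the direction a.\ $\Rightarrow$ (b) you must also show that an induced cycle of $G$ projects to an \emph{induced} cycle of $D$; since rows are cliques, two cycle vertices in the same row are forced to be consecutive, and one has to contract such repeated labels before invoking the hypothesis on $D$ --- a step your correspondence skips. Finally, your column bookkeeping via ``longest monotone runs'' is not what controls the constructions: realising a non-directed induced cycle of $D$ of length $s'$ costs $s'$ columns outright, and a directed $\overrightarrow{C_{s'}}$ costs $s'+1$, so one needs both at least $m$ columns (to force strong repetition) and at least $s$ columns (to build the forbidden cycle). The paper works with a witness on $\max(m,s)$ columns for precisely this reason; with the $\max(m,s)-1$ columns you allow yourself, at least one of the two requirements fails whichever of $m$ and $s$ is larger.
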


We first show how Theorem \ref{t.DAGgeneral} reduces to Theorem~\ref{t.DAGrepeating}.

\begin{proof}[Proof of Theorem~\ref{t.DAGgeneral}]
\underline{$b. \Rightarrow a.$}:
Suppose $f_{\F(C_4, \dots, C_s)}(n,m) = \infty$.
By Theorems \ref{t.sunflower reduction} and \ref{t.repeating reduction}, some $\ell \leq m$ has $\fr_{\F(C_4, \dots, C_s)}(n-m+\ell, \ell) = \infty$.
Theorem \ref{t.DAGrepeating} then gives a digraph $D'$ on $n-m+\ell$ vertices containing no cycle of length $\leq s$ except $\Cs$, and with no acyclic set of size $\ell$.

To conclude, form $D$ from $D'$ by adding $m-\ell$ isolated vertices. This way, $D$ has $n$ vertices and no additional cycles, and crucially no acyclic set of size $m$.

\underline{$a. \Rightarrow b.$} is immediate from Theorem \ref{t.DAGrepeating} together with \eqref{eq.sunflower reduction} and \eqref{eq.repeating reduction}.
\end{proof}

So, it remains to prove the repeating case.

\begin{proof}[Proof of Theorem~\ref{t.DAGrepeating}]
\underline{(b) $\Rightarrow$ (c)} is trivial.

\underline{(c) $\Rightarrow$ a.}:
Assuming (c), let $N=\max(m,s)$.
Take a repeating graph $G\in \cf(C_4, \dots, C_s)$ on $[N] \times [n]$, with no rainbow independent $m$-set with respect to the columns $\{a\} \times [n]$.

As $N \geq m$, $G$ is strongly $D$-repeating for some $D$, i.e. $D$ has a loop at every vertex (otherwise some row contains a rainbow independent $m$-set).

We next claim that $D$ is as desired.
First, assume for contradiction that $B'=\{b_1, \dots, b_m\}$ is an induced acyclic subgraph in $D$.
Then $D[B']$ can be completed to a transitive tournament; and in particular $B'$ can be relabelled so that
\[
\forall i, j \in [m], \qquad i > j \qquad \Rightarrow \qquad b_i b_j \not\in E(D).
\]
But this means that $\{(1,b_1), (2,b_2), \dots, (m,b_m)\}$ is a rainbow independent set in $G$, a contradiction. 

Next, suppose $D$ has a cycle of length $\leq s$ other than $\Cs$.
Then for some $s' \leq s$ there is an {\em induced} cycle $C=b_1 b_2 \dots b_{s'}b_1$ in $D$ (also not $\Cs$). We will use $C$ to find a cycle in $G$ of length between $4$ and $s$, giving the desired contradiction.

{\em Case 1.} $s' \in [4,s]$ and $C$ is not a directed cycle.
Then $C$ is acyclic, and as above there is a total ordering on $C$ so that all edges are oriented forwards, namely a relabelling $a_1, \dots, a_{s'}$ of $\{1, \dots, s'\}$ so that 
\[
\forall i\in [s']
\left\lbrace
\begin{array}{rl}
& b_i b_{i+1} \in E(D) \Rightarrow a_i < a_{i+1}, \text{ and} \\
& b_{i+1} b_i \in E(D) \Rightarrow a_i > a_{i+1}.
\end{array}
\right.
\]

Then
\[
(a_1, b_1), (a_2,b_2), \dots, (a_{s'}, b_{s'}), 
(a_1, b_1)
\]
is a copy of $C_{s'}$ in $G$ (see Figure \ref{fig.DAGcycle}),
since $N \geq s \geq s'$.

%
%
%
%
	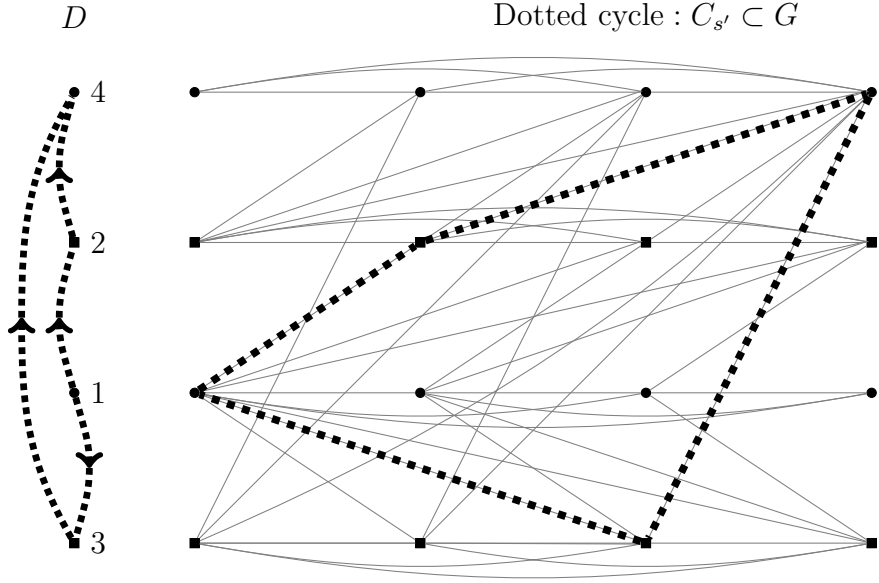
\begin{figure}[htbp]
	\centering
	\begin{tikzpicture}
	[main node1/.style={fill,circle,draw,inner sep=1pt,minimum size=3.5pt}, main node2/.style={fill,rectangle,draw,inner sep=1.25pt,minimum size=3.5pt}, scale=1]

	\begin{scope}[xshift = 0.4cm, yshift=0cm]
	
	    \node[main node1, label=right:4](u3) at (-2,6){};
    \node[main node2, label=right:2](u2) at (-2,4){};
    \node[main node1, label=right:1](u1) at (-2,2){};
    \node[main node2, label=right:3](u0) at (-2,0){};
	
	\node at (-2,7) {$D$};
	    \draw[dotted, line width=0.9mm,->] (u1)to [out=285, in=90] (-1.8,1);
	    \draw [dotted, line width=0.9mm] (-1.8,1) to [out=270,in=75] (u0);
	    \draw[dotted, line width=0.9mm, ->] (u0)to [out=120, in=270](-2.7,3);
	    \draw [dotted, line width=0.9mm] (-2.7,3) to [out=90, in=240] (u3);
	    \draw[dotted, line width=0.9mm,->] (u2)to [out=105, in=270](-2.2,5);
	    \draw[dotted, line width=0.9mm,] (-2.2,5)to [out=90, in=255](u3);
	    \draw[dotted, line width=0.9mm, ->] (u1)to [out=105, in=270](-2.2,3);
	    \draw[dotted, line width=0.9mm] (-2.2,3)to [out=90, in=255](u2);

	\end{scope}
	
		\begin{scope}[xshift = 0cm, yshift=0cm]
	    
	
    \node[main node1, label=right:](v3) at (0,6){};
    \node[main node2, label=right:](v2) at (0,4){};
    \node[main node1, label=right:](v1) at (0,2){};
        \node[main node2, label=right:](v0) at (0,0){};

    \node[main node1, label=below:](w3) at (3,6){};
    \node[main node2, label=below:](w2) at (3,4){};
    \node[main node1, label=above:](w1) at (3,2){};
        \node[main node2, label=above:](w0) at (3,0){};
    
    \node[main node1, label=below:](x3) at (6,6){};
    \node[main node2, label=below:](x2) at (6,4){};
    \node[main node1, label=above:](x1) at (6,2){};
        \node[main node2, label=above:](x0) at (6,0){};
    
    \node at (6,7){$\text{Dotted cycle}: C_{s'} \subset G$};
    \node[main node1, label=right:](y3) at (9,6){};
    \node[main node2, label=right:](y2) at (9,4){};
    \node[main node1, label=right:](y1) at (9,2){};
       \node[main node2, label=right:](y0) at (9,0){};

    \draw[color=gray] (v0)--(w0);
    \draw[color=gray] (v0)--(w3);
    \draw[color=gray] (v3)--(w3)--(v2)--(w2)--(v1)--(w1);
    
    \draw[color=gray] (w3)--(x3)--(w2)--(x2)--(w1)--(x1);
    \draw[color=gray] (x3)--(y3)--(x2)--(y2)--(x1)--(y1);
    \draw[color=gray] (v3) to [out=10, in=170] (y3) -- (v2)to [out=10, in=170](y2)--(v1)to [out=350, in=190](y1);
    \draw[color=gray] (v3) to [out=10,in=170] (x3) --(v2) to [out=10,in=170] (x2) -- (v1) to [out=350,in=190] (x1);
   \draw[color=gray] (w3) to [out=10,in=170] (y3) --(w2) to [out=10,in=170] (y2) -- (w1) to [out=350,in=190] (y1);
    \draw[color=gray] (v0) to [out=350,in=190] (x0) -- (w0) to [out=350,in=190] (y0);
    \draw[color=gray] (v0)to [out=350, in=190](y0);
    \draw[color=gray] (v0)--(w0)--(x0)--(y0);
    \draw[color=gray] (w0)--(x3)--(v0) to [bend right=10] (y3)--(x0);
    \draw[color=gray] (w0)--(y3); 
    
    \draw[color=gray] (w0)--(v1)--(x0)--(w1)--(y0)--(x1);
    \draw[color=gray] (v1)--(y0);
    %
    %
    %
    %
    
   \draw [line width=1mm, dashed] (v1) --(w2)--(y3)--(x0)--(v1);

    \end{scope}
	\end{tikzpicture}
\caption{Illustration of Case 1. For the labelling of $V(D)$ shown, all edges are increasing.\label{fig.DAGcycle}}
\end{figure}

{\em Case 2.} $s' \in [3,s-1]$ and $C$ is the directed cycle $\overrightarrow{C_{s'}}$.

Reversing directions if necessary, assume $C$ is oriented forwards. Then
\[
(1,b_1), (2,b_2), \dots, (s', b_{s'}), (s'+1, b_1), (1, b_1)
\]
is a copy of $C_{s'+1}$ in $G$ (see Figure \ref{fig.twoditriangles}). Here $4\leq s'+1 \leq s$, so this is in the forbidden range.

{\em Case 3.} $s'=3$ and $C$ 
is the transitive triangle $TT_3$. Relabelling if necessary, assume the edges are $b_1 b_2, b_2 b_3$, and $b_1 b_3$.
Then 
\[(1,b_1), (2,b_2), (1,b_2), (2, b_3), (1,b_1)
\]
is a copy of $C_4$ in $G$ (see Figure \ref{fig.twoditriangles}).

%
%
%
%
	\begin{figure}[htbp]
	\centering
	\begin{tikzpicture}
	[main node1/.style={fill,circle,draw,inner sep=1pt,minimum size=3.5pt}, main node2/.style={fill,rectangle,draw,inner sep=1.25pt,minimum size=3.5pt}, scale=1]
	

	\begin{scope}[xshift = 0.4cm, yshift=0cm]
	
	    \node[main node1, label=right:](u3) at (-2,6){};
    \node[main node2, label=right:](u2) at (-2,4){};
    \node[main node1, label=right:](u1) at (-2,2){};
    \node[main node2, label=right:](u0) at (-2,0){};
	
	\node[left] at (-2.7,3) {$TT_3$};
		\node[below right] at (-1.5,4) {$\overrightarrow{C_{s'}}$};
	
	\node at (-2,7) {$D$};
	    \draw[thick,->] (u0)to [out=120, in=270] (-2.5,2);
	    \draw [thick] (-2.5,2) to [out=90,in=240] (u2);
	    \draw[thick, ->] (u0)to [out=120, in=270](-2.7,3);
	    \draw [thick] (-2.7,3) to [out=90, in=240] (u3);
	    \draw[->, thick] (u2)to [out=105, in=270](-2.2,5);
	    \draw[thick] (-2.2,5)to [out=90, in=255](u3);

	    \draw[dotted, line width=0.9mm, ->] (u1)to [out=105, in=270](-2.2,3);
	    \draw[dotted, line width=0.9mm] (-2.2,3)to [out=90, in=255](u2);
	    \draw[dotted, line width=0.9mm, ->] (u2)to [out=105, in=270](-2.2,5);
	    \draw[dotted, line width=0.9mm] (-2.2,5)to [out=90, in=255](u3);
	    \draw[dotted, ->,line width=0.9mm] (u3)to [out=300, in=90](-1.5,4);
	    \draw[dotted, line width=0.9mm] (-1.5,4)to [out=270, in=60](u1);
	
	\end{scope}
	
		\begin{scope}[xshift = 0cm, yshift=0cm]
	    
	
    \node[main node1, label=left:](v3) at (0,6){};
    \node[main node2, label=left:](v2) at (0,4){};
    \node[main node1, label=left:](v1) at (0,2){};
        \node[main node2, label=left:](v0) at (0,0){};


    \node[main node1, label=below:](w3) at (3,6){};
    \node[main node2, label=below:](w2) at (3,4){};
    \node[main node1, label=above:](w1) at (3,2){};
        \node[main node2, label=above:](w0) at (3,0){};
    
        \node at (1.5,7){$\text{Solid}: C_{4} \subset G$};
    
    \node[main node1, label=below:](x3) at (6,6){};
    \node[main node2, label=below:](x2) at (6,4){};
    \node[main node1, label=above:](x1) at (6,2){};
        \node[main node2, label=above:](x0) at (6,0){};
    
    \node at (6,7){$\text{Dotted}: C_{s'+1} \subset G$};
    \node[main node1, label=right:](y3) at (9,6){};
    \node[main node2, label=right:](y2) at (9,4){};
    \node[main node1, label=right:](y1) at (9,2){};
       \node[main node2, label=right:](y0) at (9,0){};
 
    \draw[color=gray] (v0)--(w0);
    \draw[color=gray] (v0)--(w3);
     \draw[color=gray] (v0)--(w2);
    \draw[color=gray] (v3)--(w3)--(v2)--(w2)--(v1);
    \draw [color=gray] (w3)--(v3);
    
    \draw[color=gray] (w3)--(x3)--(w2)--(x2)--(w1)--(x1)--(w3);
    \draw[color=gray] (x3)--(y3)--(x2)--(y2)--(x1)--(y1)--(x3);
    \draw[color=gray] (v3) to [out=10, in=170] (y3) -- (v2)to [out=10, in=170](y2)--(v1)to [out=350, in=190](y1);
    \draw[color=gray] (v3) to [out=10,in=170] (x3) --(v2) to [out=10,in=170] (x2) -- (v1) to [out=350,in=190] (x1) to [out=150,in=315] (v3);
    \draw[color=gray] (w3) to [out=10,in=170] (y3) --(w2) to [out=10,in=170] (y2) -- (w1) to [out=350,in=190] (y1) to [out=150,in=315] (w3);
    \draw [line width=1mm, dashed] (v1) to [out=350,in=190] (y1);
    \draw[line width=1mm, dashed] (v1)--(w2)--(x3)--(y1);
    \draw[color=gray] (v0) to [out=350,in=190] (x0) -- (w0) to [out=350,in=190] (y0);
    \draw[color=gray] (v0)to [out=350, in=190](y0);
    \draw[color=gray] (v0)--(w0)--(x0)--(y0);
    \draw[color=gray] (x3)--(v0)--(y2);
    \draw[color=gray] (v0) to [bend right=10] (x2);
    \draw[color=gray] (v0) to [bend right=10] (y3);
    \draw[color=gray] (x3)--(w0)--(x2);
    \draw[color=gray] (w0)--(y3); 
    \draw[color=gray] (w0) to [bend right=10] (y2);
    \draw[color=gray] (y2)--(x0)--(y3);
    \draw [color=gray] (y1)--(v3)--(w1)--(v1);
    

\draw[ ultra thick] (v0)--(w2)--(v2)--(w3)--(v0);
    
    \end{scope}
	\end{tikzpicture}
\caption{Illustration of Cases 2 (solid) and 3 (dotted). \label{fig.twoditriangles}}
\end{figure}
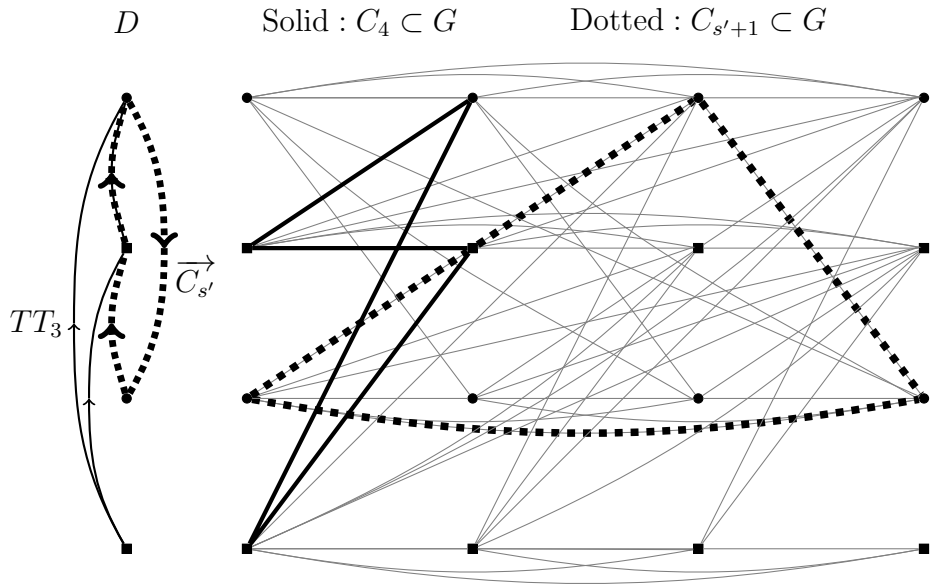

{\em Case 4.} $s'=2$, so $C=\overrightarrow{C_2}$.
Then the following is a copy of $C_4$ in $G$:
\[ (1,b_1), (2,b_1), (1,b_2), (2,b_2), (1,b_1).\]

\underline{a. $\Rightarrow$ b'.:}
Given a digraph $D$ as in a.,
define a strongly repeating graph $G$ on vertex set $\N \times V(D)$ as follows:
\[
(a,b) \sim (a',b') \text{ in } E(G)
\hspace{5mm}
\Leftrightarrow
\hspace{5mm}
\begin{array}{l}
a < a' \text{ and } bb' \in E(D), \text{ or} \\
a > a' \text{ and } b'b \in E(D), \text{ or} \\
a \neq a' \text{ and } b = b'.
\end{array}
\]

We first claim that $C_4, C_5, \dots, C_s \not <G$. Suppose $(a_1, b_1), \dots, (a_{s'}, b_{s'}), (a_1, b_1)$ is one such cycle for contradiction.

Clearly, $b_1 b_2 \dots b_{s'} b_1$ is a closed walk in $D$, possibly with backward edges and repeated vertices.

If any vertex repeats, that is  $b_i=b_j$ for some $i \neq j$,
then since $(a_i,b_i) \sim (a_j,b_i)=(a_j,b_j)$ in $G$, it follows that they must be consecutive vertices in the cycle.

So choosing a subsequence of $b_1 b_2 \dots b_{s'}$ by removing one vertex from each  consecutive pair of identical elements, gives an induced cycle in $D$, with at least two vertices, since $s' \geq 4$. 

By the assumption on $D$, $b_1 b_2 \dots b_{s'}$  form a \emph{directed} cycle $\Cs$, so in particular $s'=s$ and no vertices are repeated. Without loss of generality this cycle is oriented forwards. But then $a_1 \leq a_2 \leq \dots \leq a_s \leq a_1$, so equality holds throughout.  This means all all $(a_i,b_i)$ vertices lie in a single column, thus forming an independent set, contradicting the fact that they lie on a cycle.

We claim that
this $G$ witnesses $\fr_{\F(C_4, \dots, C_s)}(n,m)=\infty$.

Indeed, suppose $S=\{(a_1, b_1), \dots, (a_m,b_m)\}$ is a rainbow independent $m$-set, with respect to the (infinitely many) columns. The $\{a_i\}$'s are distinct by rainbowness, while the $\{b_i\}$'s are distinct by independence and the strong repeating property.

By assumption, $b_1, \dots, b_m$ contain a cycle, so relabelling as necessary we may assume $b_1b_2 \dots b_{m'}b_1$ is a directed cycle for some $m'\leq m$.
Like before, this tells us $a_1 < a_2 < \dots < a_{m'} < a_1$, a contradiction.
\end{proof}

For general $m$ and $s$, determining the largest $n$ for which 
condition (c) above 
 holds appears difficult. For $s=4$,  standard Ramsey-type results (see \cite{AKS, Shearer}) show that if such  $D$ exists then $n < R(3,m) = O(m^2 / \log m)$.

\section{ $k$-colourable graphs}

Recall that the class of $k$-colourable graphs is denoted by $\cx(k)$.
 
\begin{theorem}\label{t.colourable}
 If  $m \le n$ then   $f_{\cx(k)}(n,m)=k(m-1)+1.$
\end{theorem}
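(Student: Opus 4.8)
The plan is to prove the two bounds separately.

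\emph{Lower bound $f_{\cx(k)}(n,m)\ge k(m-1)+1$.} I would exhibit $k(m-1)$ independent $n$-sets in a $k$-colourable graph with no partial rainbow independent $m$-set. Take $G$ to be the disjoint union of $m-1$ copies of the complete $k$-partite graph $K_{n,\dots,n}$ (every part of size $n$). Then $G\in\cx(k)$: the union of the $c$-th parts over all copies is an independent set, and these $k$ sets form a proper colouring. Index the sets by pairs $(i,c)$ with $i\le m-1$ and $c\le k$, letting $I_{(i,c)}$ be the $c$-th part of the $i$-th copy; these are $k(m-1)$ members of $\ci_n(G)$. Since each copy is complete multipartite, any independent set meets a given copy inside a single part, so a rainbow set uses at most one vertex of each copy; summing over the $m-1$ copies bounds every rainbow independent set by $m-1$. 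Hence no rainbow independent $m$-set exists, and $f_{\cx(k)}(n,m)>k(m-1)$.

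\emph{Upper bound $f_{\cx(k)}(n,m)\le k(m-1)+1$.} I would argue by induction on $k$. The base case $k=1$ is the edgeless graph, where $m$ sets of size $\ge m$ have a system of distinct representatives by Hall's theorem, and any such system is automatically independent. For the inductive step fix a proper colouring $C_1,\dots,C_k$ and sets $I_1,\dots,I_N\in\ci_n(G)$ with $N=k(m-1)+1$. Let $\mu$ be the largest size of a rainbow independent set contained in the single class $C_k$; equivalently, the maximum partial SDR of the family $(I_j\cap C_k)_j$ inside the independent set $C_k$. If $\mu\ge m$ we are done, so assume $\mu\le m-1$. By König's theorem the bipartite incidence graph between $[N]$ and $C_k$ has a vertex cover $\mathcal S\cup T$ with $\mathcal S\subseteq[N]$, $T\subseteq C_k$ and $|\mathcal S|+|T|=\mu$, so that $I_j\cap C_k\subseteq T$ for every $j\notin\mathcal S$. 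Put $s=|\mathcal S|$, $t=|T|$. Applying the induction hypothesis inside the $(k-1)$-colourable graph $G-C_k$ to the $N-s$ sets $I_j\setminus C_k$ with $j\notin\mathcal S$ (each of size $\ge n-t\ge m-t$), and using that $(N-s)-\big[(k-1)(m-t-1)+1\big]=(m-1-s)+(k-1)t\ge0$, yields a rainbow independent $(m-t)$-set $R$ in $G-C_k$.

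It then remains to enlarge $R$ to size $m$ by adjoining $t$ representatives that lie in $C_k$, are taken from sets not used by $R$, and are non-adjacent to $R$; this extension step is where I expect the real difficulty to lie. Because $k$-colourability puts no bound on degrees, a single vertex of $R$ may be adjacent to almost all of $C_k$, so one cannot simply delete $N[R]$ and choose the remaining $t$ representatives greedily (this is also what breaks the more naive ``delete one vertex and induct on $m$'' strategy). The crux is therefore to choose $R$ and its $C_k$-extension together, via a Hall-type/augmenting argument that matches the still-uncovered sets to vertices of $C_k\setminus N(R)$; this is precisely the point at which the extra ``$+1$'' and the interaction between the colour classes are consumed, consistent with the extremal example above, where $t=0$ and no extension is needed.
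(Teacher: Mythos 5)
Your lower bound is correct: the disjoint union of $m-1$ copies of the complete $k$-partite graph $K_{n,\dots,n}$ is $k$-colourable and its $k(m-1)$ parts admit no rainbow independent $m$-set, since any independent set meets each copy in a single part and each vertex of a copy lies in only one of that copy's parts. (The paper's own example is the single complete $k$-partite graph with each side repeated $m-1$ times; your disjoint variant is the one the paper records separately to get the stronger statement $\f_{\cxk}(n,m)=k(m-1)+1$.)

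The upper bound, however, has a genuine gap: the whole argument rests on the final extension step, and you leave that step unproved. After the induction on $k$ produces a rainbow independent $(m-t)$-set $R$ in $G-C_k$, you must adjoin $t$ vertices of $C_k$ representing $t$ unused sets and avoiding $N(R)$. As you yourself note, a vertex of $R$ may be adjacent to almost all of $C_k$, and nothing in the K\H{o}nig cover $\mathcal S\cup T$ controls the adjacencies between $R$ and $C_k$ --- it only constrains which sets meet $C_k$ outside $T$. Saying that ``the crux is to choose $R$ and its extension together via a Hall-type argument'' restates the difficulty rather than resolving it, so the induction is never closed. It is worth comparing with the paper's proof, which avoids independence considerations until the very last moment: take an inclusion-maximal rainbow set $M$ with \emph{no} independence requirement. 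If some $I_j$ is unrepresented in $M$, maximality forces $M\supseteq I_j$, and $I_j$ itself is a rainbow independent set of size $n\ge m$. Otherwise $|M|=k(m-1)+1$, and the pigeonhole principle places $m$ elements of $M$ in a single colour class $V_j$; since $V_j$ is independent, these $m$ elements are the desired rainbow independent $m$-set. No induction on $k$, no K\H{o}nig, and the ``$+1$'' is consumed exactly by the pigeonhole count.
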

\begin{proof}
To show that  $f_{\mathcal{X}(k)}(n,m) > k(m-1)$,
let $G$ be the complete $k$-partite graph with all sides of size $n$, and take a family of $k(m-1)$ independent sets, consisting of  each side of the graph repeated $m-1$ times. A rainbow set of size $m$ must include vertices from two different sides of the graph by the pigeonhole principle, and hence cannot be independent. 
To bound $f_{\mathcal{X}(k)}(n,m)$ from above, let $G$ be a $k$-colourable graph and let $I_1,\ldots,I_{k(m-1)+1} \in \ci_n(G)$.
colour $G$ by colours $V_i$ $(i\le k)$, so $V_1, \ldots ,V_k$ are independent sets covering $V(G)$.

\smallskip

Let $G$ be a $k$-colourable graph and $I_1,\ldots,I_{k(m-1)+1} \in \ci_n(G)$.
Let $M$ be an inclusion-maximal rainbow set.
If $M$ represents all sets $I_j$, then $|M|=k(m-1)+1$, and by pigeonhole $M$ contains $m$ vertices from the same set $V_j$.
Since $V_j$ is independent, this means that $M$ contains a rainbow set of size $m$, as required. 
Thus we may assume that $I_j$ is not represented in $M$ for some $j$. By the maximality of $M$, this implies that  $M \supseteq I_j$, implying in turn that $I_j$ is a rainbow independent set of size $n\ge m$. 
\end{proof}

Theorem \ref{t.colourable} can be strengthened, in the spirit of Theorem \ref{strongerdrisko}:  a family of independent sets $F_i,~~ i  \le k(m-1)+1$ in a $k$-chromatic graph, where $|F_i|\ge \min(i,n)$, has a rainbow independent $m$-set. To prove this,  follow the same proof as above, choosing the elements of $M$ greedily from the sets $F_1, \ldots ,F_n$.

Moreover, Theorem \ref{t.colourable} can be strengthened in the spirit of Remark \ref{rem.disjoint Kr-free}, namely
\[
\f_\cxk(n,m)=k(m-1)+1 \; (=f_\cxk(n,m)).
\]
The upper bound comes from \eqref{eq.sunflower reduction} and Theorem \ref{t.colourable}, while the lower bound is given by the disjoint union of $m-1$ copies of $K_{\underbrace{n,n,\dots, n}_k}$.


\section{Graphs with bounded degrees}\label{bounded_degrees}
Recall that $\cd(k)$ is the class of graphs having  vertex degrees no larger than $k$.
Theorem~\ref{t.colourable}, together with the well-known inequality 
$\chi(G)\leq \Delta(G)+1$,
imply $f_{\cd(k)}(n,n) \leq (k+1)(n-1)+1$.
Applying Brooks' theorem, by which 
$\chi(G)\leq \Delta(G)$ unless  
$G$ is complete or an odd cycle, 
 allows replacing $k+1$ by $k$, for $k \geq 3$ (we omit the details). But probably also the bound $k(n-1)+1$ is   not  sharp.

\begin{conjecture}\label{bdd Delta}
$f_{\cd(k)}(n,n) = \left\lceil \frac{k+1}{2} \right\rceil(n-1)+1$.
\end{conjecture}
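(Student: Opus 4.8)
The plan is to prove the two inequalities separately: the lower bound is routine and rests on a construction already in the paper, while the upper bound is the genuinely hard (and, as the authors stress, only conjectural) direction.

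For the lower bound $f_{\cd(k)}(n,n) \ge \lceil\frac{k+1}{2}\rceil(n-1)+1$, I would invoke Example~\ref{mainconstruction} directly. Put $t = \lceil\frac{k+1}{2}\rceil$ and consider $G_{t,n}$, obtained from a cycle of length $tn$ by joining every two vertices at cyclic distance less than $t$; every vertex then has exactly $2(t-1)$ neighbours. Writing $k=2j$ or $k=2j+1$ one checks $t=j+1$, so $2(t-1)=k$ when $k$ is even and $2(t-1)=k-1$ when $k$ is odd; in both cases $2(t-1)\le k$, hence $G_{t,n}\in\cd(k)$. By the analysis following Example~\ref{mainconstruction}, the family consisting of $n-1$ copies of each of the $t$ independent $n$-sets of $G_{t,n}$ admits no rainbow independent $n$-set, and it has $t(n-1)=\lceil\frac{k+1}{2}\rceil(n-1)$ members. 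This gives the lower bound (for $k$ odd one could equivalently cite $\cd(k-1)\subseteq\cd(k)$ and \eqref{monotonepartial}). Note that $t=2$, i.e.\ $k\in\{2,3\}$, recovers Drisko's sharp Example~\ref{driskosharp}.

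The difficulty lies entirely in the upper bound: any $\lceil\frac{k+1}{2}\rceil(n-1)+1$ independent $n$-sets in a graph of maximum degree $k$ should span a rainbow independent $n$-set. The naive bound $k(n-1)+1$ from $\chi\le\Delta+1$ together with Theorem~\ref{t.colourable} is off by essentially a factor of two, and recovering the constant $\frac{k+1}{2}$ is the whole point. I would pursue a topological route: apply the Aharoni–Berger–Ziv theorem on independent systems of representatives in its deficiency form, which turns $N$ independent sets $I_1,\dots,I_N$ into a rainbow independent $n$-set provided $\eta\big(\ci(G[\bigcup_{j\in J}I_j])\big)\ge |J|-(N-n)$ for every $J\subseteq[N]$, where $\eta$ is the topological connectivity of the independence complex. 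With $N=\lceil\frac{k+1}{2}\rceil(n-1)+1$ one has $N-n=(\lceil\frac{k+1}{2}\rceil-1)(n-1)$, so the hypothesis is vacuous for small $|J|$, and the content is a lower bound on $\eta$ for the independence complex of a union of many independent $n$-sets in a bounded-degree graph.

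Here is the main obstacle. The standard connectivity estimates for independence complexes have the shape $\eta(\ci(H))\ge |V(H)|/g(\Delta(H))$, i.e.\ they only see the number of vertices of $H=G[\bigcup_{j\in J}I_j]$; since heavily overlapping $I_j$'s can keep $|V(H)|$ as small as $n$ while $|J|$ grows, such a bound stays bounded and cannot produce the required linear growth in $|J|$. What one truly needs is a \emph{sharp} connectivity bound for complexes of bounded-degree graphs that are covered by $q$ independent sets of size $n$, of exactly the strength encoding the constant $\frac12$; establishing this appears to be as hard as the conjecture itself. The factor $\frac{k+1}{2}$ is the signature of exploiting a cyclic, bidirectional structure, precisely as the two orientations around the alternating union cycle drive the bound $2n-1$ in Theorem~\ref{drisko}. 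The realistic intermediate targets are therefore the special cases: $k=1$ is immediate, since such graphs are chordal and Theorem~\ref{t.chordal rb} gives $f=n=\lceil\frac{2}{2}\rceil(n-1)+1$; $k=2$ is the content of Conjecture~\ref{k=2ab} and should yield to a direct alternating-path argument adapting Drisko's proof to disjoint unions of paths and cycles; and the reductions of Theorems~\ref{t.sunflower reduction} and \ref{t.repeating reduction} can localise the problem to repeating graphs, though they distort the constant and so give only finiteness, not the exact value. The general upper bound—requiring either a degree-$k$ analogue of Drisko's orientation trick or a correspondingly sharpened topological connectivity estimate—is expected to be the principal remaining difficulty.
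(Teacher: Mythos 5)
The statement you are proving is stated in the paper as a conjecture, and the paper itself establishes only the lower bound $f_{\cd(k)}(n,n) \ge \left\lceil \frac{k+1}{2} \right\rceil(n-1)+1$ together with some special cases; so your assessment of where the difficulty lies is accurate. Your lower-bound argument is correct and is essentially identical to the paper's: both take $t=\left\lceil \frac{k+1}{2}\right\rceil$, use the graph $G_{t,n}$ of Example~\ref{mainconstruction} with $n-1$ copies of each of its $t$ independent $n$-sets, and verify $\Delta(G_{t,n})=2t-2\le k$. Your parity check ($2(t-1)=k$ for $k$ even, $k-1$ for $k$ odd) is fine, as is the alternative appeal to \eqref{monotonepartial}.

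For the upper bound, you correctly refrain from claiming a proof, but it is worth contrasting your proposed route with what the paper actually does for its partial results. You suggest a topological/connectivity approach (via independent systems of representatives and $\eta$ of independence complexes) and correctly diagnose why off-the-shelf connectivity bounds, which see only $|V(H)|$ and not the multiplicity of the covering independent sets, cannot deliver the constant $\frac{k+1}{2}$. The paper's Theorem~\ref{t.degree1or2} for $k\le 2$ does not adapt Drisko's orientation argument as you anticipate; instead it takes an inclusion-maximal rainbow set $M$ inducing a bipartite subgraph (omitting one vertex per odd cycle), and either pigeonholes an independent $n$-subset out of $|M|=2n-1$ or repairs an unrepresented $I_j$ by swapping its intersection with each offending odd cycle $C_v$ for an independent set inside $M$. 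Similarly, the cases $n=2,3$ are handled by degree-counting and sunflower/component analysis (Theorems~\ref{n,2} and~\ref{thm: 4,3}), not topologically. Your $k=1$ observation (matchings are chordal, so Theorem~\ref{t.chordal rb} applies) is correct, though the paper routes this through $K_3^-$-freeness and Observation~\ref{t.disjointcliques}. In short: the lower bound matches the paper; the upper bound is open in both, and the combinatorial techniques the paper uses for its special cases are a more promising template than the connectivity bounds you identify as insufficient.
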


One inequality,  $f_{\cd(k)}(n,n) \geq \left\lceil \frac{k+1}{2} \right\rceil(n-1)+1$,  is shown by the graph $G_{t,n}$ from Example~\ref{mainconstruction}. 
As observed there, $G_{t,n}$ has no rainbow independent $n$-set. 
To establish the desired bound, note that every $v \in G_{t,n}$ has degree $2t-2 \leq k$ if we choose $t= \left\lceil \frac{k+1}{2} \right\rceil$.

\begin{theorem}\label{t.degree1or2}
Conjecture \ref{bdd Delta} is true for $k\le 2$. 
\end{theorem}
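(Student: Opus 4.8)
The plan is to handle $k=1$ and $k=2$ separately, since Conjecture~\ref{bdd Delta} predicts the values $n$ and $2n-1$ respectively. The case $k=1$ should be immediate: a graph of maximum degree $1$ is a disjoint union of edges and isolated vertices, hence a forest, hence chordal, so $\cd(1)\subseteq\ct$. Combining this inclusion with Theorem~\ref{t.chordal rb} via \eqref{monotonepartial} gives $f_{\cd(1)}(n,n)\le f_\ct(n,n)=n$, and the reverse inequality is \eqref{trivialbound}. Thus $f_{\cd(1)}(n,n)=n=\lceil\tfrac{2}{2}\rceil(n-1)+1$.

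For $k=2$ the lower bound $f_{\cd(2)}(n,n)\ge 2n-1$ is exactly the graph $G_{2,n}$ of Example~\ref{mainconstruction} (equivalently Example~\ref{driskosharp}), as recorded after the statement of Conjecture~\ref{bdd Delta}: all degrees are $2$ and the $2(n-1)$ colours admit no rainbow independent $n$-set. So the real content is the upper bound $f_{\cd(2)}(n,n)\le 2n-1$. Here $G$ is a disjoint union of paths and cycles, and my strategy is to reduce to the bipartite (equivalently, $2$-colourable) case. If $G$ has no odd cycle then $G$ is bipartite, hence $2$-colourable, and Theorem~\ref{t.colourable} with $k=2$ yields precisely $f(n,n)\le 2(n-1)+1=2n-1$; one could equally cite Drisko's Theorem~\ref{drisko}, since a union of paths and even cycles lies in $\cb$. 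The entire difficulty is therefore the presence of odd cycles: mere $3$-colourability gives only the weaker bound $3n-2$ from Theorem~\ref{t.colourable}, matching the general line-graph bound of Theorem~\ref{gendrisko}, so the degree-$2$ hypothesis must be used to recover the extra $n-1$.

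To eliminate odd cycles I would delete, from each odd cycle $C$, a single carefully chosen vertex $w$, which turns $C$ into a path and decreases the number of odd cycles; after deleting all such vertices the graph is bipartite, and any rainbow independent $n$-set found there (which automatically avoids the deleted vertices) is still independent in $G$. The key leverage is that an odd cycle $C=C_{2\ell+1}$ has $\alpha(C)=\ell$, so every colour class meets $C$ in at most $\ell$ of its $2\ell+1$ vertices; averaging over the vertices of $C$ then produces a vertex $w\in C$ lying in at most $n-1$ of the $2n-1$ colour classes. Deleting such a low-incidence $w$ damages only a few classes, and the resulting size loss is meant to be absorbed by the strengthened, deficiency form of Theorem~\ref{t.colourable} (the variant requiring only $|F_i|\ge\min(i,n)$, stated just after it), ordering the shrunken classes first.

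The main obstacle is the bookkeeping once several odd cycles must be destroyed: each deletion can shrink several colour classes, and one must guarantee that the deficiency condition $|F_i|\ge\min(i,n)$ survives \emph{all} the deletions at once. A naive one-cycle-at-a-time induction can exhaust the slack at the top level $s=n$ (a single deletion may already force exactly $n$ classes to retain full size), after which no further low-incidence vertex is safe to remove. I therefore expect the correct execution either to select the deleted set $W$ globally, controlling the whole damage profile $\#\{j:|I_j\cap W|\ge r\}\le n-r$ simultaneously, or to run an augmenting-path argument in the spirit of the proof of Theorem~\ref{gendrisko}: passing to the matching model $G=L(H)$ with $\Delta(H)\le 2$, the symmetric difference of the current rainbow matching with any colour class is a disjoint union of alternating paths and even cycles, so each augmentation should cost only the Drisko-optimal number of colours rather than the $3n-2$ worth required in the general case. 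Verifying that this saving is genuine for every odd-cycle configuration is the step I expect to be delicate.
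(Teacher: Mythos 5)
Your $k\le 1$ case and the lower bound $f_{\cd(2)}(n,n)\ge 2n-1$ are fine (the former via $\cd(1)\subseteq\ct$ rather than the paper's route through $K_3^-$-freeness, but both are one-liners). The upper bound $f_{\cd(2)}(n,n)\le 2n-1$, however, is exactly where your argument stops short of a proof, and you essentially concede this yourself. The plan of deleting one low-incidence vertex from each odd cycle and then invoking the deficiency form of Theorem~\ref{t.colourable} requires, as you note, the global damage profile $\#\{j:|I_j\cap W|\ge r\}\le n-r$ for all $r$. The local averaging argument only bounds the damage of a \emph{single} deletion (at most $n-1$ classes each lose one vertex); with many odd cycles the total deficiency can be of order $n^3$ while the deficiency condition tolerates only $\binom{n}{2}$, and no mechanism is offered for spreading the losses across distinct classes. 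The alternative augmenting-path sketch is likewise not carried out. So this is a genuine gap: a missing idea, not just unfinished bookkeeping.

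The missing idea in the paper is to avoid modifying the graph at all and instead work with a rainbow set directly: take an inclusion-maximal rainbow set $M$ whose induced subgraph $G[M]$ is bipartite, i.e.\ $M$ omits at least one vertex from every odd cycle. If $M$ represents all $2n-1$ classes, pigeonhole on the two sides of $G[M]$ gives an independent rainbow $n$-set. Otherwise some class $I_1$ is unrepresented, and maximality forces every $v\in I_1\setminus M$ to lie on an odd cycle component $C_v$ (of length $2t(v)+1$) with $V(C_v)\setminus\{v\}\subseteq M$; one then swaps $I_1\cap V(C_v)$ (of size at most $t(v)$) for an independent $t(v)$-set of $C_v$ inside $M$, producing an independent $n$-set contained in $M$, hence rainbow. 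This maximal-bipartite-rainbow-set device is what replaces your global deletion scheme, and without it (or an equivalent), your proof of the $k=2$ upper bound does not go through.
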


\begin{proof}
Consider first the case $k\le 1$. 
Any matching is $K_3^-$-free, so
\[
f_{\cd(1)}(n,m) \leq f_{\cf(K_3^-)}(n,m) = m = \left\lceil \frac{k+1}{2} \right\rceil (m-1)+1,
\]
by \eqref{monotonepartial} and Observation \ref{t.disjointcliques}.

Next consider the case $k=2$, namely of graphs whose components are cycles and paths. We have to show that  in such a graph $G$, any family     $I_1, \ldots, I_{2n-1} \in \ci_n(G)$ has an independent rainbow  $n$-set.
This would follow from Theorem \ref{drisko} if all cycles in $G$ were even, but for general cycles it requires a separate proof. 

Take an inclusion-maximal rainbow set $M$ that induces a bipartite graph in $G$, meaning that it omits at least one vertex from each odd cycle.
If $|M| = 2n-1$, then  $M$ contains a subset of size $n$ in one of the sides of the bipartite graph, which is independent in $G$.
Thus we may assume that $|M| < 2n-1$. Then one of the sets $I_j$, say $I_1$, is not represented by $M$.

Since $M$ is inclusion-maximal rainbow bipartite,  each vertex $v \in I_1 \setminus M$ is contained in an odd cycle $C_v$, say of length $2t(v)+1$, that is a connected component of $G$, such that $V(C_v) \setminus \{v\} \subseteq M$.
For each such $v$  replace in $I_1$
the set $I_1 \cap V(C_v)$ (which is of size at most $t(v)$, since $I_1$ is independent) by any independent set $J_v$ of size $t(v)$ not containing $v$ (note $J_v \subseteq M)$. 
The result of these replacements is an independent $n$-set, contained in $M$, yielding the desired rainbow set.

\end{proof}

\medskip

\subsection{The case $m<n$}
 
 Conjecture \ref{bdd Delta} can be generalized, as follows:


\begin{conjecture}\label{conj m<n}
If 
 $m\le n$ then 
$f_{\cd(k)}(n,m) = \left\lceil \frac{k+1}{n-m+2} \right\rceil(m-1) +1$.
\end{conjecture}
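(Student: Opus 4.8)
The plan is to prove Conjecture \ref{conj m<n} by the same two-sided strategy used throughout the paper, treating the lower and upper bounds separately and expecting the upper bound to be the substantive difficulty.

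\textbf{Lower bound.} First I would generalize the construction $G_{t,n}$ of Example \ref{mainconstruction} to the regime $m < n$. The idea is that the extra slack $n-m$ in the target size should let us pack more colours per ``gadget'' before rainbow independence is forced. Concretely, set $t = \left\lceil \frac{k+1}{n-m+2}\right\rceil$ and take a disjoint union of gadgets built from cycles-with-chords so that each vertex has degree at most $k$, each admits exactly $t$ independent $n$-sets, and no partial rainbow independent $m$-set survives. The key numerical check is that with the weaker requirement of hitting only $m$ (rather than $n$) vertices, one can afford $t(m-1)$ colours, matching $\left\lceil \frac{k+1}{n-m+2}\right\rceil (m-1)$; I expect the degree bound $2t-2$ from Example \ref{mainconstruction} to generalize to a degree of roughly $(n-m+2)(t-1)$, so that choosing $t$ as above keeps the maximum degree at most $k$. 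Verifying that no rainbow independent $m$-set exists amounts to a pigeonhole argument showing any $m$ vertices drawn from the $t$ repeated classes must either repeat a colour or contain an edge.

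\textbf{Upper bound via reduction.} For the harder inequality $f_{\cd(k)}(n,m) \le \left\lceil \frac{k+1}{n-m+2}\right\rceil(m-1)+1$, I would attempt to mimic the maximal-rainbow-set argument of Theorems \ref{t.colourable} and \ref{t.degree1or2}. Take an inclusion-maximal rainbow independent set $M$; if it represents all the $I_j$, a counting/pigeonhole argument bounding how many colour classes a low-degree graph can force should extract an independent $m$-subset. If some $I_j$ is unrepresented, maximality forces $M \supseteq I_j$, giving a rainbow independent $n$-set (hence $m$-set) directly. The delicate part is the first case: unlike the $k$-colourable setting, bounded degree does not hand us a clean colour partition, so I would instead argue locally, deleting a chosen vertex together with its neighbourhood and inducting, tracking how the parameters $n, m, k$ degrade. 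The factor $n-m+2$ in the denominator strongly suggests that each deletion step should consume the budget at a rate governed by how many of the $n-m$ ``spare'' slots get used up, so the induction should be on $n-m$ (or on $m$) with the degree $k$ decreasing appropriately after each neighbourhood removal.

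\textbf{Main obstacle.} The hard part will be the upper bound, and specifically making the induction close with the exact constant $\left\lceil \frac{k+1}{n-m+2}\right\rceil$ rather than a weaker multiple. The ceiling and the particular combination $n-m+2$ indicate that the extremal configuration is genuinely tight, so any lossy step (for instance, bounding the chromatic number by $\Delta+1$ and invoking Theorem \ref{t.colourable}) will overshoot. I anticipate needing a structural understanding of which bounded-degree graphs are extremal—presumably disjoint unions of the chorded-cycle gadgets from the lower bound—and then an argument that any graph not of this form admits a more efficient rainbow extension. Given that the paper only proves the $m=n$ version (Conjecture \ref{bdd Delta}) for $k \le 2$, I would expect the full $m<n$ statement to likewise be accessible only in special cases (small $k$, or $n-m$ large relative to $k$ so that the ceiling equals $1$ and the bound collapses to $f_{\cd(k)}(n,m)=m$, matching the trivial bound \eqref{trivialbound}), with the general case remaining conjectural.
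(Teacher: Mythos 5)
You have correctly diagnosed the status of this statement: it is a conjecture, and the paper does not prove it in full. What the paper actually establishes is (i) the lower bound $f_{\cd(k)}(n,m) \ge \left\lceil \frac{k+1}{n-m+2} \right\rceil(m-1)+1$ for all $m\le n$, and (ii) the matching upper bound only for $m=2$ and $m=3$ (Theorems \ref{n,2} and \ref{thm: 4,3}), plus the $m=n$, $k\le 2$ case. Your lower-bound sketch lands essentially on the paper's construction: with $r=n-m+2$ and $t=\left\lceil\frac{k+1}{r}\right\rceil$ the paper builds a single repeating graph on $[t]\times\mathbb{Z}_n$ (columns joined by the arcs $b\to b+i$, $0\le i\le r-1$), which is $r(t-1)$-regular — exactly your predicted degree — and takes $m-1$ copies of each of the $t$ columns. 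Two small corrections to your picture: the paper uses one connected gadget with repeated colour classes rather than a disjoint union, and the verification is not a pigeonhole argument but a short computation with the cyclic gaps $b_{i+1}-b_i$ showing that \emph{every} independent $m$-set lies inside a single column. You would need to carry out that computation to make the lower bound complete; as written it is only a plausibility argument.

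On the upper bound, your instinct that the maximal-rainbow-set device of Theorems \ref{t.colourable} and \ref{t.degree1or2} will not close the induction is borne out by the paper: even the $m=3$ case requires a substantially different and heavier argument (the structural Observation \ref{obs: three sets}, the sunflower-component Lemma \ref{lem: ind set comps}, the counting Lemma \ref{lem.manyverts}, and a delicate double-counting of degrees), not a neighbourhood-deletion induction. One further caution: in the regime where the ceiling equals $1$ the conjectured value does collapse to $m$, but only the \emph{lower} bound then becomes trivial via \eqref{trivialbound}; the upper bound $f_{\cd(k)}(n,m)\le m$ still requires proof and does not follow from anything in the paper. So your proposal is a sound assessment of what is and is not provable, and its lower-bound half follows the paper's route, but it does not constitute a proof of either inequality as it stands.
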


\begin{remark}\label{rem.k=2revisited}
For $k=2,~~m=n-1$ we get $f_{\cd(2)}(n,n-1)=n-1$ which is precisely Conjecture \ref{k=2ab}.
For $m=n$, we get $f_{\cd(2)}(n,n)=2n-1$, which is true by Theorem \ref{t.degree1or2}. The mysterious jump from $f(n,n-1)$ to $f(n,n)$ arises in this formulation in a natural way - it is due to the jump from $\left\lceil \frac{k+1}{n-(n-1)+2} \right\rceil = \left\lceil \frac{3}{3} \right\rceil=1$ to $\left\lceil \frac{k+1}{n-n+2} \right\rceil =\left\lceil \frac{3}{2}\right\rceil =2$.

\end{remark}

Here is one direction of the conjecture:

\begin{theorem}
$f_{\cd(k)}(n,m) \ge \left\lceil \frac{k+1}{n-m+2} \right\rceil(m-1) +1$.
\end{theorem}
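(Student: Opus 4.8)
The plan is to construct, for each $k$, $m$ and $n$ with $m \le n$, a graph $G \in \cd(k)$ together with a family of $\left\lceil \frac{k+1}{n-m+2} \right\rceil(m-1)$ independent $n$-sets admitting no partial rainbow independent $m$-set, which by definition of $f_{\cd(k)}(n,m)$ as a minimum establishes the stated lower bound. Writing $t = \left\lceil \frac{k+1}{n-m+2} \right\rceil$, so that the target count is $t(m-1)$, I would build $G$ as a disjoint union of ``clique-blob'' gadgets whose structure forces any rainbow set to use at most $m-1$ colours.

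First I would recall the construction $G_{t,n}$ from Example~\ref{mainconstruction}: the power of a $tn$-cycle in which vertices within cyclic distance $<t$ are joined. There $\ci_n(G_{t,n}) = \{I_1, \dots, I_t\}$, each vertex has degree $2t-2$, and taking $m-1$ copies of each of the $t$ sets $I_j$ gives $t(m-1)$ independent $n$-sets with no rainbow $m$-set (any rainbow set meets each $I_j$ in at most $m-1$ vertices, but more care is needed for general $m$). The governing idea is that the $n$-sets partition into $t$ ``colour classes'' of size $m-1$ each, with any two distinct $I_j$'s mutually ``blocking'' in the sense that picking a vertex from $I_i$ kills too many of $I_j$ to leave an independent rainbow continuation. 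I would verify that the degree $2t-2$ of $G_{t,n}$ satisfies $2t - 2 \le k$ precisely when $t \le \frac{k+2}{2}$, which matches the case $n-m+2 = 2$, i.e. $m = n$; this recovers the bound already used for Conjecture~\ref{bdd Delta}.

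For general $m < n$ the parameter $n-m+2$ enters through how ``thick'' each gadget must be. Here I expect the right construction to replace the $t$-wise overlap of $G_{t,n}$ by a gadget in which each block can tolerate losing up to $n-m$ vertices before a rainbow $m$-set becomes unavoidable; this is exactly why the denominator is $n-m+2$ rather than $2$. Concretely I would take $t = \left\lceil \frac{k+1}{n-m+2} \right\rceil$ copies of a suitable dense gadget on $tn$ (or a comparable number of) vertices, arrange that each vertex has degree $(n-m+2)(t-1) \le k+1-1 = k$ after the ceiling is accounted for, and take $m-1$ copies of each of the $t$ canonical independent $n$-sets. The crux is the counting showing no rainbow independent $m$-set exists: I would argue that a rainbow independent set can draw vertices from at most one of the $t$ gadget-classes without two chosen vertices becoming adjacent, and within a single class only $m-1$ distinct colours are available, so no rainbow set reaches size $m$.

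The main obstacle will be pinning down the exact gadget so that the degree bound is simultaneously tight (equal to $k$ after the ceiling, not merely $\le k$) and the blocking property holds for the full range $m \le n$, including verifying the floor/ceiling arithmetic $\left\lceil \frac{k+1}{n-m+2} \right\rceil \cdot (n-m+2) \ge k+1$ translates correctly into a degree that does not exceed $k$. I would handle this by first treating the extreme cases $m=n$ (recovering $G_{t,n}$) and $m=1$ (where the bound reads $\left\lceil \frac{k+1}{n+1}\right\rceil \cdot 0 + 1 = 1$, trivially forced), then interpolating; the delicate step is confirming that deleting the closed neighbourhood of a chosen vertex in one gadget leaves each other $n$-set with fewer than $m$ vertices, which is where the precise overlap count $n-m+2$ must be checked against the independence structure.
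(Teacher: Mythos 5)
Your overall strategy is the paper's: generalize $G_{t,n}$ to a construction with $t=\left\lceil \frac{k+1}{n-m+2}\right\rceil$ canonical independent $n$-sets, repeat each $m-1$ times, and check a degree bound $r(t-1)\le k$ with $r=n-m+2$ (your arithmetic for that bound is correct). But as written the proposal has a genuine gap: the gadget is never actually defined, and the two steps that constitute the entire proof --- the explicit adjacency rule and the verification that no rainbow independent $m$-set exists --- are deferred as ``obstacles'' rather than carried out. Your structural description is also off: you propose a \emph{disjoint union} of $t$ gadgets, whereas the construction that works is a single connected graph in which the $t$ independent $n$-sets interleave; a disjoint union cannot simultaneously give each class $n$ vertices, keep degrees at $r(t-1)$, and block all rainbow $m$-sets. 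Likewise, ``within a single class only $m-1$ distinct colours are available'' is the easy half; the hard half, which you only gesture at, is that \emph{every} independent $m$-set is confined to a single class.

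For comparison, the paper takes $G$ on vertex set $[t]\times \mathbb{Z}_n$ with $(a,b)\sim (a',b')$ iff $a<a'$ and $b'-b\in\{0,1,\dots,r-1\}$ modulo $n$. This is $r(t-1)$-regular, the columns $I_a=\{a\}\times\mathbb{Z}_n$ are independent $n$-sets, and the rows are cliques. Given an independent $m$-set $\{(a_i,b_i)\}_{i=1}^m$ with $0\le b_1<\dots<b_m\le n-1$ (the $b_i$ are distinct because rows are cliques), if the $a_i$ are not all equal then the cyclic sequence $a_1,\dots,a_m,a_1$ has an ascent; non-adjacency across that ascent forces the corresponding cyclic gap in the $b$-coordinates to be at least $r$, and summing the gaps then exceeds $n$ --- a contradiction. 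This telescoping-gap argument is exactly the ``delicate step'' you name but do not supply, and without the explicit adjacency rule there is nothing to check it against. To complete your proof you would need to write down this (or an equivalent) gadget and run the confinement argument in full.
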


 The examples showing this follow the pattern of Example \ref{mainconstruction}, with some modifications. We describe them explicitly below. Using the terminology of Section 3, they arise naturally as $D$-repeating graphs where the digraph $D$ on vertex set $\mathbb{Z}_n$ contains all arcs from $x$ to $x+i$ for $i=0,1,\dots, r-1$ for some $r$, as in Figure \ref{fig.D(3)construction}.

Specifically, let $r=n-m+2$,
and write $k = r(t-1)+\beta$ for  $\beta\in [0,r)$. Then $t = \frac{k -\beta}{r}+1
= \left\lfloor \frac{k}{r}+1 \right\rfloor
= \left\lceil \frac{k+1}{r} \right\rceil
= \left\lceil \frac{k+1}{n-m+2} \right\rceil$.

Let $G$ be the (repeating) graph on vertex set $[t] \times \mathbb{Z}_n$, with an edge between every pair $(a,b)$ and $(a',b')$ where
\[
a < a' \text{ and }
b'-b \in \{0,1, \dots, r-1\} \text{ (modulo } n).
\]
See Figure~\ref{fig.D(3)construction} for an illustration.
Note that $G$ is $r(t-1)$-regular, and $r(t-1)\leq k$. Certainly, all columns $I_a:= \{a\} \times \mathbb{Z}_n$ are independent $n$-sets.

Claim that every $I \in \ci_m(G)$ is fully contained inside some $I_a$. This way, taking $m-1$ copies of each $I_a$ yields no rainbow independent $m$-set, thus proving $f_{\cd(k)}(n,m) > t(m-1)=\left\lceil \frac{k+1}{n-m+2} \right\rceil(m-1)$.

Take such an $I=\{(a_1, b_1), \dots, (a_m,b_m)\} \in \ci_m(G)$. All rows $[t]\times \{b\}$ are cliques, so the $b_i$'s are distinct. Without loss of generality,
\[
0 \leq b_1 < b_2 < \dots < b_m \leq n-1.
\]
Suppose for contradiction $I$ is not fully contained in any column $\{a\} \times \mathbb{Z}_n$.
Then the $\{a_i\}_{i=1}^m$ are not all equal. So either $a_m<a_1$ or $a_{i-1}< a_{i}$ for some $i$.
In the first case, $b_1-b_m \not \in \{0,1, \dots, r-1\}$ modulo $n$, so $\geq r-n=2-m$, contradicting $b_m \geq b_{m-1}+1\geq \dots \geq b_1 +(m-1)$.
In the latter case, $(a_{i-1}, b_{i-1}) \not\sim (a_{i},b_{i})$ means $b_{i}-b_{i-1} \geq r$, hence
\begin{align*}
n-1 \geq b_m 
&\geq b_{i}+(m-i) \\
&\geq b_{i-1} + (r+m-i) \geq 
b_1 +(r+m-2) \geq n,
\end{align*}
a contradiction.

\begin{figure}
\begin{center}
\begin{tikzpicture}[scale=2.2]
\begin{scope}[rotate=108,xscale=-1]
\node[white, below] at (288:1){123};

		\foreach \x in {0,1,2,...,4} {
		
		\node at (72*\x-18:1.15) {\x};

\draw [ultra thick, ->] (72*\x-18:1) arc [radius=-.15,start angle =72*\x-18, end angle=72*\x+162];
\draw [ultra thick] (72*\x-18:1) arc [radius=.15,start angle =72*\x+162, end angle=72*\x-18];

\begin{scope}[ultra thick,decoration={
    markings,
    mark=at position 0.5 with {\arrow{>}}}
    ] 
    \draw[postaction={decorate}] (72*\x-18:1) to [out=72*\x+119, in=72*\x-9] (72*\x+126:1);
    
    \draw[postaction=decorate] (72*\x-18:1) arc [radius=1,start angle =72*\x-18, end angle=72*\x+54];
\end{scope}

			\draw[fill] (72*\x-18:1) circle [radius=0.05];
		}
		
				\foreach \x in {1} {

\draw [ultra thick, ->] (72*\x-18:1) arc [radius=-.15,start angle =72*\x-18, end angle=72*\x+162];
\draw [ultra thick] (72*\x-18:1) arc [radius=.15,start angle =72*\x+162, end angle=72*\x-18];

\begin{scope}[ultra thick, decoration={
    markings,
    mark=at position 0.5 with {\arrow{>}}}
    ] 
    \draw[postaction={decorate}] (72*\x-18:1) to [out=72*\x+119, in=72*\x-9] (72*\x+126:1);
    
    \draw[postaction=decorate] (72*\x-18:1) arc [radius=1,start angle =72*\x-18, end angle=72*\x+54];
\end{scope}

			\draw[fill] (72*\x-18:1) circle [radius=0.05];
		}

\node at (0,0){$D$};
\end{scope}
\node at (0:1.6){$\Rightarrow$};
	\end{tikzpicture}
\begin{tikzpicture}[scale=2.2]
\begin{scope} [rotate=108]
		\foreach \x in {2,...,5} { \draw [white, ultra thick, ->] (72*\x-18:1) arc [radius=-.15,start angle =72*\x-18, end angle=72*\x+162]; \draw [white, ultra thick] (72*\x-18:1) arc [radius=.15,start angle =72*\x+162, end angle=72*\x-18]; }
\end{scope}
\node[right] at (0.5,0){$G$};
\draw [rounded corners=3pt] (-1,-1) rectangle (-0.5,1);
\draw [rounded corners=3pt] (-0.95,-0.95) rectangle (-0.55,0.95);
\draw [rounded corners=3pt] (-0.9,-0.9) rectangle (-0.6,0.9);

\draw [rounded corners=3pt] (0,-1) rectangle (0.5,1);
\draw [rounded corners=3pt] (0.05,-0.95) rectangle (0.45,0.95);
\draw [rounded corners=3pt] (0.1,-0.9) rectangle (0.4,0.9);
\node at (-0.75, -0.72) (10){};
\node at (-0.75, -0.36) (11){};
\node at (-0.75, 0) (12){};
\node at (-0.75, 0.36) (13){};
\node at (-0.75, 0.72) (14){};

\node at (0.25, -0.72) (20){};
\node at (0.25, -0.36) (21){};
\node at (0.25, 0) (22){};
\node at (0.25, 0.36) (23){};
\node at (0.25, 0.72) (24){};

\foreach \x in {0,1,2,3,4}{
\node [left] at (-1,-0.72+0.36*\x){\x};
}

\node [above] at (-0.75,1) {$a$};
\node [above] at (0.25,1) {$a'$};

\foreach \x in {11,12,13,14,10,21,22,23,24,20}{ 
\filldraw (\x) circle [radius =0.03];
}
 \begin{scope} [gray]
\draw (11)--(21);
\draw (11)--(22);
\draw (11)--(23);

\draw (12)--(22);
\draw (12)--(23);
\draw (12)--(24);

\draw (13)--(23);
\draw (13)--(24);
\draw (13)--(20);

\draw (14)--(24);
\draw (14)--(20);
\draw (14)--(21);

\draw (10)--(20);
\draw (10)--(21);
\draw (10)--(22);

\end{scope}

\begin{scope}[ ultra thick, dashed]

\end{scope}

\end{tikzpicture}
\\
\caption{
$f_{\cd(3)}(5,4) > (m-1)t = 6$: 6 independent 5-sets in a $D$-repeating graph $G$ of max degree 3 do not guarantee a rainbow independent 4-set.
\label{fig.D(3)construction}}
\end{center}

\end{figure}
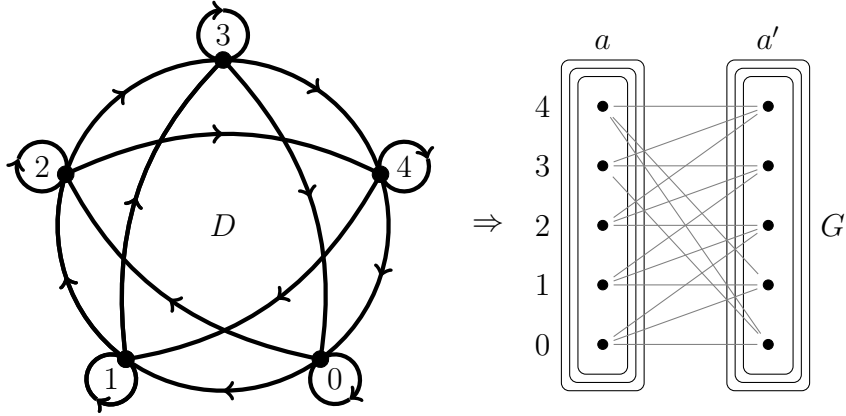

We conclude this section (and the paper) by proving Conjecture \ref{conj m<n} when $m=2$ or $3$, thereby establishing Conjecture \ref{bdd Delta} when $n=2$ or $3$. 

\begin{theorem}\label{n,2}
$f_{\cd(k)}(n,2) = \left\lceil \frac{k+1}{n} \right\rceil +1$.
\end{theorem}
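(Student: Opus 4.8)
The lower bound $f_{\cd(k)}(n,2) \ge \left\lceil \frac{k+1}{n} \right\rceil + 1$ is exactly the $m=2$ case of the preceding theorem (there the construction uses $t = \left\lceil \frac{k+1}{n-m+2} \right\rceil = \left\lceil \frac{k+1}{n} \right\rceil$ columns, and taking $m-1 = 1$ copy of each leaves no rainbow independent pair since every independent $2$-set sits inside a single column). So the entire task is to establish the matching upper bound $f_{\cd(k)}(n,2) \le \left\lceil \frac{k+1}{n} \right\rceil + 1$. Write $q = \left\lceil \frac{k+1}{n} \right\rceil$ and suppose $I_1, \dots, I_{q+1} \in \ci_n(G)$ for some $G \in \cd(k)$; the plan is to produce a rainbow independent pair from these $q+1$ colours.

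First I would dispose of the overlapping case. If $I_i \cap I_j \neq \emptyset$ for some $i < j$, pick $v \in I_i \cap I_j$ and, using $n \ge 2$, a second vertex $w \in I_i \setminus \{v\}$. Both $v,w$ lie in the independent set $I_i$, so $v \not\sim w$; assigning $w$ to represent $I_i$ and $v$ to represent $I_j$ exhibits $\{w, v\}$ as a rainbow independent $2$-set. Hence from now on we may assume the sets $I_1, \dots, I_{q+1}$ are pairwise disjoint.

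With disjointness in hand, the argument is a single pigeonhole count. Fix any $u \in I_1$ and consider $\bigcup_{j \ge 2} I_j$, a set of exactly $qn$ vertices disjoint from $\{u\}$. By the definition of $q$ we have $qn \ge k+1 > k \ge \deg_G(u)$, so $u$ cannot be adjacent to every vertex of $\bigcup_{j \ge 2} I_j$; choosing a non-neighbour $v \in I_j$ with $j \ge 2$ gives a rainbow independent $2$-set $\{u,v\}$ representing $I_1$ and $I_j$. This completes the upper bound, and together with the lower bound yields the claimed equality.

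Since the proof collapses to one pigeonhole step once the sets are disjoint, I do not expect a serious obstacle; the only two points demanding care are the use of $n \ge 2$ to convert an intersection into a genuine rainbow pair, and the elementary inequality $qn \ge k+1$, which is precisely the place where the ceiling $\left\lceil \frac{k+1}{n} \right\rceil$ in the formula is forced.
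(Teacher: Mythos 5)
Your proof is correct and follows essentially the same route as the paper's: dispose of the case of two intersecting colour classes by taking a rainbow pair inside one of them, then in the disjoint case apply the pigeonhole bound $\left\lceil \frac{k+1}{n}\right\rceil \cdot n \geq k+1 > \deg_G(u)$ to find a non-neighbour of a fixed $u \in I_1$ in some other class. The lower bound is, as you note, the $m=2$ instance of the general construction preceding the theorem, which is also how the paper handles it.
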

\begin{proof}
Let $G$ be a graph with maximum degree $k$ and $I_1, \ldots , I_{\left\lceil \frac{k+1}{n} \right\rceil +1} \in \ci_n(G)$.
First note $\left\lceil \frac{k+1}{n} \right\rceil +1 \geq 2$.
If $I_i \cap I_j \neq \emptyset$ for some $i \neq j$, then both $I_i$ and $I_j$ contain a rainbow independent $2$-set.
Otherwise the sets $I_1, \ldots , I_{\left\lceil \frac{k+1}{n} \right\rceil +1}$ are mutually disjoint, so $\vert \bigcup_{j > 1} I_j \vert = \left\lceil \frac{k+1}{n} \right\rceil \times n \geq k+1$.

Let $u \in I_1$.
Since the degree of $u$ in $G$ is at most $k$, there must be a vertex $v \in I_j$ for some $j > 1$ which is not adjacent to $u$.
Then $\{u, v\}$ forms a rainbow independent $2$-set in $G$.
\end{proof}

\begin{theorem}\label{thm: 4,3}
$f_{\cd(k)}(n,3) = 2\times \left\lceil \frac{k+1}{n-1} \right\rceil +1$ for $n\geq 3$.
\end{theorem}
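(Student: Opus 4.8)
The lower bound $f_{\cd(k)}(n,3)\ge 2\lceil\frac{k+1}{n-1}\rceil+1$ is exactly the case $m=3$ of the inequality established just above (here $m-1=2$ and $n-m+2=n-1$), so nothing new is needed in that direction. Writing $q=\lceil\frac{k+1}{n-1}\rceil$, it remains to prove the matching upper bound: any family $I_1,\dots,I_{2q+1}\in\ci_n(G)$ in a graph $G$ with $\Delta(G)\le k$ spans a rainbow independent $3$-set. The plan is an induction on $k$ that combines a multiplicity reduction, a reduction to pairwise-disjoint families, and a maximality argument, with the genuinely hard case isolated at the end.

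First I would dispose of heavily-shared vertices. Suppose (assuming $q\ge 2$) that some vertex $x$ lies in at least $q+1$ of the sets. Puncturing those sets at $x$ yields $q+1$ independent $(n-1)$-sets in $G-x$, still of maximum degree $\le k$; since $f_{\cd(k)}(n-1,2)=q+1$ by Theorem~\ref{n,2}, they span a rainbow independent $2$-set $\{y,z\}$, and as $y,z$ lie in sets containing $x$ they are non-adjacent to $x$. Because $x$ lies in at least $q+1\ge 3$ sets, it can be assigned a colour distinct from those of $y,z$, so $\{x,y,z\}$ is the desired rainbow independent $3$-set. Hence I may assume every vertex lies in at most $q$ sets (the case $q=1$, equivalently $n\ge k+2$, I fold into the disjoint analysis below). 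Next I would remove overlaps. The model sub-case is two equal sets $I_i=I_j$: either some third set meets $I_i$, so three vertices of $I_i$ already form a rainbow independent $3$-set, or $I_i$ is disjoint from all others, and then either some outside vertex has at most $n-2$ neighbours in $I_i$ (again giving a rainbow $3$-set), or every outside vertex has $\ge n-1$ neighbours in $I_i$, in which case deleting $V(I_i)$ drops the maximum degree by at least $n-1$ and the family size by $2$, so the induction hypothesis applies with parameter $k-(n-1)$, for which $q$ drops by exactly $1$. Partial overlaps are handled by the same devices together with a sunflower-type analysis of the pairwise intersections, reducing everything to a pairwise-disjoint family.

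For a pairwise-disjoint family the graph on $\bigcup_j I_j$ is $(2q+1)$-partite with parts of size $n$, a rainbow independent $3$-set is precisely an independent set meeting three parts, and I would run a maximality argument. Let $S$ be a maximum rainbow independent set and suppose for contradiction that $|S|\le 2$; the cases $|S|\le 1$ are immediate from the abundance of vertices, so say $S=\{u,v\}$ with $u\in I_a$ and $v\in I_b$. Maximality forces every vertex of $\bigcup_{c\ne a,b}I_c$ into $N(u)\cup N(v)$, since any such vertex avoiding both could be added in its own colour. As the parts are disjoint these are $(2q-1)n$ distinct vertices, so
\[
(2q-1)n\le |N(u)|+|N(v)|\le 2k\le 2q(n-1)-2,
\]
which rearranges to $n\ge 2q+2$. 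Thus whenever $n\le 2q+1$ we reach a contradiction, conclude $|S|\ge 3$, and are done in this regime.

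The main obstacle is the dense regime $n\ge 2q+2$, where the crude maximality count leaves slack: the extremal configurations (disjoint blow-ups of the repeating construction used for the lower bound) have small union and large degree, so no union or greedy bound can work. Here I expect to need a finer extremal argument exploiting that each vertex must distribute its at most $k$ incident edges among the $\ge 2q$ other parts, and that these edges cannot simultaneously destroy every independent transversal on three parts. Concretely, assuming no rainbow independent $3$-set, for every cross-part non-edge $(a,b)$ the whole of any third part lies in $N(a)\cup N(b)$; summing these constraints against the degree budget $|N(w)\cap B|+|N(w)\cap C|\le k$ and invoking $k\le q(n-1)-1$ should overdetermine the edge counts and produce a contradiction exactly at the threshold $2q+1$. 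Making this counting tight for all $n\ge 2q+2$, rather than merely up to constant factors, is the delicate heart of the proof.
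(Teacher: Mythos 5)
Your write-up is not a proof: you explicitly leave the essential case open. After the (correct) lower bound and the easy reductions, your maximality count for a $2$-element rainbow set $S=\{u,v\}$ gives $(2q-1)n\le 2k\le 2q(n-1)-2$, which only yields a contradiction when $n\le 2q+1$; for $n\ge 2q+2$ you say you ``expect to need a finer extremal argument'' and describe only a heuristic for it. That regime is nonempty and is in fact the generic one (already $q=1$, i.e.\ $k\le n-2$, with $n\ge 4$ falls into it), so the theorem is not proved. The missing idea, which is the heart of the paper's argument, is to maximise not the rainbow set but the quantity $|N(v)\cap I_S|$ over \emph{all} vertices $v$ and \emph{all} $q$-element subsets $S$ of the colour classes. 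Calling the maximum $\ell$ and taking $v\in I_{2q+1}$, $S=[2q]\setminus[q]$, one finds $v'\in I_S$ non-adjacent to $v$ (since $|I_S|\ge k+2$), and then the maximality gives the two one-sided bounds $|N(v')\cap I_{[q]}|\le \ell$ and $|N(v)\cap I_{[q]}|\le k-\ell+s$ (where $s$ accounts for vertices of $I_S\cap I_{[q]}$), so their sum is at most $k+s$ rather than $2k$. A separate count shows $|I_{[q]}\setminus\{v,v'\}|>k+s$, producing the third vertex $v''$. Without this two-parameter maximisation (or an equivalent device) the degree budget is simply not tight enough, exactly as your own inequality shows.

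A secondary gap: your claimed reduction ``to a pairwise-disjoint family'' is asserted, not proved, and it is not what the paper does. The paper only establishes that pairwise intersections have size at most $1$ (Claim~\ref{claim t < 2}) and that no component of the intersection hypergraph contains three or more sets (Claim~\ref{claim nothree}); both require nontrivial double-counting against the lower bounds of Lemma~\ref{lem.manyverts}, and even afterwards the sets may still pairwise intersect in single vertices, which is why the quantity $s=|N(v)\cap I_{[2q]\setminus[q]}\cap I_{[q]}|$ appears in the final count. Your treatment of the sub-case $I_i=I_j$ (puncture, or delete $I_i$ and induct on $k-(n-1)$) is fine as far as it goes, but the general overlap analysis cannot be waved through ``by the same devices,'' and the disjoint case you reduce to is still unproved for $n\ge 2q+2$.
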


Henceforth fix $n \geq 3$. We shall need a few auxiliary results.

\begin{observation}\label{obs: three sets}
Let  $I_1, I_2, I_3 \in \ci_n(G)$,
with $I_1 \cap I_2 \neq \emptyset$. 
Then, in each of the following cases, there is a rainbow independent triple $X$.
\begin{enumerate}[1.]
\item\label{obs 4.5(1)} 
$\{x,y\} \subset I_1 \cap I_2$ and $x' \in I_1 \cap I_3$ (possibly $x'=x$).

\item\label{obs 4.5(2)} 
$I_1 \cap I_2 = \{x\}$, and $I_3$ meets $I_1$ at a vertex $y \neq x$.

\item\label{obs 4.5(3)}
$I_1 \cap I_2 = I_2 \cap I_3 = I_1 \cap I_3 = \{u\}$ and there are 2 nonadjacent vertices $v \in I_1$ and $w \in I_2$.
 
\item\label{obs 4.5(4)} $I_3$ is disjoint from $I_1 \cup I_2$ and there is some $u \in I_3$, 
plus
	\begin{enumerate}
	\item a vertex $v \in I_1 \cap I_2$ which is not adjacent to $u$, and
	\item a vertex $w \neq v$ in $I_1 \cup I_2$ which is not adjacent to $u$.
	\end{enumerate}
\end{enumerate}
\end{observation}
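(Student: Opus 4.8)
The plan is to dispatch the four cases one at a time, in each case writing down an explicit triple consisting of a representative $a_1 \in I_1$, $a_2 \in I_2$, and $a_3 \in I_3$, and then checking the two requirements: that $a_1, a_2, a_3$ are pairwise distinct, and that $\{a_1, a_2, a_3\}$ is independent. The organising principle is that whenever two of the chosen vertices happen to lie in a common $I_i$, their non-adjacency is automatic from the independence of $I_i$; since in every case I can arrange that each pair of chosen vertices either shares some $I_i$ or is non-adjacent by hypothesis, independence comes essentially for free. Consequently the only genuine work lies in ensuring the three representatives are \emph{distinct}, and here the standing assumption $n \geq 3$ (so that each $|I_i| = n \geq 3$) is what I will lean on.

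I would treat the three easy cases first. In Case~2, both $x$ and $y$ lie in $I_1$, so I take $a_2 = x$, $a_3 = y$, and choose $a_1$ to be any vertex of $I_1 \setminus \{x,y\}$ (which exists since $|I_1| \geq 3$); all three lie in $I_1$ and are therefore pairwise non-adjacent. In Case~3 the hypotheses force $u \in I_1 \cap I_2 \cap I_3$, so I take $a_1 = v$, $a_2 = w$, $a_3 = u$: the pair $v,w$ is non-adjacent by assumption, while $u$ shares $I_1$ with $v$ and $I_2$ with $w$; the three are distinct because $v,w$ are vertices other than $u$ and $v = w$ would force $v \in I_1 \cap I_2 = \{u\}$. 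In Case~4 I split according to whether the supplied vertex $w$ lies in $I_1$ or in $I_2$: if $w \in I_1$ I set $a_1 = w$, $a_2 = v$, $a_3 = u$, and if $w \in I_2$ I set $a_1 = v$, $a_2 = w$, $a_3 = u$, in each instance using $v \in I_1 \cap I_2$ to fill the remaining slot. The required non-adjacencies are then either hypotheses~(a) and~(b) or come from two vertices sharing $I_1$ or $I_2$, while the disjointness of $I_3$ from $I_1 \cup I_2$ guarantees that $u$ is distinct from $v$ and $w$.

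The remaining Case~1 is the one where I expect the only real difficulty, and it is purely a matter of distinctness. Here $x, y \in I_1 \cap I_2$ and $x' \in I_1 \cap I_3$, so all three of $x, y, x'$ lie in $I_1$ and are automatically pairwise non-adjacent; the question is merely how many of them coincide. If $x, y, x'$ are all distinct I simply let $y$ represent $I_1$, $x$ represent $I_2$, and $x'$ represent $I_3$. Otherwise $x'$ equals $x$ or $y$, and since $x$ and $y$ play symmetric roles (both lie in $I_1 \cap I_2$) I may assume $x' = x$, so that $x \in I_1 \cap I_2 \cap I_3$. I then represent $I_3$ by $x$ and $I_1$ by $y$, and for $I_2$ I pick some $z \in I_2 \setminus \{x,y\}$, which exists exactly because $|I_2| = n \geq 3$; the triple $\{x,y,z\}$ is independent since all three vertices lie in $I_2$, and its members are distinct by construction. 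Combining the four cases then finishes the argument, the only step needing care being this distinctness bookkeeping in Case~1.
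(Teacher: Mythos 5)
Your proposal is correct and follows essentially the same route as the paper, which in each case exhibits the same explicit triple (a $3$-subset of $I_1$ containing the given intersection points in Cases 1 and 2, and $\{u,v,w\}$ in Cases 3 and 4) and relies on the same two mechanisms: independence via containment in a common $I_i$ or via the hypothesized non-adjacencies, plus $n\ge 3$ for distinctness. The only cosmetic difference is that in the degenerate subcase $x'=x$ of Case 1 you complete the triple inside $I_2$ where the paper completes it inside $I_1$; both work for the same reason.
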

Indeed, in \ref{obs 4.5(1)} (or \ref{obs 4.5(2)}), simply choose any $X \subset I_1$ containing all of $x',x,y$ (resp.\ $x,y$).
In \ref{obs 4.5(3)} and \ref{obs 4.5(4)}, $X:=\{u,v,w\}$ suffices.

This yields structural information regarding the independent sets:

\begin{lemma}\label{lem: ind set comps}
Let $G=(V,E)$ be a graph with a collection $\I$ of independent $n$-sets, 
containing no rainbow independent triple.
Denote by $C_\ci$ the set of connected components of the $n$-uniform hypergraph $(V,\I)$,
and let $\C \in C_\ci$ be one of these components.

\begin{enumerate}
    \item\label{lem 4.12(1)}
    If $\C \supset \{I,J\}$, then every $v \in (\cup \; \ci) \backslash (I \cup J)$ is either adjacent to all of $ I \cap J $, or adjacent to all vertices in $I \cup J$ except for a single vertex in $I \cap J$.
\item\label{lem 4.12(2)}
    If $|\C|=p \geq 3$, then $\C$ forms a sunflower with a core consisting of a single vertex $\{u\}$.
    Moreover, $G[(\cup \C) \backslash \{u\}]$ is a copy of the complete $p$-partite graph $K_{
    n-1,\dots, n-1
    }$, whose parts are the independent sets $I \backslash \{u\}$ for $I \in \C$.

\end{enumerate}
\end{lemma}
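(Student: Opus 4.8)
The plan is to read each of the four cases of Observation~\ref{obs: three sets} in its contrapositive form: since $\ci$ spans no rainbow independent triple, none of those four configurations can occur. First I would distil two purely local consequences. From case~\ref{obs 4.5(1)}: if $|I\cap J|\ge 2$ for some $I,J\in\ci$, then no further set of $\ci$ can meet $I$ or $J$, so $\{I,J\}$ is an entire component; consequently, in any component $\C$ with $|\C|\ge 3$ all pairwise intersections of its sets are singletons. From case~\ref{obs 4.5(2)}: if $I\cap J=\{u\}$ is a singleton, then every set of $\ci$ meeting $I$ meets it \emph{only} at $u$ (otherwise a meeting point $w\neq u$ together with $u$ produces the forbidden triple). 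These two facts are the engine of the whole argument.

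I would prove part~\ref{lem 4.12(2)} first, since part~\ref{lem 4.12(1)} will lean on it. Fix $\C$ with $p\ge 3$; by the first fact all pairwise intersections are singletons. To locate a common core vertex, pick adjacent $I_1,I_2\in\C$, write $I_1\cap I_2=\{u\}$, and propagate $u$ through the (connected) intersection graph of $\C$: by the second fact every set meeting $I_1$ contains $u$, and then an induction along a path $I_1=K_0,K_1,\dots,K_\ell$ shows $u\in K_i$ for all $i$ --- at each step $K_{i-1}\cap K_i=\{u\}$ (both contain $u$ and the intersection is a singleton), so by the second fact every set meeting $K_i$, in particular $K_{i+1}$, contains $u$. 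Hence $u\in\bigcap_{I\in\C}I$, every pairwise intersection equals $\{u\}$, and $\C$ is a sunflower with core $\{u\}$ and petals $I\setminus\{u\}$ of size $n-1$. The petals are pairwise disjoint and each is independent; to obtain the complete $p$-partite structure I would invoke case~\ref{obs 4.5(3)}: given $v\in I\setminus\{u\}$ and $w\in J\setminus\{u\}$ with $I\neq J$ in $\C$, choosing any third set $K\in\C$ (which exists since $p\ge 3$) forces $v\sim w$, for otherwise $\{u,v,w\}$ would be a rainbow independent triple.

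For part~\ref{lem 4.12(1)}, take $v\in(\cup\,\ci)\setminus(I\cup J)$ and a set $K\in\ci$ with $v\in K$; note $K\neq I,J$. If $K$ is disjoint from $I\cup J$, then the claim is exactly the contrapositive of case~\ref{obs 4.5(4)} applied to $(I,J,K)$ with the vertex $v$: either $v$ is adjacent to all of $I\cap J$, or its unique non-neighbour in $I\cup J$ lies in $I\cap J$. If instead $K$ meets $I\cup J$, then $K$ lies in the same component as $I$ and $J$, so $K\in\C$ and $|\C|\ge 3$; by part~\ref{lem 4.12(2)}, $v$ is a petal vertex, nonadjacent only to the core $u\in I\cap J$ and adjacent to every other petal vertex, hence to all of $(I\cup J)\setminus\{u\}$ --- the second alternative.

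I expect the propagation to a single core vertex in part~\ref{lem 4.12(2)} to be the main obstacle: one must combine connectivity of the component with the ``transmission'' of the meeting vertex from case~\ref{obs 4.5(2)}, taking care that the inductive step always has two consecutive sets already known to share $u$. It is also worth recording that part~\ref{lem 4.12(2)} is proved without reference to part~\ref{lem 4.12(1)}, so the order above is not circular, and that $n\ge 3$ is used throughout (for instance to extract three distinct vertices when applying case~\ref{obs 4.5(1)}).
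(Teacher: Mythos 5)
Your proof is correct and follows essentially the same route as the paper's: all four parts of Observation~\ref{obs: three sets} are read in contrapositive, with case~\ref{obs 4.5(1)} bounding intersections, case~\ref{obs 4.5(2)} propagating the core vertex through the connected component, case~\ref{obs 4.5(3)} giving the complete multipartite structure, and case~\ref{obs 4.5(4)} giving part~(1). Your treatment of part~(1) is in fact slightly more careful than the paper's one-line citation, since you separately handle the case where the set containing $v$ meets $I \cup J$ (where case~\ref{obs 4.5(4)} does not directly apply and one falls back on part~(2)); the only quibble is that there $v$ need not be nonadjacent to the core $u$, but either way one of the two stated alternatives holds.
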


\begin{proof}
(1) follows from Observation ~\ref{obs: three sets}\eqref{obs 4.5(4)}.

Next we prove (2). First note Observation ~\ref{obs: three sets}\eqref{obs 4.5(1)} means any distinct $I, J \in \C$ share $\leq 1$ vertex. Suppose in particular that $I \cap J = \{u\}$. Next, 
by Observation ~\ref{obs: three sets}\eqref{obs 4.5(2)}, any other $K \in \C$ meeting $I \cup J$ must do so in precisely $\{u\}$, and by (hypergraph) connectedness it follows $\C$ is a sunflower with core $\{u\}$. Finally, Observation ~\ref{obs: three sets}\eqref{obs 4.5(3)} shows any two non-$u$-vertices from distinct independent sets in $\C$ are adjacent.
\end{proof}

We will also make use of the following:
\begin{lemma}\label{lem.manyverts}
Let $\J$
be a family of independent $n$-sets in a graph with no rainbow independent triple.
Suppose
$\J$ satisfies $|I \cap J| \leq t$ for every distinct $I, J \in \J$.
Then
\[
|\cup \J| \geq 
|\J| \cdot  (n-t/2) +
    \sum_{|\C| \neq 2} \big( |\C| \cdot (t/2-1 \big)+1 \big),
\]
where $C_\J$ is the collection of all components in the $n$-uniform hypergraph whose edge set is $\J$.
In particular,
$| \cup \J|$ is at least:
\begin{enumerate}[(a)]
    \item $(2n-t)( |\J|+1)/2  - 
    (n-t) $, if $t \geq 2$ and $|\J|$ is odd; or
    \item $(n-1)|\J|$, if $t = 1$; or
    \item $(2n-1)|\J|/2$, if $t \leq 1$ and there is no component in $\J$ of size $\geq 3$.
\end{enumerate}
\end{lemma}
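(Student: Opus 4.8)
The plan is to split the $n$-uniform hypergraph $(\cup\J,\J)$ into its connected components $\C \in C_\J$ and to bound $|\cup\C|$ for each separately, exploiting the structure supplied by Lemma~\ref{lem: ind set comps}. Since distinct components are vertex-disjoint, $|\cup\J| = \sum_{\C \in C_\J}|\cup\C|$, so it is enough to bound each summand and add. I would record $|\cup\C|$ according to the size $p=|\C|$ of the component. When $p=1$, say $\C=\{I\}$, clearly $|\cup\C|=n$. When $p=2$, say $\C=\{I,J\}$, the two sets must share a vertex (else they would lie in different components), so $1\le |I\cap J|\le t$ and $|\cup\C|=2n-|I\cap J|\ge 2n-t$. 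When $p\ge 3$, part (2) of Lemma~\ref{lem: ind set comps} says $\C$ is a sunflower whose core is a single vertex with pairwise disjoint petals, whence $|\cup\C|=p(n-1)+1$.

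Next I would match these three values against the claimed right-hand side term by term. Writing $|\J|=\sum_\C p$, the main term distributes as $p(n-t/2)$ over each component, and the correction $p(t/2-1)+1$ is added exactly when $p\ne 2$. A one-line simplification shows the per-component contribution of the right-hand side equals $n$ when $p=1$, equals $2n-t$ when $p=2$, and equals $p(n-1)+1$ when $p\ge 3$. Comparing with the previous paragraph, every component meets its contribution with equality, except that size-$2$ components satisfy only the inequality $|\cup\C|\ge 2n-t$. Summing over $\C\in C_\J$ therefore yields the main inequality.

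From the main bound the three ``in particular'' estimates follow by elementary algebra. For (c) I would invoke the bound with parameter $1$ (legitimate, since $t\le 1$ forces all intersections to have size at most $1$): with no component of size $\ge 3$ the correction sum ranges only over singleton components and is nonnegative, so the bound reduces to $|\J|(n-\tfrac12)=(2n-1)|\J|/2$. Case (b) is cleanest proved directly from the per-component values, since $n\ge n-1$, $2n-1\ge 2(n-1)$ and $p(n-1)+1\ge p(n-1)$ give $|\cup\C|\ge |\C|(n-1)$ for every component, and summing gives $(n-1)|\J|$. For (a) the target simplifies to $(n-t/2)|\J|+t/2$, so it remains to check $\sum_{|\C|\ne 2}\big(|\C|(t/2-1)+1\big)\ge t/2$.

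The one delicate point---and the place I expect the argument to need genuine care rather than bookkeeping---is this last inequality in case (a). Here $t\ge 2$ makes $g(p):=p(t/2-1)+1$ nondecreasing, with $g(1)=t/2$ and $g(p)\ge g(3)=3t/2-2\ge t/2$ for $p\ge 3$; thus every non-size-$2$ component already contributes at least $t/2$. The parity hypothesis then supplies such a component: since size-$2$ components contribute an even amount to $|\J|=\sum_\C|\C|$ and $|\J|$ is odd, the non-size-$2$ components have odd total size, hence at least one of them exists. This completes (a), and with it the lemma.
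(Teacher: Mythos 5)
Your proposal is correct and follows essentially the same route as the paper's proof: decompose $\J$ into hypergraph components, compute $|\cup\C|$ as $n$, at least $2n-t$, or $p(n-1)+1$ according to $|\C|=1$, $2$, or $\geq 3$ (the last via Lemma~\ref{lem: ind set comps}), match these term by term against the right-hand side, and then derive (a) from the parity of $|\J|$ forcing a non-size-$2$ component whose correction term is at least $t/2$. The only cosmetic differences are that you prove (b) directly from the per-component bounds rather than from the main inequality and substitute $t'=1$ in (c); both are harmless.
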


\begin{proof}
For a single component $\C \in C_\J$, 
observe:
\begin{itemize}
    \item if $|\C|=1$, then $|\cup \C|=n=(n-1) |\C|+1$;
    \item if $|\C|=2$, say $\C=\{I,J\}$, then $|\cup \C| = |I| + |J| - |I \cap J| \geq 2n-t =(n-t/2) |\C|$ by definition of $t$; and
    \item if $|\C| \geq 3$, then 
    Lemma~\ref{lem: ind set comps}\eqref{lem 4.12(2)} gives
    $|\cup \C|=(n-1)|\C|+1$.
\end{itemize}

Putting this all together, 
\begin{align*}
|\cup \J|=  \sum_{\C \in C_\J}|\cup \C|
&= \sum_{|\C|\geq 3}|\cup \C|
    + \sum_{|\C|=2}|\cup \C|
    +  \sum_{|\C| = 1}|\cup \C| \\
& \geq   \sum_{|\C|=2} (n-t/2)|\C| 
    + \sum_{|\C| \neq 2} \big((n-1)|\C|+1 \big)  \\
&=  \sum_{\C \in C_\J}  (n-t/2)|\C|
    +  \sum_{|\C| \neq 2} \big( |\C| \cdot (t/2-1 \big)+1 \big)  \\
& = |\J| \cdot  (n-t/2) +
    \sum_{|\C| \neq 2} \big( |\C| \cdot (t/2-1 \big)+1 \big).
\end{align*}

For (a), when $\sum |\C| = |\J|
$ is odd, there is at least some $\C_1 \in C_\J$ which is not of size 2. As all other terms in the sum are $\geq 0$ provided $t \geq 2$, we infer
\[
|\cup \J|
\geq
|\J| \cdot  (n-t/2)  + |\C_1| \cdot (t/2 - 1) + 1
\geq (2n-t)(|\J|+1)/2-n+t,
\]
where the latter inequality uses $|\C_1| \geq 1$.

For (b), $t=1$ gives
\[
|\cup \J|
\geq (n-1/2)|\J| - \frac{1}{2}\sum_{|\C| \neq 2}|\C|
\geq (n-1/2)|\J| - \frac{1}{2}|\J|
= (n-1)|\J|.
\]
For (c), all terms in the sum are $\geq 0$ since $|\C|=1$, so deduce $|\cup \J| \geq (2n-t)|\J|/2 \geq (2n-1) |\J|/2$.
\end{proof}

\begin{proof}[Proof of Theorem~\ref{thm: 4,3}]

Take $G =(V,E)$ of maximum degree $k$ with a prescribed collection $\I = \{I_1, \dots, I_{2q+1}\}$ of independent $n$-sets, where $q= \left\lceil \frac{k+1}{n-1} \right\rceil$.
	Assume, for contradiction, that $G$ has no rainbow independent triple.

    \smallskip
    
	Let $t := \max\{|I_i \cap I_j|: i \neq j\}$.
    Without loss of generality, we may assume $t=|I_1 \cap I_2|$.
    Write $\ithree$ for $\cup_{j\geq 3} I_j$, and
    let \[A := \{v \in 
\ithree
    : I_1 \cap I_2 \subseteq N(v) \}
 \;\text{and}\;B:=
 \ithree
\setminus A.\]

Note that every $u \in B$ is still adjacent to $t-1$ vertices in $I_1 \cap I_2$ by Lemma \ref{lem: ind set comps}\eqref{lem 4.12(1)}.
	
	\begin{claim}\label{claim t < 2}
	$t \leq 1$.
	\end{claim}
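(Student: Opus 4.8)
The plan is to argue by contradiction: assuming $t \geq 2$, I would show that $G$ must contain a vertex of degree exceeding $k$. First I would record what $t\geq 2$ already buys from the earlier results. Fixing two vertices of $I_1\cap I_2$, Observation~\ref{obs: three sets}\eqref{obs 4.5(1)} forbids any $I_j$ with $j\geq 3$ from meeting $I_1$ or $I_2$, so $\ithree$ is disjoint from $I_1\cup I_2$; and Lemma~\ref{lem: ind set comps}\eqref{lem 4.12(1)} says each vertex of $\ithree$ is adjacent to at least $t-1$ of the $t$ vertices of $I_1\cap I_2$. This is precisely the partition $\ithree=A\cup B$, with $A$ seeing all of $I_1\cap I_2$ and each vertex of $B$ missing exactly one.

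The crucial extra input I would extract is that $B$-vertices have large degree. Write $I_1^\circ=I_1\setminus I_2$ and $I_2^\circ=I_2\setminus I_1$, each of size $n-t$. If $v\in B$ misses $w\in I_1\cap I_2$, then for any $z\in I_1^\circ$ the triple $\{w,z,v\}$ (representing $I_2$, $I_1$, and the set of $\ithree$ containing $v$) is independent unless $z\sim v$; since a rainbow triple is forbidden, $v$ is adjacent to all of $I_1^\circ$, and symmetrically to all of $I_2^\circ$. Consequently any single vertex of $I_1^\circ$ is adjacent to every vertex of $B$, so $|B|\leq k$ whenever $I_1^\circ\neq\emptyset$.

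With this the count closes. Since the vertices of $I_1\cap I_2$ have no neighbour inside $I_1\cup I_2$, counting the edges they send into $\ithree$ gives $t|A|+(t-1)|B|\leq tk$; combined with $|B|\leq k$ this yields the key upper bound
\[
|\ithree|=|A|+|B|=\frac{1}{t}\bigl(t|A|+(t-1)|B|\bigr)+\frac{1}{t}|B|\leq \frac{1}{t}\cdot tk+\frac{1}{t}k=\frac{t+1}{t}\,k .
\]
On the other side, $I_3,\dots,I_{2q+1}$ are $2q-1$ independent $n$-sets with pairwise intersections at most $t$, and $2q-1$ is odd, so Lemma~\ref{lem.manyverts}(a) gives $|\ithree|\geq (2n-t)q-(n-t)$. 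Using $k\leq (n-1)q-1$ (from $q=\lceil\frac{k+1}{n-1}\rceil$) together with $q\geq 2$, a short computation shows the lower bound strictly exceeds the upper bound, the desired contradiction; here $q\geq 2$ is guaranteed because $B\neq\emptyset$ produces a $B$-vertex of degree $\geq 2(n-t)+(t-1)=2n-t-1\leq k$, whence $\frac{k+1}{n-1}>1$. Two boundary cases, disposed of first, lie outside this computation: if $B=\emptyset$ then $|\ithree|=|A|\leq k$ directly, and comparison with the same Lemma~\ref{lem.manyverts}(a) bound (valid already for $q\geq 1$) forces $(n-t+1)q\leq n-t-1$, impossible; if $t=n$, i.e.\ $I_1=I_2$, then $I_1^\circ=\emptyset$ and instead the cruder inequality $(t-1)|\ithree|\leq tk$ already beats the lower bound $nq$.

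The step I expect to be the main obstacle is obtaining an upper bound on $|\ithree|$ strong enough to beat Lemma~\ref{lem.manyverts}(a): the naive estimate $(t-1)|\ithree|\leq tk$ coming only from the degrees of $I_1\cap I_2$ gives $|\ithree|\leq \frac{t}{t-1}k$, which for $t=2$ and $n\geq 4$ is too weak. Improving the constant from $\frac{t}{t-1}$ to $\frac{t+1}{t}$ via the high-degree property of $B$ is exactly what makes the inequalities cross, and the delicate part is the bookkeeping that pins down precisely when $B\neq\emptyset$ (and hence $q\geq 2$) versus the degenerate cases $B=\emptyset$ and $t=n$.
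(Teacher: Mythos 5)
Your proof is correct and follows essentially the same route as the paper's: the same decomposition $\ithree = A \cup B$, the same double count $t|A|+(t-1)|B|\le tk$, the same lower bound on $|\ithree|$ from Lemma~\ref{lem.manyverts}(a), and the same two degree observations (each $B$-vertex has degree at least $2n-t-1$, and each vertex of $I_1\setminus I_2$ is adjacent to all of $B$, giving $|B|\le k$). The only difference is organizational: the paper first uses the weaker bound $|B|\le tk/(t-1)$ to force $t=2$ and only then invokes $|B|\le k$, whereas you apply $|B|\le k$ uniformly to get $|\ithree|\le\frac{t+1}{t}k$ and dispatch all $t\in[2,n]$ (plus the boundary cases $B=\emptyset$ and $t=n$) in a single comparison with the lower bound.
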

	\begin{proof}
		Suppose $t \geq 2$. By Lemma~\ref{lem: ind set comps}\eqref{lem 4.12(2)},
		$\ithree$
		is disjoint from $I_1 \cup I_2$.
 
    
    \smallskip
    
    By the above,  double-counting the degree sum of the vertices in $I_1 \cap I_2$ yields:
    \begin{align}\label{eq.doublecount}
   t|A| + (t-1)|B| = \sum_{u \in I_1\cap I_2} \deg_G(u) \leq tk.    
    \end{align}
    On the other hand, Lemma \ref{lem.manyverts}(a) gives
    \begin{align}\label{4.11 eq2}
    |A|+|B|  = | \ithree  |   
    \geq  
    (2n-t)q - n + t
    \geq
    (2n-t)\frac{k+1}{n-1} - n + t.
    \end{align}
 
    Thus,
\begin{align*}
(n-1)|B|
&\geq t\big( (2n-t)(k+1)-(n-t)(n-1)-k(n-1) \big) \\
&=t\big( k(n-t+1)+(2n-t)-(n-t)(n-1) \big) \\
(*)&=t\big( k(n-t+1)-(n-t)(n-2) +n \big) \\
&=t\big( (n-t)(k-n+2)+k +n \big).
\end{align*}

Since $n \leq |\ithree| = |A| \leq k$ by \eqref{eq.doublecount},  $B \neq \emptyset$. Plus, any $v \in B$ is adjacent to all but 1 vertex of $I_1 \cup I_2$ by Lemma ~\ref{lem: ind set comps}\eqref{lem 4.12(1)}, so it follows
\[
k \geq |I_1 \cup I_2| -1 = 2n-t-1.
\]

Multiplying $(*)$ by $(t-1)/t$, and recalling $|B|(t-1)\leq 
tk$ from \eqref{eq.doublecount}, deduce
\begin{align*}
    0
    &\geq
k \big( (n-t+1)(t-1)-(n-1)\big) -(t-1)(n-t)(n-2)+n(t-1) \\
& = k (n-t)(t-2) -(t-1)(n-t)(n-2)+n(t-1) \\
& = (n-t) \big( k(t-2) -(t-1)(n-2)\big) +n(t-1)\\
& \geq (n-t) \big( (2n-t-1)(t-2) -(t-1)(n-2)\big) +n(t-1)\\
& = (n-t)^2(t-3) +n(t-1).
\end{align*}

As $t\geq 2$, $n(t-1)>0$. The above then implies $t<3$, namely $t=2$. Moreover, $n-t \neq 0 \Rightarrow n >t$, so $I_1 \setminus I_2 \neq \emptyset$, and any $u \in I_1 \setminus I_2$ is adjacent to all of $B$, thereby strengthening the previous upper bound on $|B|$ from $tk/(t-1)$ to $\Delta(G)=k$.

So we may deduce something stronger from $(*)$ directly:
\begin{align*}
0 & \geq 
k \big( (n-t+1)t-(n-1)\big) -t(n-t)(n-2)+nt \\
& = k\big( 2(n-1)-(n-1)\big)-2(n-2)^2+2n \\
&=(n-1)(k-2n+8) \\
& \geq (n-1) (2n-3-2n+8) = 5(n-1),
\end{align*}
a contradiction.

\end{proof}

As in Lemmas \ref{lem: ind set comps} and \ref{lem.manyverts}, let $C_{\ci}$ denote the set of connected components of the $n$-uniform hypergraph $\ci$. 
	\begin{claim}\label{claim nothree}
    There is no $\C \in C_\ci$ with $\geq 3$ independent sets.
	\end{claim}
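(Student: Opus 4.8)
The plan is to argue by contradiction: assume some component $\C \in C_\ci$ consists of $p \ge 3$ independent sets, and produce either a rainbow independent triple or a violation of $\Delta(G) \le k$. First I would invoke Lemma~\ref{lem: ind set comps}\eqref{lem 4.12(2)}: since $|\C| = p \ge 3$, the sets of $\C$ form a sunflower with single-vertex core $\{u\}$, and the petals $I \setminus \{u\}$ ($I \in \C$) are the parts of a complete $p$-partite graph $K_{n-1,\dots,n-1}$ on $(\cup\C)\setminus\{u\}$. Consequently every petal vertex is adjacent to all $(p-1)(n-1)$ vertices in the other petals, so $(p-1)(n-1) \le \Delta(G) \le k$. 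Since $q = \lceil \frac{k+1}{n-1}\rceil$ satisfies $q(n-1) \ge k+1 > (p-1)(n-1)$, this forces $p \le q$; in particular $\J := \ci \setminus \C$ is nonempty (indeed $|\J| = 2q+1-p \ge q+1$), and $\cup\J$ is disjoint from $\cup\C$ since $\C$ is an entire component.

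The crux is an upper bound on $|\cup\J|$ coming from the absence of a rainbow triple. I would observe that if some $x \in \cup\J$, lying in a set $I_j \in \J$, satisfies both $x \not\sim u$ and $x \not\sim v$ for some petal vertex $v \in I_i \setminus \{u\}$, then $\{u,v,x\}$ is a rainbow independent triple: it is independent since $u \not\sim v$ (both in $I_i$), $u \not\sim x$, and $v \not\sim x$; and it is rainbow because we may represent $I_i$ by $v$, represent $I_{i'}$ by the core $u$ for any $i' \ne i$ in $[p]$ (such $i'$ exists as $p \ge 2$), and represent $I_j$ by $x$ with $j \notin [p]$. Avoiding all such triples thus forces every $x \in \cup\J$ to be either a neighbor of $u$, or a common neighbor of all $p(n-1)$ petal vertices. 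The first kind number at most $\deg(u) \le k$; the second kind are all adjacent to any fixed petal vertex $v_0$, which already spends $(p-1)(n-1)$ of its degree inside $\C$, leaving at most $k-(p-1)(n-1)$ such neighbours outside $\C$. Hence $|\cup\J| \le 2k - (p-1)(n-1)$.

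For the matching lower bound I would count $|\cup\J|$ componentwise, in the spirit of Lemma~\ref{lem.manyverts}: every component $\C'$ of the hypergraph $(\cup\J,\J)$ satisfies $|\cup\C'| \ge |\C'|(n-1)+1$ --- immediate for $|\C'|=1$, a consequence of $t \le 1$ (Claim~\ref{claim t < 2}) for $|\C'|=2$, and again a consequence of Lemma~\ref{lem: ind set comps}\eqref{lem 4.12(2)} for $|\C'|\ge 3$. Summing over the (at least one) components gives $|\cup\J| \ge |\J|(n-1)+1 = (2q+1-p)(n-1)+1$. Combining the two bounds and cancelling the $(p-1)(n-1)$ terms collapses everything to $2q(n-1)+1 \le 2k$, contradicting $q(n-1) \ge k+1$. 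I expect the main obstacle to be the upper bound on $|\cup\J|$: one must exploit the \emph{complete} multipartite structure of the petals (not merely their independence) both to bound petal-vertex degrees and to guarantee that the core $u$ supplies a spare colour making $\{u,v,x\}$ genuinely rainbow; the closing numerology is then routine given the defining inequality for $q$.
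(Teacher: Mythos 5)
Your proof is correct and follows essentially the same route as the paper's: after invoking Lemma~\ref{lem: ind set comps}\eqref{lem 4.12(2)} to get the single-vertex core $u$ and the complete multipartite petal structure, you split $\cup\J$ into neighbours of $u$ (at most $k$ of them) and vertices forced to be adjacent to every petal vertex (at most $k-(p-1)(n-1)$ of them, by the degree of one fixed petal vertex), exactly as the paper does with its sets $A'$ and $B'$. The concluding comparison with the componentwise lower bound of Lemma~\ref{lem.manyverts} is also the paper's argument, so the proposal is a faithful reconstruction with only cosmetic differences (re-deriving Observation~\ref{obs: three sets}\eqref{obs 4.5(4)} inline and noting $p\le q$ explicitly).
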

	\begin{proof}
Suppose  $|\C|=p \geq 3$, without loss of generality $\C=\{I_{2q+2-p}, \dots, I_{2q+1}\}$.
By Lemma \ref{lem: ind set comps}\eqref{lem 4.12(2)}, there is a vertex $u$ present in all these $I_j$'s such that
$W:= (\cup \C) \backslash \{u\}$
induces a complete $p$-partite graph whose parts are $I_j \setminus \{u\}$, $2q+2-p \leq j \leq 2q+1$.

	As before, we write $I_S$ for $\cup_{i\in S}I_i$. Let \[A' = 
I_{[2q+1-p]}
	\cap  N(u)
 \;\text{and}\; B'=
 I_{[2q+1-p]}
 \setminus N(u).\]
    Clearly $|A'| \leq d(u) \leq k$.
    To bound $|B'|$, pick $u' \in I_{2q+1} \setminus\{u\}$. Note $N(u') \supset W \setminus I_{2q+1}$ and $N(u') \supset B'$ by Lemma ~\ref{lem: ind set comps}\eqref{lem 4.12(1)}. So
\[
k \geq d(u') \geq \sum_{j=2q+2-p}^{2q}|I_j \setminus\{u\}|+|B'|
=(p-1)(n-1)+|B'|.
\]
    Hence we obtain
    \[\vert
I_{[2q+1-p]}
    \vert = |A'| + |B'| \leq 2k - (p-1)(n-1).\]
    But this contradicts the lower bound given by Lemma \ref{lem.manyverts}(b):
    \[\vert
    I_{[2q+1-p]}
    \vert \geq (n-1)(2q+1-p) \geq 2k+2 - (p-1)(n-1).\]
	\end{proof}

Among all $v \in I_{[2q+1]}$ and sets $S \subset [2q+1]$ of size $q$, choose the pair maximising the quantity
$	\big\vert N(v) \cap I_S \big\vert$ 
(recalling $I_S:=\cup_{i \in S}I_i$).
	Without loss of generality, assume $v \in I_{2q+1}$ and $S=[2q] \setminus [q]$, and write this maximum as $\ell:= |N(v) \cap I_{[2q] \setminus [q]}| \leq k$.
	 Lemma \ref{lem.manyverts}(c), together with Claims \ref{claim t < 2} and \ref{claim nothree}, give
	\[|I_{[2q] \setminus [q]}| \geq \left\lceil (2n-2)\frac{q}{2} + \frac{q}{2} \right\rceil
	\geq \left\lceil k+1 + \frac{q}{2} \right\rceil
	\geq k+2
	.\]
	Hence there exists a vertex $v' \in I_{[2q] \setminus [q]} \setminus \{v\}$ not adjacent to $v$. We now focus our efforts on showing that $I_{[q]} \backslash (N(v,v') \cup \{v,v'\}) \neq \emptyset$, where $N(v,v')=N(v) \cup N(v')$. We now put work into  removal of $v$ and $v'$.
	
	Let \[s=|N(v) \cap I_{[2q] \setminus [q]} \cap I_{[q]}|
	.\]
	Then, as $N(v) \subset I_{[2q]}=I_{[2q]\setminus[q]} \cup I_{[q]}$, \begin{align*}
	|N(v) \cap I_{[q]}| 
    = |N(v)| - |N(v) \cap I_{[2q] \setminus [q]}| + s
	\leq
	k - \ell + s.
	\end{align*}
	Write
	$\{u_1, u_2, \dots, u_{s'}\}$ for the elements of $I_{[2q] \setminus [q]} \cap I_{[q]}$ where
	\[s':= |I_{[2q] \setminus [q]} \cap I_{[q]}| \geq s,\]
	then also
    $s' \geq s+1$ if $v' \in I_{[q]}$
	(since $v' \in I_{[2q] \setminus [q]} \backslash N(v)$ already).
	
	Each $u_i \in I_{[q]} \cap I_{[2q] \setminus [q]}$ and hence $u_i \in I_{j_i} \cap I_{j_i'}$ for some $j_i \in [q], j'_i \in [2q] \setminus [q]$. As $I_{j_i}$ already meets $I_{j_i'}$, it meets no other $I_j$ by Claim \ref{claim nothree}.

    Moreover, if $j_i=j_{i'}$, then 
    $I_{j_i}=I_{j_{i'}}$ simultaneously meets $I_{j'_{i}}$ and $I_{j'_{i'}}$, hence $I_{j'_i}=I_{j'_{i'}}$. By Claim \ref{claim t < 2}, all intersections between $I_j$'s have size at most one, so
    \[
    \{u_i\} = I_{j_i} \cap I_{j'_i} = I_{j_{i'}} \cap I_{j'_{i'}}
    =\{u_{i'} \},
    \]
	and $i = i'$ follows. Hence these $\{I_{j_i}\}_{i=1}^{s'}$ are all distinct.

Writing $S'$ for $\{j_1, \dots, j_{s'}\}$, these last two paragraphs show $|I_{S'}|=ns'$, and that $I_{S'}$ is disjoint from $I_{[q] \setminus S'}$. Hence
	\begin{align*}
	    |I_{[q]}|  &= ns' + |I_{[q] \backslash S'}| \\
\text{(by Lemma \ref{lem.manyverts}(c)) }	    &\geq ns' + \left\lceil (2n-1)|[q] \backslash S'|/2\right\rceil \\
	    & = ns' + (n-1)(q-s') +  \left\lceil  \frac{q-s'}{2} \right\rceil \\
        &\geq k + 1 + s' + \left\lceil \frac{q-s'}{2} \right\rceil.
	\end{align*}

	By the maximality property of $\ell$ 
	\[
	|N(v') \cap I_{[q]}| \leq \ell,
	\]
	while 	$|N(v) \cap I_{[q]}| \leq k - \ell + s$ from before. Adding these gives 
	\begin{equation}\label{eq.fewneighboursinIq}
	|I_{[q]} \cap N(v,v')| \leq k+s.
	\end{equation}

	If $s'<q$, then
	\[|I_{[q]} \setminus \{v,v'\}|  \geq \left\lbrace \begin{array}{ll} k+s' & \text{if }v' \in I_{[q]} \\ k+s'+1 & \text{if }v' \notin I_{[q]} \end{array}\right.  > k+s.
	\]
	 	Contrarily, if $s' = q$, then $S'=[q]$, so $I_1, \dots, I_q$ are pairwise disjoint,
	 	and each meets exactly one of $I_{q+1}, \dots, I_{2q}$.
	 	$v \not\in I_{[q]}$ as $I_{2q+1}$ cannot meet $I_{[q]}$ by Claim \ref{claim nothree}. Moreover,
	\[
	|I_{[q]} \setminus \{v,v'\}|=
	\left\lbrace \begin{array}{lllc}
	|I_{[q]}|-1 &= nq-1 &\geq k+q 
	&\text{if }v' \in I_{[q]} \\
	|I_{[q]}| &= nq &\geq k+1+q 
		&\text{if }v' \not\in I_{[q]} \\
	\end{array}
	\right. >k+s.
	\]

In either case, by \eqref{eq.fewneighboursinIq}, there exists a vertex $v'' \in I_{[q]} \setminus \{v, v'\}$ which is adjacent to neither $v$ nor $v'$. Thus $\{v, v', v''\}$ is a rainbow independent triple, as witnessed by $v \in I_{2q+1}, v' \in I_{[2q]\setminus[q]},$ and $v'' \in I_{[q]}$.

\end{proof}

\section*{Acknowledgements}
We are indebted to Alan Lew for the observation that the value of $f_{\cf(K_r)}(n,m)$ is determined by Ramsey numbers.

\bigskip

\medskip


\begin{thebibliography}{99}

\bibitem{AB} R. Aharoni, E. Berger, Rainbow matchings in $r$-partite $r$-graphs, Electron. J. Combin. 16 (2009) \#R119.
\bibitem{ABCHS} R. Aharoni, E. Berger, M. Chudnovsky, D. Howard, P. Seymour, Large rainbow matchings in general graphs, arXiv: 1611.03648v0.

\bibitem{ABK} R. Aharoni, E. Berger, O. Kfir, Acyclic systems of representatives and acyclic colorings of digraphs, J. Graph Theory. 2008 Nov;59(3):177-89.


\bibitem{AHZ} R. Aharoni, R. Holzman, Z. Jiang, Rainbow fractional matchings,
arXiv:1805.09732v1.
\bibitem{AKZ} R. Aharoni, D. Kotlar, R. Ziv, Uniqueness of the extreme cases in theorems of Drisko and Erd\H{o}s-Ginzburg-Ziv, Eur. J. Combin. 67 (2018) 222-229.

\bibitem{AKS} M. Ajtai, J. Koml{\'o}s, E. Szemer{\'e}di,
A note on Ramsey numbers,
J. Combin. Theory, Ser. A 29 (1980) 354-360.

\bibitem{Al} N. Alon, Multicoloured matchings in hypergraphs, Moscow J. Combin. Number Theory 1 (2011) 3-10.
\bibitem{alwz} R. Alweiss, S. Lovett, K. Wu, J. Zhang, Improved bounds for the sunflower lemma, arXiv:1908.08483 (2019).

\bibitem{Ar} A. Aranda, Line Graphs of Triangle-Free Graphs, http://contacts.ucalgary.ca/ \\
info/math/files/info/unitis/publications/1-5916781/LineGraphsK3f.pdf
\bibitem{Ba} I. B\'ar\'any, A generalization of Carath\'eodory's theorem, Discrete Math. 40 (1982) 141-152.
\bibitem{BGS} J. Bar\'at, A. Gy\'arf\'as, G. S\'ark\"ozy, Rainbow matchings in bipartite multigraphs, Period. Math. Hung. 74 (2017) 108-111.
\bibitem{beineke} L. Beineke, Derived graphs and digraphs, Beitr\"ege zur Graphentheorie (1968) 17-33.
\bibitem{ChL} Chartrand, G. and Lesniak, L., Graphs \& Digraphs, Chapman \& Hall/CRC, 3rd edition, 1996.
\bibitem{drisko} A. A. Drisko, Transversals in row-latin rectangles, J. Combin. Theory, Ser. A 84 (1998) 181-195.

\bibitem{EGZ} P. Erd\H{o}s, A. Ginzburg, A. Ziv, Theorem in the additive number theory, Bull. Res. Council Israel 10F (1961) 41-43.

\bibitem{ER} P. Erd\H{o}s, R. Rado, Intersection theorems for systems of sets, J. of the London Math. Society, 1(1) (1960) 85–90.
\bibitem{He} S. Hedetniemi, Graphs of $(0,1)$-matrices, Recent trends in graph theory. Springer, Berlin, Heidelberg, (1971) 157-171.
\bibitem{kostochka} A. Kostochka, A bound of the cardinality of families not containing $\Delta$-systems, The
Mathematics of Paul Erd¨os II, Springer (1997) 229-235.
\bibitem{kotlarziv} D. Kotlar, R. Ziv, A matroidal generalization of results of Drisko and Chappell,
arXiv:1407.7321.
\bibitem{Shearer} J. B. Shearer, A note on the independence number of a triangle-free graph, Discrete Math. 46
(1983), 83-87. 

\end{thebibliography}
\end{document}